\newtheorem{theorem}{Theorem}[section]
\newtheorem{proposition}[theorem]{Proposition}
\newtheorem{lemma}[theorem]{Lemma}
\theoremstyle{definition}
\newcommand{\bel}{\begin{equation} \label}
\newcommand{\ee}{\end{equation}}
\newcommand{\pd}{\partial}
\newcommand{\R}{{\mathbb R}}
\def\epsilon{\varepsilon}
\def\phi {\varphi}
\def\beq{\begin{equation}}
\def\eeq{\end{equation}}
\renewcommand{\leq}{\leqslant}
\renewcommand{\geq}{\geqslant}
\newcommand{\bea}{\begin{eqnarray}}
\newcommand{\eea}{\end{eqnarray}}
\newcommand{\beas}{\begin{eqnarray*}}
\newcommand{\eeas}{\end{eqnarray*}}
\providecommand{\abs}[1]{\left\lvert#1\right\rvert}
\providecommand{\norm}[1]{\left\lVert#1\right\rVert}
\title[Stable recovery of non-compactly supported  electromagnetic potentials]{Stable recovery  of  non-compactly supported  electromagnetic potentials in unbounded domain}
\author{Yavar Kian}
\address{Aix Marseille Univ, Universit\'e de Toulon, CNRS, CPT, Marseille, France.}
\email{yavar.kian@univ-amu.fr}
\author{Yosra Soussi}
\address{Universit\'e de Tunis El Manar, Ecole d'in\'enieur de Tunis, ENIT-LMSIN, B.P. 37, 1002 Tunis, Tunisia\\
Aix Marseille Univ, Universit\'e de Toulon, CNRS, CPT, Marseille, France.}
\email{yosra.soussi@enit.utm.tn}
\begin{document}
\begin{abstract} We consider the inverse problem of determining an electromagnetic potential appearing in an infinite cylindrical domain from boundary measurements. More precisely, we prove the stable recovery of some general class of magnetic field and electric potential from boundary measurements. Assuming some knowledge of the unknown coefficients close to the boundary, we obtain also some results of stable recovery with measurements restricted to some portion of the boundary. Our approach combines construction of complex geometric optics solutions and Carleman estimates suitably designed for our stability results stated in an unbounded domain.

\medskip
\noindent
{\bf Keywords :} Inverse problems, elliptic equations, electromagnetic potential, Carleman estimate, unbounded domain, closed  waveguide,  partial data.

\medskip
\noindent
{\bf Mathematics subject classification 2010 :} 35R30, 35J15.
\end{abstract}
\maketitle


\section{Introduction}
\label{sec-intro}
\setcounter{equation}{0}
\subsection{Statement of the problem}
Let  $\Omega$ be an open set of $\R^3$ corresponding  to a closed waveguide. More precisely,  we assume that there exists $\omega$  a $\mathcal C^3$  bounded, open and simply connected set of $\mathbb{R}^2$ such that $\Omega=\omega\times\R$.
 For $A\in W^{1,\infty}(\Omega)^3$, we define the magnetic Laplacian $\Delta_A$ given by
$$\Delta_A=\Delta+2iA\cdot\nabla+i\textrm{div}(A)-|A|^2.$$
For $q \in L^\infty(\Omega)$ such that $0$ is not in the spectrum of of the operator $-\Delta_A+ q$ acting on $L^2(\Omega)$ with Dirichlet boundary condition,  we can introduce the  boundary value problem
\bel{eq1}
\left\{ 
\begin{array}{rcll} 
(-\Delta_A + q) u & = & 0, & \mbox{in}\ \Omega,\\ 
u & = & f ,& \mbox{on}\ \Gamma : = \pd \Omega.
\end{array}
\right.
\ee
Recall that $\Gamma=  \pd \omega\times\R $ and that the outward unit normal vector $\nu$  to $\Gamma$ takes the form
$$ \nu(x',x_3)=(\nu'(x'),0),\ x=(x',x_3)\in\Gamma, $$
where $\nu'$ is the outward unit normal vector of $\pd \omega$. In the present paper we consider the simultaneous stable recovery of the magnetic field associated with $A$ and the electric potential $q$ from the full and  partial knowledge of the  Dirichlet-to-Neumann (DN in short) map
\begin{equation}\label{a1}
\begin{array}{lll}
\Lambda_{A,q} : & H^{\frac{3}{2}} (\partial \Omega) &\longrightarrow \quad H^{\frac{1}{2}} (\partial \Omega)\\
&\quad f &\longmapsto {(\pd_\nu+iA\cdot\nu) u}_{|\partial\Omega} ,
\end{array}
\end{equation}
where $\pd_\nu $ is the normal derivative. Let $\Gamma_0 \subset \partial \omega $ be an arbitrary open set. The restriction $\Lambda'_{A,q}$ of $\Lambda_{A,q}$ on $\Gamma_0 \times \R$ is defined by
\begin{equation}\label{DNp}
\begin{array}{lll}
\Lambda'_{A,q} : & H^{\frac{3}{2}} (\partial \Omega) &\longrightarrow \quad H^{\frac{1}{2}} (\Gamma_0 \times \R)\\
&\quad f &\longmapsto {(\pd_\nu+iA\cdot\nu) u}_{ \vert{\Gamma_0 \times \R}}.
\end{array}
\end{equation}

\subsection{Motivations}
The problem addressed in this article  is connected with the   electrical impedance tomography (EIT in short) method as well as its applications in different scientific areas (e.g. medical imaging, geophysical prospection...). We refer to \cite{Uh} for a review of this problem. Our formulation of this problem in an unbounded closed waveguide can be associated with problems of transmission to long distance or transmission through  structures having important ratio length-to-diameter (e.g. nanostructures). The main objective of our study  is to determine in a stable way  an  electromagnetic impurity  perturbing the guided propagation (see for instance in \cite{CL,KBF}). 
 
\subsection{Known results}
There have been many works so far devoted to the study of the Calder\'on problem initially stated in \cite{Ca}. The first positive answer to this problem can be found in \cite{SU} where the authors used an approach based on the construction of complex geometric optics (CGO in short) solutions. We refer also to \cite{Ch,Ha,Ki6} for some alternative constructions of CGO solutions. Motivated by this result, many authors investigated several aspects of this problem.  One of the first results devoted to the recovery of electromagnetic potentials can be found in \cite{Suu}. Here the authors stated a uniqueness result under a smallness assumption of the associated magnetic field. This smallness assumption has been removed by \cite{NSU} for smooth coefficients and improved in terms of regularity by \cite{KU}. Since then, in \cite{T}, the author considered  magnetic potentials lying in $\mathcal C^1$, \cite{Sa1} treated the case of    magnetic potentials lying in a Dini class and \cite{KU} considered this problem with bounded electromagnetic potentials. One of the first results of stability for this problem can be found in \cite{Tz}
and, without being exhaustive, we refer to \cite{B,CP,Pot2,Pot1} for some recent improvements of such results and to the works of \cite{Al,CDR2,CKS4,KU1} for the stable recovery of several classes of coefficients appearing in an elliptic equation.

All the above mentioned results are stated in a bounded domain. There have been only few works devoted to the recovery of coefficients for elliptic equations in an unbounded domain. Among these results several works have been devoted to the  recovery of coefficients of an elliptic equation in a Slab (see e.g. \cite{CM,KLU,LU}) and we refer to the works \cite{CKS2,CKS3}  for the recovery of periodic coefficients in an infinite waveguide. As far as we know, the first results dealing with the unique recovery of general class of non-compactly supported  and non-periodic coefficients, appearing in an unbounded cylindrical domain, can be found in \cite{Ki4,Ki5}. More recently, in \cite{So} the author proved the stable recovery of an electric potential similar to the class of coefficients under consideration in \cite{Ki4}. To the best of our knowledge, the results of \cite{So} correspond to the first proof of stable recovery of coefficients similar to those considered by \cite{Ki4} from full and partial data.
We mention also the works \cite{BKS,BKS1,CKS, CS, KKS,Ki1,KPS1, KPS2} dealing with similar problems in a different class of PDEs.

\subsection{Statement of the main results}

 Taking into account the well known obstruction to the recovery of the electromagnetic potentials (see e.g. \cite[Section 1.4]{Ki5}), we study the stable recovery of the magnetic field and the electric potential appearing in \eqref{eq1}. More precisely, for $A=(a_1,a_2,a_3)$,  we consider the recovery of the magnetic field corresponding to the 2-form valued distribution $dA$ defined by
$$dA:=\underset{1\leq j<k\leq 3}{\sum} (\partial_{x_j}a_k-\partial_{x_k}a_j)dx_j\wedge dx_k$$
and the electric potential $q$. In our first result, we prove the stable recovery of the magnetic field.

\begin{theorem}
\label{t1} 
For $j=1,2$, let $A_j\in W^{2,1}(\Omega)^3\cap W^{2,\infty}(\Omega)^3$ satisfy the condition 
\bel{t1a} \partial_x^\alpha A_1(x)=\partial_x^\alpha A_2(x),\quad x\in\partial\Omega,\ \alpha\in\mathbb N^3,\ |\alpha|\leq 1\ee
and assume that $0$ is not in the spectrum of the operator $-\Delta_{A_j}+q_j$ acting in $L^2(\Omega)$ with Dirichlet boundary condition.
Assume also that there exist $M>0$, $s\in(0,1/2)$ and $f\in L^{\frac{5}{3}}(\R_+;\R_+)$ a  decreasing function such that the following conditions are fulfilled
\bel{t1b} \begin{aligned}\int_{\Omega} \left\langle x_3\right\rangle^s|A_1(x)-A_2(x)|dx+\norm{r\mapsto r^{\frac{3}{5}}f(r)}_{L^{\frac{5}{3}}(1,+\infty)}&\leq M,\\
\sum_{j=1}^2[\norm{A_j}_{W^{2,\infty}(\Omega)^3}+\norm{A_j}_{H^2(\Omega)^3}+\norm{q_j}_{L^\infty(\Omega)}] &\leq M,   \end{aligned}\ee
\bel{t1c} | A_1(x)- A_2(x)|\leq f(|x|),\quad x\in\Omega.    \ee
Then there exist $C>0$ depending only on $\Omega$, $s$, $f$ and $M$ and $s_1\in(0,1)$ depending only on $s$ such that the following estimate
\bel{t1d}\norm{dA_1-dA_2}_{L^2(\Omega)}\leq C\ln\left(3+\norm{\Lambda_{A_1,q_1}-\Lambda_{A_2,q_2}}_{\mathcal B(H^{\frac{3}{2}}(\partial\Omega),L^2(\partial\Omega))}^{-1}\right)^{-s_1}\ee
holds true.
\end{theorem}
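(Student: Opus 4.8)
The plan is to adapt the classical complex geometric optics (CGO) and Carleman estimate machinery for the magnetic Schr\"odinger equation to the unbounded cylinder $\Omega=\omega\times\R$, and to make every step quantitative in order to reach the logarithmic rate \eqref{t1d}. The starting point is the bilinear identity obtained by combining \eqref{eq1} for the coefficients $(A_1,q_1)$ with the formally adjoint problem for $(A_2,q_2)$. If $u_1$ solves $(-\Delta_{A_1}+q_1)u_1=0$ with $u_1|_\Gamma=f$ and $u_2$ solves $(-\Delta_{\overline{A_2}}+\overline{q_2})u_2=0$, then integration by parts yields
\[
\int_\Omega\Big[2i(A_1-A_2)\cdot\nabla u_1\,\overline{u_2}+\big(i\,\mathrm{div}(A_1-A_2)+|A_1|^2-|A_2|^2+q_1-q_2\big)u_1\overline{u_2}\Big]\,dx=\langle(\Lambda_{A_1,q_1}-\Lambda_{A_2,q_2})f,\overline{u_2}\rangle.
\]
Here the boundary normalization \eqref{t1a}, namely $\partial_x^\alpha A_1=\partial_x^\alpha A_2$ on $\Gamma$ for $|\alpha|\leq 1$, is precisely what removes the boundary contributions carrying $A$ and fixes the gauge, so that the left-hand side sees the genuine, gauge invariant object $dA_1-dA_2$.

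Next I would construct CGO solutions $u_j=e^{\zeta_j\cdot x}(a_j+r_j)$ tailored to the waveguide geometry. Since $\Omega$ is invariant under translation in $x_3$ while its cross-section $\omega$ is bounded, the natural device is a partial Fourier transform in $x_3$ combined with a Carleman weight that is \emph{linear in a transverse variable}; such a weight stays bounded on $\Omega$, which is what makes the construction viable in an unbounded domain (in the spirit of \cite{Ki4,Ki5}). The complex phases $\zeta_j=\zeta_j(\rho,\xi)\in\C^3$ are chosen with $\zeta_j\cdot\zeta_j=0$ and $\zeta_1+\overline{\zeta_2}=i\xi$, so that to leading order $u_1\overline{u_2}\approx e^{i\xi\cdot x}a_1\overline{a_2}$ and the bilinear identity probes $\widehat{dA_1-dA_2}(\xi)$. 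The amplitudes $a_j$ are taken to solve the transport ($\bar\partial$-type) equation that eliminates the first order term $2i(A_1-A_2)\cdot\nabla$, while the remainders $r_j$ are controlled through a Carleman estimate with a gain of a power of $\rho$, giving $\norm{r_j}=O(\rho^{-1})$ with constants depending only on $M$.

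Inserting these solutions into the identity and letting $\rho\to\infty$, the leading term reconstructs, after using gauge invariance, a component of the Fourier transform of the magnetic field, schematically
\[
\big|\,\xi\wedge\widehat{(A_1-A_2)}(\xi)\,\big|\lesssim e^{C\rho}\,\norm{\Lambda_{A_1,q_1}-\Lambda_{A_2,q_2}}+\frac{C(M)}{\rho}+\mathcal{E}(L),
\]
valid for $|\xi|$ up to a threshold growing with $\rho$. Three contributions must be estimated. The boundary pairing is bounded by $\norm{\Lambda_{A_1,q_1}-\Lambda_{A_2,q_2}}$ times the traces of the CGO solutions, which grow like $e^{C\rho}$; because $\Omega$ is unbounded this requires genuine integrability of those traces in $x_3$, obtained by truncating to $|x_3|\leq L$ and controlling the remaining tail $\mathcal{E}(L)$ by \eqref{t1b}--\eqref{t1c}: the pointwise bound $|A_1-A_2|\leq f(|x|)$ together with $f\in L^{5/3}$ and the weighted integral bounds make $\int_{|x_3|>L}|A_1-A_2|\,|u_1\overline{u_2}|\,dx$ small, uniformly over the relevant range of $\rho$. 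The remainder terms contribute $O(\rho^{-1})$ with constant depending on $M$ through \eqref{t1b}.

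Finally I would recover $\norm{dA_1-dA_2}_{L^2(\Omega)}$ by Plancherel, splitting $\int_{\rt}=\int_{|\xi|<R}+\int_{|\xi|>R}$. The low frequencies are estimated by the displayed inequality, while the high frequencies are absorbed using the a priori $H^2$ bound in \eqref{t1b}, which forces polynomial decay of $\widehat{dA_1-dA_2}$. Balancing the truncation length $L$, the CGO parameter $\rho$ and the cutoff $R$ against $\norm{\Lambda_{A_1,q_1}-\Lambda_{A_2,q_2}}$---the exponential factor $e^{C\rho}$ forcing $\rho\sim c\ln(\norm{\cdots}^{-1})$---produces the logarithmic rate \eqref{t1d} with an exponent $s_1\in(0,1)$ dictated by the decay class, the weight $s$, and the interpolation between the tail and CGO estimates (this is where the precise exponents $5/3$ and $r^{3/5}$ in \eqref{t1b} enter). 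The main obstacle is exactly this unbounded direction: one must build CGO solutions with enough decay and integrability in $x_3$ for all integrals and boundary pairings to converge, reconcile the boundedness of the Carleman weight with the isolation of a prescribed Fourier frequency, and quantify the non-compactly-supported tails, so that the truncation error, the remainder decay and the high-frequency a priori bound can be optimized simultaneously to yield the stated stability exponent.
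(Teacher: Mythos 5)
Your proposal follows essentially the same route as the paper's proof: CGO solutions built with a Carleman weight linear in the bounded transverse variable $x'$ (in the spirit of \cite{Ki4,Ki5}), an $x_3$-truncation whose tail is controlled exactly by the decay hypotheses \eqref{t1b}--\eqref{t1c}, the integration-by-parts identity pairing the CGO solutions against $\Lambda_{A_1,q_1}-\Lambda_{A_2,q_2}$, recovery of the Fourier transform of $dA_1-dA_2$ followed by a Plancherel low/high-frequency splitting using the a priori $W^{1,1}$ and $H^2$ bounds, and a final balancing $\rho\sim c\ln\left(\|\Lambda_{A_1,q_1}-\Lambda_{A_2,q_2}\|^{-1}\right)$ that yields the logarithmic rate. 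The only substantive points the paper makes precise that you leave implicit are the degeneracy of the admissible phases as $\xi_3\to 0$ (the factor $1+|\xi'|/|\xi_3|$, handled there by excising the slab $\{|\xi_3|<R^{-4}\}$ and using the $L^\infty$ bound on the Fourier transform coming from $W^{1,1}$) and the removal of the amplitude factor $e^{\Phi_1+\overline{\Phi_2}}$ via \cite[Lemma 4.1]{Ki5}, but neither changes the architecture of the argument.
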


Assuming that the divergence of the magnetic potential under consideration is known, we prove also the stable recovery of the electric potential.

\begin{theorem}\label{t2}
Let the condition of Theorem \ref{t1} and conditions \eqref{t1a}-\eqref{t1c} be fulfilled. Assume also that 
\bel{t2a} \textrm{div}(A_1)=\textrm{div}(A_2).\ee
Moreover, let $q_j\in H^1(\Omega)\cap L^2(\Omega)$, $j=1,2$, satisfy the following condition
\bel{t2aa} \int_{\Omega} \left\langle x_3\right\rangle^s|q_1(x)-q_2(x)|dx+ \norm{q_1-q_2}_{H^1(\Omega)^3}\leq M,\ee
with $s\in(0,1)$.
Then there exists a constant $s_2>0$ depending only on $s$ such that the following estimate
\bel{t2b}\norm{q_1-q_2}_{L^2(\Omega)}\leq C\ln\left[\ln\left(e^3+\norm{\Lambda_{A_1,q_1}-\Lambda_{A_2,q_2}}_{\mathcal B(H^{\frac{3}{2}}(\partial\Omega),L^2(\partial\Omega))}^{-1}\right)\right]^{-s_2}\ee
holds true, with $C>0$ depending only on $\Omega$, $s$ and $M$.
\end{theorem}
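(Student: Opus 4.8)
The plan is to combine the Alessandrini-type integral identity for the magnetic Schr\"odinger operator with complex geometric optics (CGO) solutions adapted to the cylinder $\Omega=\omega\times\R$, and to feed the magnetic-field stability of Theorem \ref{t1} into the reconstruction of $\tilde q:=q_1-q_2$ (write also $\tilde A:=A_1-A_2$). First I would derive an identity of the form
\[
\int_\Omega \Bigl[\, 2i\,\tilde A\cdot\nabla u_1\,\overline{u_2}+i\,\mathrm{div}(\tilde A)\,u_1\overline{u_2}-(|A_1|^2-|A_2|^2)u_1\overline{u_2}+\tilde q\, u_1\overline{u_2}\,\Bigr]\,dx=\bigl\langle(\Lambda_{A_1,q_1}-\Lambda_{A_2,q_2})f,\ \overline{u_2}\bigr\rangle,
\]
valid for $u_1$ solving $(-\Delta_{A_1}+q_1)u_1=0$ with $u_1|_\Gamma=f$ and $u_2$ an associated solution for $(A_2,q_2)$. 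By hypothesis \eqref{t2a} the divergence term drops out at once.

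Next I would insert CGO solutions $u_j=e^{\rho_j\cdot x}(a_j+r_j)$ with $\rho_j\in\C^3$, $\rho_j\cdot\rho_j=0$, $|\rho_j|\sim\tau$, and $\rho_1+\rho_2$ chosen so that $e^{(\rho_1+\rho_2)\cdot x}$ isolates a prescribed frequency $\xi$ of $\tilde q$. Since $\Omega$ is unbounded in $x_3$, these are built through a partial Fourier transform in $x_3$ coupled with a Carleman estimate with linear weight in the cross-section, the weighted hypotheses \eqref{t1c} and \eqref{t2aa} (the factors $\langle x_3\rangle^s$) being exactly what guarantees convergence of the integrals and quantitative, \emph{uniform} control of the remainders $r_j$ in the dual variable. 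The amplitudes $a_j$ are chosen to solve a $\overline\partial$-type transport equation cancelling the leading first-order magnetic term. The residual first-order and quadratic magnetic contributions are then bounded by $\tilde A$ in a suitable norm, and here I would invoke \eqref{t1a} and Theorem \ref{t1}: because $d\tilde A$ is controlled in $L^2$, $\mathrm{div}(\tilde A)=0$, and $\tilde A$ vanishes on $\Gamma$, a div--curl identity together with the cross-sectional Poincar\'e inequality (valid since $\omega$ is bounded) gives $\norm{\tilde A}_{H^1(\Omega)}\lesssim\norm{d\tilde A}_{L^2(\Omega)}$, and interpolation against the a priori bounds in \eqref{t1b} upgrades this to the norm needed in the identity, all with the single-logarithmic rate \eqref{t1d}.

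With the magnetic part absorbed, the identity reduces asymptotically to
\[
\widehat{\tilde q}(\xi)=\bigl(\text{boundary term}\bigr)+\bigl(\text{CGO remainder}\bigr)+\bigl(\text{magnetic correction}\bigr),
\]
the boundary term being $O\!\left(e^{C\tau}\norm{\Lambda_{A_1,q_1}-\Lambda_{A_2,q_2}}\right)$, the remainder controlled through the Carleman estimate, and the correction controlled by Theorem \ref{t1}. I would then recover $\norm{\tilde q}_{L^2(\Omega)}$ by splitting the frequency integral at a radius $R$: on $\{|\xi|\le R\}$ via the pointwise bound on $\widehat{\tilde q}$, and on $\{|\xi|>R\}$ via the $H^1$ regularity of $\tilde q$ from \eqref{t2aa}, whence the tail is $O(R^{-1}M)$; optimizing the free parameters $\tau$ and $R$ against the boundary error yields the final bound.

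The main obstacle, and the origin of the double logarithm in \eqref{t2b}, is the two-step structure combined with the restricted recoverable frequency range in the non-compact direction. The reconstruction of $\tilde q$ relies on that of $\tilde A$ as an intermediate ingredient, and the latter is only known with the single-logarithmic rate \eqref{t1d}; propagating this through the identity means that the effective error driving the $\tilde q$-estimate is not $\norm{\Lambda_{A_1,q_1}-\Lambda_{A_2,q_2}}$ itself but a quantity of size $[\log(\cdots)]^{-s_1}$. Applying a logarithmic stability estimate to an error that is already logarithmically small converts one logarithm into two, heuristically $[\log(\,[\log(1/\eps)]^{s_1}\,)]^{-s}\sim[\log\log(1/\eps)]^{-s_2}$; concretely, the factor $e^{C\tau}$ forces $\tau\lesssim\log\norm{\Lambda_{A_1,q_1}-\Lambda_{A_2,q_2}}^{-1}$, and the partial-Fourier construction in the unbounded direction lets $\tau$ control only a slowly growing band of frequencies, so that $R$ is limited to order $\log\log(1/\eps)$. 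Making this rigorous---obtaining CGO remainder and Carleman estimates uniform in the $x_3$-Fourier variable, justifying the interchange of that transform with the CGO construction, and tracking precisely how $\tau$ bounds the recoverable frequencies---is the technically delicate part, and the weighted hypotheses \eqref{t1b}, \eqref{t1c} and \eqref{t2aa} are exactly what legitimize these uniform estimates.
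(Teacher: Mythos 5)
Your proposal is correct and follows the same overall architecture as the paper's proof: the same integral identity with the CGO solutions of Section 2, a first step showing that the full difference $A=A_1-A_2$ (not just $dA$) is logarithmically small in $L^2(\Omega)$, an upgrade of that bound to $L^\infty$ by interpolation against the a priori bounds (the paper's Lemma \ref{l3}), insertion into the identity so that $\mathcal F(q_1-q_2)$ is estimated up to a magnetic correction that is itself only logarithmically small, and a final frequency splitting (low frequencies by the pointwise bound, high frequencies by the $H^1$ bound in \eqref{t2aa}) whose optimization produces the double logarithm of \eqref{t2b} --- exactly your ``log of a log'' mechanism. The one genuinely different ingredient is how you control $\norm{A}_{L^2(\Omega)}$: you extend $A$ by zero (legitimate thanks to \eqref{t1a}), use the div--curl identity $\norm{\nabla A}_{L^2(\R^3)}^2=\norm{\mathrm{div}(A)}_{L^2(\R^3)}^2+\norm{dA}_{L^2(\R^3)}^2$ (up to normalization) together with \eqref{t2a} and the cross-sectional Poincar\'e inequality, and then invoke Theorem \ref{t1} as a black box; the paper instead returns to the intermediate Fourier estimate \eqref{t1dd}, observes that \eqref{t2a} forces $\mathcal F(A)(\xi)\cdot\xi=0$ so that \eqref{t1dd} controls all components of $\mathcal F(A)(\xi)$, and reruns the Theorem \ref{t1} optimization to obtain \eqref{t2d}. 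Your route is slightly cleaner in that it reuses the stated theorem rather than its proof, while the paper's is self-contained at the Fourier level; both give the same single-log rate for $\norm{A}_{L^2(\Omega)}$, so nothing is lost. Two caveats: your description of the CGO construction (partial Fourier transform in $x_3$, Carleman weight only in the cross-section) is not what the paper actually does --- it uses a cutoff $\psi(\rho^{-1/4}x_3)$ and a Carleman estimate in $H^{-2}_\rho(\R^3)$ --- but since Theorem \ref{t4} can be quoted as given, this does not affect your argument; and your secondary explanation of the double logarithm (a restriction of $R$ to order $\log\log$ coming from the unbounded direction) is not the paper's mechanism: the double log arises solely because the optimization of Lemma \ref{min} is applied with the already log-small magnetic correction playing the role of the data error, which is precisely your primary explanation.
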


Now, we give two partial data results with restriction of the measurements to an arbitrary subset of the boundary. The statement of these results requires some definitions and assumptions that we need to recall first. Let $\mathcal{W}_0 \subset \omega $ be an arbitrary neighborhood of the boundary $\partial \omega$ such that $\partial \mathcal{W}_0 = \partial \omega \cup \Gamma^{\sharp} $ with $\partial \omega \cap \Gamma^{\sharp} = \varnothing $. We assume that $\Gamma^\sharp$ is $\mathcal{C}^2$. Let $\Gamma_0 \subset \partial \omega \subset \partial \mathcal{W}_0$ be an arbitrary (not empty) open set of $\partial \omega$ and let $\mathcal{O}_0 = \mathcal{W}_0 \times \R$.
For a given $M>0$, we introduce the admissible sets of coefficients
$$\mathcal{A}(M, A_0 ,\mathcal{O}_0) = \lbrace A \in  \mathcal{C}^2(\overline{\Omega}, \R^3); \, \Vert A \Vert_{\mathcal{C}^2(\overline{\Omega})} \leqslant M \text{ and } A = A_0 \text{ in } \mathcal{O}_0  \rbrace ,$$
$$\mathcal{Q}(M, q_0 ,\mathcal{O}_0) = \lbrace q \in  L^{\infty}(\overline{\Omega}, \R^3); \, \Vert q \Vert_{L^{\infty}(\overline{\Omega})} \leqslant M \text{ and } q = q_0 \text{ in } \mathcal{O}_0  \rbrace .$$
\begin{theorem}
\label{st3} 
For $j=1,2$, let $q_j\in \mathcal{Q}(M, q_0 ,\mathcal{O}_0)$ and let $A_j\in \mathcal{A}(M, A_0 ,\mathcal{O}_0)$ satisfy the conditions of Theorem \ref{t1}. 
Then there exist $C>0$ depending only on $\Omega$, $s$ and $M$ and $s_1$ depending only on $s$ such that the following estimate
\bel{res3}\norm{dA_1-dA_2}_{L^2(\Omega)}\leq C\ln\left(3+\norm{\Lambda'_{A_1,q_1}-\Lambda'_{A_2,q_2}}_{\mathcal{B}(H^{\frac{3}{2}} (\partial \Omega) , H^{\frac{1}{2}} (\Gamma_0 \times R ))}^{-1}\right)^{-s_1}\ee
holds true.
\end{theorem}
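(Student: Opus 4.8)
The plan is to adapt the full data argument behind Theorem \ref{t1} to the partial data setting, the genuinely new ingredient being a Carleman estimate carrying a boundary term that allows one to exploit measurements restricted to $\Gamma_0\times\R$. The decisive structural fact is that, since $A_j=A_0$ and $q_j=q_0$ on the collar $\mathcal{O}_0=\mathcal{W}_0\times\R$, the differences $A_1-A_2$ and $q_1-q_2$ vanish on a full neighborhood of the lateral boundary $\partial\omega\times\R$; hence all the interaction between the unknown coefficients is confined to $(\omega\setminus\overline{\mathcal{W}_0})\times\R$, away from the inaccessible part of the boundary. Writing $u_1$ for a solution of \eqref{eq1} with $(A_1,q_1)$ and $u_2$ for a solution of the (adjoint) equation associated with $(A_2,q_2)$, an Alessandrini-type identity yields, schematically,
\begin{align*}
&\int_\Omega\Big[2i(A_1-A_2)\cdot\nabla u_1\,\overline{u_2}+\big(i\,\mathrm{div}(A_1-A_2)+|A_1|^2-|A_2|^2+q_1-q_2\big)u_1\overline{u_2}\Big]\,dx\\
&\qquad=\langle(\Lambda_{A_1,q_1}-\Lambda_{A_2,q_2})f,g\rangle_{\Gamma}.
\end{align*}
Since the left-hand side integrand is supported in $(\omega\setminus\overline{\mathcal{W}_0})\times\R$, it is insensitive to the behaviour of $u_1,u_2$ near $\partial\omega\times\R$, which is what opens the door to partial data.

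To replace the full boundary pairing by the restricted operator $\Lambda'_{A_1,q_1}-\Lambda'_{A_2,q_2}$, I would use a Carleman estimate with boundary term for the conjugated operator $e^{\varphi/h}(-\Delta_{A_j}+q_j)e^{-\varphi/h}$, with a limiting linear weight $\varphi(x)=\alpha\cdot x$ adapted to the cylinder. Splitting $\Gamma$ into $\Gamma_\pm=\{x\in\Gamma:\pm\partial_\nu\varphi(x)\geq0\}$, such an estimate controls, for smooth $v$ vanishing on $\Gamma$,
\[
h\int_{\Gamma_-}|\partial_\nu\varphi|\,|\partial_\nu v|^2\,d\sigma+\|v\|^2\lesssim\|e^{\varphi/h}(-\Delta_{A_j}+q_j)e^{-\varphi/h}v\|^2+h\int_{\Gamma_+}|\partial_\nu\varphi|\,|\partial_\nu v|^2\,d\sigma.
\]
Choosing $\alpha$ so that the inaccessible boundary $(\partial\omega\setminus\Gamma_0)\times\R$ is contained in $\Gamma_+$ lets me discard the contribution of the DN map there and retain only the pairing over $\Gamma_0\times\R$; the collar $\mathcal{O}_0$, with interface $\Gamma^\sharp$, provides the geometric flexibility needed to place the weight and to extend the coefficients as required.

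Next I would insert complex geometric optics solutions exactly as in the proof of Theorem \ref{t1}. Taking the partial Fourier transform in $x_3$ turns the problem into a family of two-dimensional problems with spectral parameter, and one constructs $u_j=e^{\rho\cdot x}(a_j+r_j)$ with $\rho=\rho(\zeta,h)$, $\rho\cdot\rho=0$, $a_j$ a suitable amplitude and $r_j$ a remainder controlled uniformly in the Fourier variable $\zeta$ via the above Carleman estimate. The decay hypotheses \eqref{t1b}--\eqref{t1c} are what make the longitudinal integrals converge after the insertion of the exponentially growing amplitudes, and letting $h\to0$ produces in the limit the Fourier transform of a component of $dA_1-dA_2$ tested against the chosen phases.

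Finally, the passage from this Fourier-side information to \eqref{res3} is carried out by a low/high frequency splitting: for $|\xi|\leq R$ the transform of $dA_1-dA_2$ is bounded by $\|\Lambda'_{A_1,q_1}-\Lambda'_{A_2,q_2}\|$ times a factor growing with $R$ coming from the exponential weights, while for $|\xi|>R$ one uses the a priori bound $M$ together with the decay encoded in $f$; optimising over $R$ and the Carleman parameter $h$ yields the single logarithmic modulus with exponent $s_1$ depending only on $s$, inherited from the full data analysis. The main obstacle is the construction of the Carleman estimate with boundary term in the unbounded cylindrical geometry, uniformly in $\zeta$ and compatibly with the partial Fourier transform in $x_3$, together with the requirement that the inaccessible boundary lie on the favourable side $\Gamma_+$ for every phase used in the CGO family; the quantitative decay of $f$ must moreover be strong enough to keep all the longitudinal integrals convergent throughout.
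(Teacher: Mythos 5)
There is a genuine gap at the central step of your plan. You propose to pass from full to partial data via a Bukhgeim--Uhlmann-type Carleman estimate with a \emph{linear} limiting weight $\varphi(x)=\alpha\cdot x$, splitting the boundary into $\Gamma_\pm=\{x\in\Gamma:\ \pm\partial_\nu\varphi(x)\geq 0\}$ and ``choosing $\alpha$ so that the inaccessible boundary $(\partial\omega\setminus\Gamma_0)\times\R$ is contained in $\Gamma_+$''. This choice is impossible in the setting of Theorem \ref{st3}. With a linear weight the partition of the lateral boundary is dictated by the geometry alone: $\Gamma_\pm=\{x'\in\partial\omega:\ \pm\alpha'\cdot\nu'(x')\geq 0\}\times\R$, and each of these sets is a fixed ``half'' of $\partial\omega\times\R$ determined by $\alpha$. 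Here $\Gamma_0$ is an \emph{arbitrary}, possibly very small, open subset of $\partial\omega$, so its complement $\partial\omega\setminus\Gamma_0$ is in general almost all of $\partial\omega$ and cannot lie inside the discardable side $\Gamma_+$ for any direction $\alpha$ (nor for the whole family of directions $\theta$ needed in the CGO construction). This is the well-known limitation of the linear-weight partial data method: it requires measurements on roughly half of the boundary, not on an arbitrary open set. You flag this requirement yourself as ``the main obstacle'', but it is not an obstacle one can overcome along that route; it is precisely why the theorem assumes $A_j=A_0$, $q_j=q_0$ on the collar $\mathcal{O}_0$. Note also that in your Alessandrini identity the right-hand side is still the pairing of the \emph{full} DN maps over all of $\Gamma$; the support property of the integrand on the left does not, by itself, let you replace $\Lambda_{A_j,q_j}$ by $\Lambda'_{A_j,q_j}$.

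The paper's proof exploits the collar assumption in an essentially different way, through quantitative unique continuation rather than weighted discarding of boundary terms. One solves the auxiliary problem \eqref{41} with Dirichlet datum $h=u_2$, sets $u=w-u_2$, which has zero trace and satisfies an equation whose source $2iA\cdot\nabla u_2+Vu_2$ is supported away from $\partial\Omega$ (since $A=0$, $q=0$ on $\mathcal{O}_0$). Cutting off with $\Theta$ and integrating against the CGO $u_1$ yields the identity \eqref{45}, in which the only term not directly expressible through $\mathcal F(A)$ is the commutator term $P_1(x',D)u$, supported in $\mathcal{O}_2\setminus\mathcal{O}_3$. That term is controlled by the weak unique continuation estimate \eqref{eq12} of Lemma \ref{UCP}: $\Vert u\Vert_{H^1(\mathcal{O}_2\setminus\mathcal{O}_3)}$ is bounded by $e^{-\lambda\alpha_1}\Vert u\Vert_{H^2(\Omega)}$ plus $e^{\lambda\alpha_2}$ times the Neumann trace on $\Gamma_0\times\R$ (which equals $(\Lambda'_{A_1,q_1}-\Lambda'_{A_2,q_2})h$) and the source on $\mathcal{O}_0$ (which vanishes). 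The proof of that lemma rests on a Carleman estimate (Appendix B) whose weight $\varphi=e^{\beta\psi}$ is built from the function $\psi_0$ of Lemma \ref{lempsi}: $\psi_0$ vanishes on the interior interface $\Gamma^\sharp$, is positive inside $\mathcal{W}_0$, and has nonpositive normal derivative on $\partial\omega\setminus\Gamma_0$. This Fursikov--Imanuvilov-type construction is what accommodates an \emph{arbitrary} observation set $\Gamma_0$, and it is the ingredient your sketch is missing. With it in hand, one takes $\lambda=\tau\rho$ with $\tau$ large and repeats the frequency-splitting and optimization of Theorem \ref{t1} to obtain \eqref{res3}; your concluding low/high frequency argument is consistent with that final step.
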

\begin{theorem}\label{st4}
For $j=1,2$, let $A_j\in \mathcal{A}(M, A_0 ,\mathcal{O}_0)$ and $q_j\in \mathcal{Q}(M, q_0 ,\mathcal{O}_0)$ satisfy the conditions of Theorem \ref{t2}. Then there exists a constant $s_2>0$ depending only on $s$ such that the following estimate
\bel{res4}\norm{q_1-q_2}_{L^2(\Omega)}\leq C\ln\left[\ln\left(e^3+\norm{\Lambda'_{A_1,q_1}-\Lambda'_{A_2,q_2}}_{\mathcal{B}(H^{\frac{3}{2}} (\partial \Omega) , H^{\frac{1}{2}} (\Gamma_0 \times R ))}^{-1}\right)\right]^{-s_2}\ee
holds true, with $C>0$ depending only on $\Omega$, $s$ and $M$.
\end{theorem}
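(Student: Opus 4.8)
The plan is to reproduce the strategy behind Theorem~\ref{t2}, in which the electric potential is extracted once the magnetic field is known, but to carry every step out within the partial-data framework already used for Theorem~\ref{st3}. Writing $\tilde A=A_1-A_2$ and $\tilde q=q_1-q_2$, the first point is that the membership $A_j\in\mathcal A(M,A_0,\mathcal O_0)$ and $q_j\in\mathcal Q(M,q_0,\mathcal O_0)$ forces $\tilde A=0$ and $\tilde q=0$ on the collar $\mathcal O_0=\mathcal W_0\times\R$, so both differences are supported in $\overline{\Omega\setminus\mathcal O_0}$, away from $\partial\Omega$. I would then test the usual integral identity for the magnetic Schr\"odinger operator against a solution $u_1$ of $(-\Delta_{A_1}+q_1)u_1=0$ and a solution $u_2$ of the adjoint equation for $(A_2,q_2)$, obtaining an expression of the form $\int_\Omega\big(2i\tilde A\cdot\nabla u_1+(i\,\mathrm{div}\,\tilde A+\abs{A_1}^2-\abs{A_2}^2+\tilde q)u_1\big)\overline{u_2}\,dx$ equal to a dual pairing of $\Lambda'_{A_1,q_1}-\Lambda'_{A_2,q_2}$ plus a boundary term carried by $(\partial\omega\setminus\Gamma_0)\times\R$. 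Thanks to \eqref{t2a} the divergence term drops.

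Next I would substitute complex geometric optics solutions $u_j=e^{\rho_j\cdot x}(a_j+r_j)$ with $\rho_j\cdot\rho_j=0$ and $\rho_1+\rho_2$ kept fixed, constructed compatibly with a partial Fourier transform in the axial variable $x_3$. A linear Carleman weight $\varphi(x)=\pm x\cdot\theta$, suitably convexified, supplies a Carleman estimate carrying a sign-definite boundary term; this simultaneously produces the remainders $r_j=O(\tau^{-1})$ and, combined with the known-coefficient collar $\mathcal O_0$ and the $\mathcal C^2$ interface $\Gamma^\sharp$, renders the unobserved boundary contribution on $(\partial\omega\setminus\Gamma_0)\times\R$ negligible, exactly as in Theorem~\ref{st3}. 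Because the magnetic field has already been controlled by that theorem, and because the gauge condition \eqref{t2a} together with the boundary matching \eqref{t1a} determines $\tilde A$ from $d\tilde A$ up to an error governed by $\norm{dA_1-dA_2}_{L^2(\Omega)}$, I would subtract the fully identified magnetic contribution $2i\tilde A\cdot\nabla u_1+(\abs{A_1}^2-\abs{A_2}^2)u_1$ from the identity and be left with an expression isolating the Fourier transform of $\tilde q$ on a range of frequencies growing with $\tau$.

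The concluding step converts this frequency-localized information into the bound \eqref{t2b}. After the $x_3$-Fourier transform, the weighted integrability hypotheses of \eqref{t1b} and \eqref{t2aa} guarantee that the transforms of $\tilde A$ and $\tilde q$ are H\"older continuous near the zero axial frequency, which is what makes the low-frequency integration meaningful in the cylinder of infinite length. I would split $\norm{\tilde q}_{L^2(\Omega)}^2$ into a low-frequency part, controlled through the CGO identity by the operator norm of $\Lambda'_{A_1,q_1}-\Lambda'_{A_2,q_2}$, and a high-frequency part, controlled by the a priori bound $\norm{\tilde q}_{H^1(\Omega)}\leq M$ of \eqref{t2aa}; optimizing the frequency cut-off against $\tau$ yields a single logarithmic rate for the recovery of $\tilde q$ under the assumption that $\tilde A$ is exactly known. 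Since in reality $\tilde A$ is itself known only through the logarithmic estimate \eqref{res3} furnished by Theorem~\ref{st3}, the two logarithmic losses compose and produce the nested logarithm of \eqref{t2b}.

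The principal difficulty I anticipate is making the complex geometric optics construction and the companion Carleman estimate uniform in the axial Fourier parameter while keeping the boundary term on the unobserved part $(\partial\omega\setminus\Gamma_0)\times\R$ sign-definite: the partial-data weight must restrict to an admissible weight on every cross-section $\omega$, and the passage through the Fourier transform in $x_3$ must not spoil the sign of that boundary term. Coupling this with the fact that $\tilde q$ is reconstructed only after the magnetic field, hence through a quantity already degraded by one logarithm, is the delicate point that forces the double-logarithmic modulus of continuity in \eqref{t2b}.
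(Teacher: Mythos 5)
Your high-level plan has the right two-stage shape, and your explanation of where the double logarithm comes from (the magnetic difference is known only with logarithmic accuracy via Theorem \ref{st3}, it enters the identity through $\exp\left(\Phi_1+\overline{\Phi_2}\right)-1$, and a second logarithmic optimization in $\rho$ is then performed, as in Theorem \ref{t2} and \cite[Theorem 1.2]{So}) agrees with the paper. But the mechanism you propose for dealing with the unobserved part of the boundary is not the paper's, and it would genuinely fail here. You want a Carleman estimate for the linear weight $\pm\theta\cdot x'$ whose sign-definite boundary term renders the contribution of $(\partial\omega\setminus\Gamma_0)\times\R$ negligible. With this weight the boundary term has a favorable sign only on the $\theta$-shadowed set $\{x\in\partial\Omega:\ \pm\theta\cdot\nu\leq 0\}$ (this is exactly the structure of Proposition \ref{p1}). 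In this paper, however, $\Gamma_0$ is an \emph{arbitrary} open subset of $\partial\omega$, and recovering the Fourier transform of $q_1-q_2$ at all frequencies $\xi=(\xi',\xi_3)$ forces $\theta$ to range over essentially all of $\mathbb{S}^1$ (one needs $\theta\perp\xi'$); the shadowed set then sweeps the whole lateral boundary and cannot be kept inside $(\partial\omega\setminus\Gamma_0)\times\R$ for a fixed small $\Gamma_0$. So a Bukhgeim--Uhlmann-type sign argument cannot close the proof in this setting.

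What the paper does instead (proof of Theorem \ref{st3}, and \cite[Theorem 1.2]{So} for the electric potential) is the scheme of Ben Joud \cite{B}: solve the boundary value problem $(-\Delta_{A_1}+q_1)w=0$ with Dirichlet datum $h=u_2$, set $u=w-u_2\in H^2(\Omega)\cap H^1_0(\Omega)$, and localize with a cut-off $\Theta$ equal to $1$ near $\partial\omega$ and $0$ in the interior, so that the error in the CGO identity is a commutator term $P_1(x',D)u$ supported in the intermediate shell $\mathcal O_2\setminus\mathcal O_3$. The crux --- and the step entirely missing from your proposal --- is the quantitative weak unique continuation property of Lemma \ref{UCP}: since $A_1=A_2=A_0$ and $q_1=q_2=q_0$ on the collar $\mathcal O_0$, the function $u$ solves a homogeneous equation with known coefficients there, and a Carleman estimate with an Imanuvilov--Yamamoto-type weight $e^{\beta\psi}$ (with $\psi$ from Lemma \ref{lempsi}, adapted to the arbitrary $\Gamma_0$ and vanishing on the interior interface $\Gamma^\sharp$) yields $\norm{u}_{H^1(\mathcal O_2\setminus\mathcal O_3)}\leq C\left(e^{-\lambda\alpha_1}\norm{u}_{H^2(\Omega)}+e^{\lambda\alpha_2}\norm{\partial_\nu u}_{L^2(\Gamma_0\times\R)}\right)$, where $\partial_\nu u=(\Lambda'_{A_1,q_1}-\Lambda'_{A_2,q_2})h$ on $\Gamma_0\times\R$; the choice $\lambda=\tau\rho$ with $\tau$ large then balances the exponentials against the $e^{(D+1)\rho}$ growth of the CGO solutions. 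This is how the collar assumption is actually exploited --- not merely to make the differences vanish there, which is the only use you make of it. The remaining ingredients of your outline (vanishing divergence to pass from $dA$ to $A$, interpolation as in Lemma \ref{l3} to get an $L^\infty$ bound, high/low frequency splitting and optimization) do match Theorems \ref{t1}--\ref{t2}, so once your partial-data mechanism is replaced by the unique continuation argument above, the proof composes in the way you describe.
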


To the best of our knowledge Theorem \ref{t1} and \ref{t2} correspond to the first results of stable recovery of the magnetic field and the electric potential associated with non-compactly supported electromagnetic potentials from boundary measurements. Indeed, while the uniqueness of this problem can be found in \cite{Ki5}, the stability issue for this problem has not been treated so far. We mention that in contrast to bounded domains, in unbounded domains the transition from a uniqueness result  to a stability estimate requires some careful analysis and one has to deal with several difficulties strongly related to the lost of compactness of the closure of the domain. For instance, in order to obtain such stability estimates we need to impose the extra assumptions \eqref{t1b}-\eqref{t1c} and \eqref{t2aa} to the electromagnetic potentials under consideration. Roughly speaking conditions \eqref{t1b}-\eqref{t1c} and \eqref{t2aa} claim that the difference of the electromagnetic potentials under consideration admit some decay at infinity. It is not clear how one can get a stability result associated with the results of \cite{Ki5} without assuming such extra assumptions.

In the spirit of \cite{B}, in Theorem \ref{st3} we treat the stable recovery of electromagnetic potentials that are known close to the boundary from some suitable sub-boundary of $\partial\Omega$. Our approach requires both results of Theorem \ref{t1} and \ref{t2} and some extension of the arguments of \cite{B} to an unbounded cylindrical domain. This includes a weak unique continuation result, stated in Lemma \ref{UCP}, that we derive for unbounded cylindrical domains.  

In contrast to similar results stated in bounded domains (see e.g. \cite{Tz}), in Theorem \ref{t2} we assume the knowledge of the divergence of the magnetic potentials under consideration in order to prove the recovery of the electric potentials. This is related to the fact that, in contrast to bounded domains, it is not clear how one can exploit  the gauge invariance associated with our problem, introduced in \cite[Section 1.4]{Ki5}, for showing the stable recovery of the electric potential without assuming   the knowledge of the divergence of the magnetic potential. 

\subsection{Outlines}
This paper is organized as follows. In Section 2 we introduce some class of CGO solutions, suitably designed for our problem, that we build  by mean of  Carleman estimates. In Section 3, we complete the proof of the results with full boundary measurements stated in Theorem \ref{t1} and \ref{t2}. Section 4 will be devoted to the results stated in Theorem \ref{st3} using partial boundary measurements. Finally, in the appendix we prove several intermediate results including an interpolation result, a Carleman estimate and a weak unique continuation property.

\section{CGO solutions }
\label{sec2}
In this section we introduce a class of CGO solutions suitably designed for our problem stated in an unbounded domain for magnetic Schr\"odinger equations. More precisely, we consider  CGO solutions  $u_j\in H^2(\Omega_1)$, $j=1,2$, satisfying
$\Delta_{A_1} u_1+q_1u_1=0$, $\Delta_{A_2} u_2+q_2u_2=0$  in $\Omega$ for $A_j\in W^{1,\infty}(\Omega)^3\cap H^2(\Omega)^3$ and $q_j\in L^\infty(\Omega)$ satisfying \eqref{t1a}-\eqref{t1c}. In a similar way to \cite{Ki4, Ki5}, we consider first $\theta\in\mathbb S^{1}:=\{y\in\R^2:\ |y|=1\}$, $\xi'\in\theta^\bot\setminus\{0\}$, with $\theta^\bot:=\{y\in\R^2:\ y\cdot\theta=0\}$, $\xi:=(\xi',\xi_3)\in \R^3$, with  $\xi_3\neq0$. Then, we define $\eta\in\mathbb S^2:=\{y\in\R^3:\ |y|=1\}$  by
$$\eta=\frac{(\xi',-\frac{|\xi'|^2}{\xi_3})}{\sqrt{|\xi'|^2+\frac{|\xi'|^4}{\xi_3^2}}}.$$ 
Clearly, we have
\bel{orth}\eta\cdot\xi=(\theta,0)\cdot\xi=(\theta,0)\cdot\eta=0.\ee
We fix also $\psi\in\mathcal C^\infty_0((-2,2);[0,1])$ satisfying $\psi=1$ on $[-1,1]$  and, for $\rho>1$, we introduce solutions $u_j\in H^2(\Omega)$ of
 $\Delta_{A_1} u_1+q_1u_1=0$, $\Delta_{A_2} u_2+q_2u_2=0$ in $\Omega$ of the form
\bel{CGO1}u_1(x',x_3)=e^{\rho \theta\cdot x'}\left(\psi\left(\rho^{-\frac{1}{4}}x_3\right)b_1e^{i\rho x\cdot\eta-i\xi\cdot x}+w_{1,\rho}(x',x_3)\right),\quad x'\in\omega,\ x_3\in\R,\ee
\bel{CGO2}u_2(x',x_3)=e^{-\rho \theta\cdot x'}\left(\psi\left(\rho^{-\frac{1}{4}}x_3\right)b_2 e^{i\rho x\cdot\eta}+w_{2,\rho}(x',x_3)\right),\quad x'\in\omega,\ x_3\in\R.\ee
Here, for $j=1,2$, $b_j\in W^{2,\infty}(\Omega)$ and the remainder term $w_{j,\rho}\in H^2(\Omega)$ satisfies the decay property 
\bel{RCGO} \begin{aligned}&\rho^{-1}\norm{w_{1,\rho}}_{H^2(\Omega)}+\norm{w_{1,\rho}}_{H^1(\Omega)}+\rho\norm{w_{1,\rho}}_{L^2(\Omega)}\leq C(|\xi|^2+1)\left(1+\frac{|\xi'|}{|\xi_3|}\right)\rho^{\frac{7}{8}},\\
&\rho^{-1}\norm{w_{2,\rho}}_{H^2(\Omega)}+\norm{w_{2,\rho}}_{H^1(\Omega)}+\rho\norm{w_{2,\rho}}_{L^2(\Omega)}\leq C\left(1+\frac{|\xi'|}{|\xi_3|}\right)\rho^{\frac{7}{8}},\end{aligned}\ee
with $C>0$ depending on $\Omega$ and $\norm{A_j}_{ W^{1,\infty}(\Omega)^3}+\norm{q_j}_{L^\infty(\Omega)}$, $j=1,2$. 
 We summarize this construction as follows.
\begin{theorem}\label{t4} For $j=1,2$ and  for all $\rho>\rho_2$, with $\rho_2$ the constant of Proposition \ref{p2}, the equations  $\Delta_{A_1} u_1+q_1u_1=0$ and $\Delta_{A_2} u_2+q_2u_2=0$  admit  a solution $u_j\in H^2(\Omega)$ of the form \eqref{CGO1}-\eqref{CGO2} with $w_{j,\rho}$ satisfying the decay property \eqref{RCGO}.  

\end{theorem}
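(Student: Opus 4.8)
The plan is to build $u_1,u_2$ by the conjugation--plus--solvability scheme that is standard for magnetic Schr\"odinger CGO solutions, but adapted to the infinite cylinder $\Omega=\omega\times\R$ through the slowly growing cutoff $\psi(\rho^{-1/4}x_3)$. I will only write out the construction for $u_1$; the one for $u_2$ is identical up to replacing the weight $e^{\rho\theta\cdot x'}$ by $e^{-\rho\theta\cdot x'}$ and dropping the factor $e^{-i\xi\cdot x}$ in the amplitude. First I would conjugate the operator by the Carleman weight: set $P_\rho:=e^{-\rho\theta\cdot x'}(\Delta_{A_1}+q_1)e^{\rho\theta\cdot x'}$, so that seeking $u_1$ of the form \eqref{CGO1} is equivalent to solving $P_\rho w_{1,\rho}=-P_\rho a_1$ for the remainder, where $a_1:=\psi(\rho^{-1/4}x_3)\,b_1\,e^{i\rho\eta\cdot x-i\xi\cdot x}$ is the leading amplitude. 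Using $|(\theta,0)|=1$, $\Delta(\theta\cdot x')=0$ and the orthogonality relations \eqref{orth}, one finds
$$P_\rho=\Delta+2\rho(\theta,0)\cdot\nabla+2iA_1\cdot\nabla+2i\rho A_1\cdot(\theta,0)+i\,\textrm{div}(A_1)-|A_1|^2+q_1+\rho^2,$$
and the key point is that the complex vector $\zeta=\rho(\theta,0)+i\rho\eta$ satisfies $\zeta\cdot\zeta=0$ by \eqref{orth}, so that all terms of order $\rho^2$ in $P_\rho a_1$ cancel.

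Next I would choose the amplitude $b_1$ to annihilate the surviving leading term of order $\rho$. Collecting the $O(\rho)$ contributions in $P_\rho a_1$ (again the would-be $O(\rho)$ terms involving $(\theta,0)\cdot\xi=\theta\cdot\xi'$ and $\eta\cdot\xi$ vanish by \eqref{orth}), one is led to impose on $b_1$ the first-order transport equation
$$\bigl((\theta,0)+i\eta\bigr)\cdot\nabla b_1+iA_1\cdot(\theta,0)\,b_1=0.$$
Since $(\theta,0)$ and $\eta$ are orthonormal, the operator $((\theta,0)+i\eta)\cdot\nabla$ is a Cauchy--Riemann operator in the two-plane they span, and the equation is solved by a Cauchy-transform/$\bar\partial$ argument, producing a smooth $b_1\in W^{2,\infty}(\Omega)$ with norms controlled by $\norm{A_1}_{W^{1,\infty}}$. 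The condition \eqref{t1a} on the coefficients at $\partial\Omega$ is not needed here; it will be used only later, when the solutions are inserted into the boundary integral identity.

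The third step is the quantitative estimate of the source $P_\rho a_1$. After the transport cancellation, the remaining terms come from: (i) derivatives of the cutoff $\psi(\rho^{-1/4}x_3)$; (ii) the genuinely lower-order terms $\Delta b_1$, $\textrm{div}(A_1)$, $|A_1|^2$, $q_1$; and (iii) the $\xi$-dependent contributions generated by $e^{-i\xi\cdot x}$. The dominant piece is $2i\rho\,\eta_3\,\partial_3\psi(\rho^{-1/4}x_3)\,b_1\,e^{i\rho\eta\cdot x-i\xi\cdot x}$, whose $L^2(\Omega)$ norm scales like $\rho\cdot|\eta_3|\cdot\rho^{-1/4}\cdot\rho^{1/8}=\rho^{7/8}|\eta_3|$, the $\rho^{1/8}$ arising from the change of variables $y=\rho^{-1/4}x_3$ over the support of $\psi'$; since $|\eta_3|\le|\xi'|/|\xi_3|$ by the definition of $\eta$, this yields the factor $(1+|\xi'|/|\xi_3|)$ in \eqref{RCGO}, while the $\xi$-differentiations in (iii) produce the extra $(|\xi|^2+1)$ in the first line. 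Finally I would invoke the Carleman estimate of Proposition~\ref{p2} for $P_\rho$ in the unbounded cylinder, valid for $\rho>\rho_2$: by the usual duality (Hahn--Banach/Riesz) argument it furnishes a solution $w_{1,\rho}$ of $P_\rho w_{1,\rho}=-P_\rho a_1$ with $\rho\norm{w_{1,\rho}}_{L^2(\Omega)}\lesssim\norm{P_\rho a_1}$, and the derivative bounds built into the estimate (together with elliptic regularity) upgrade this to the full $H^1$ and $H^2$ controls of \eqref{RCGO}. Reassembling $u_1=e^{\rho\theta\cdot x'}(a_1+w_{1,\rho})$ gives a solution of the desired form.

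I expect the genuine obstacle to be the solvability step in the unbounded domain rather than the algebra of the conjugation. Unlike the bounded case, one cannot simply appeal to a compactly supported Carleman weight: the estimate of Proposition~\ref{p2} must control the non-compact $x_3$-direction, and the whole construction hinges on choosing the cutoff scale $\rho^{1/4}$ so that, on the one hand, the transport equation for $b_1$ remains solvable and, on the other, the cutoff-derivative remainder $\rho^{7/8}$ stays strictly below the factor $\rho$ gained by the Carleman estimate, leaving the net decay $\norm{w_{1,\rho}}_{L^2}\lesssim\rho^{-1/8}$ recorded in \eqref{RCGO}.
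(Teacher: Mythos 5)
Your overall scheme coincides with the paper's: conjugate by the Carleman weight, choose the amplitude $b_1$ to cancel the terms of order $\rho$ by solving a Cauchy--Riemann type transport equation via a Cauchy transform, bound the resulting source in $L^2$ by $C(1+|\xi|^2)\left(1+\frac{|\xi'|}{|\xi_3|}\right)\rho^{7/8}$, and produce the remainder by the Hahn--Banach duality argument based on the $H^{-2}_\rho$ Carleman estimate of Proposition \ref{p2} (whose dual bound in $H^{2}_\rho(\R^3)$ directly gives all three estimates of \eqref{RCGO}). However, your transport equation is wrong, and this is not a cosmetic slip: with your choice of $b_1$ the advertised cancellation of the $O(\rho)$ terms does not occur and \eqref{RCGO} fails.

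The point is that your $P_\rho$ is conjugated only by the real weight $e^{\rho\theta\cdot x'}$, while the amplitude $a_1$ still carries the oscillation $e^{i\rho\eta\cdot x}$. When $P_\rho$ is applied to $a_1$, the magnetic term $2iA_1\cdot\nabla$ also differentiates $e^{i\rho\eta\cdot x}$ and produces the additional first-order term $-2\rho(A_1\cdot\eta)\,a_1$, on top of the term $2i\rho (A_1\cdot(\theta,0))\,a_1$ visible in your formula for $P_\rho$. Collecting all contributions of order $\rho$, and using \eqref{orth} to remove the $\xi$-terms, one finds that the $O(\rho)$ part of $P_\rho a_1$ is
\begin{equation*}
2\rho\,\psi\left(\rho^{-\frac{1}{4}}x_3\right)e^{i\rho\eta\cdot x-i\xi\cdot x}\Bigl[\bigl((\theta,0)+i\eta\bigr)\cdot\nabla b_1+i\bigl[\bigl((\theta,0)+i\eta\bigr)\cdot A_1\bigr]\,b_1\Bigr],
\end{equation*}
up to the cutoff term $2i\eta_3\rho^{3/4}\psi'\left(\rho^{-1/4}x_3\right)b_1e^{i\rho\eta\cdot x-i\xi\cdot x}$, which is admissible. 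Hence the zeroth-order coefficient of the transport equation must be $i\bigl[\bigl((\theta,0)+i\eta\bigr)\cdot A_1\bigr]$ and not $iA_1\cdot(\theta,0)$: this is exactly \eqref{tt}, solved in the paper by $b_1=e^{\Phi_1}$ with $\Phi_1$ the Cauchy transform \eqref{conde1} of the full complex coefficient $(\tilde{\theta}+i\eta)\cdot\tilde{A}_1$ (taken after the compactly supported extension \eqref{ext}). With your $b_1$, the uncancelled term $-2\rho(A_1\cdot\eta)\psi\left(\rho^{-1/4}x_3\right)b_1e^{i\rho\eta\cdot x-i\xi\cdot x}$ remains in the source; since $\norm{\psi\left(\rho^{-1/4}x_3\right)}_{L^2(\omega\times\R)}\sim\rho^{1/8}$, its $L^2(\Omega)$ norm is of order $\rho^{9/8}$, and the duality step then yields only $\rho\norm{w_{1,\rho}}_{L^2(\Omega)}\lesssim\rho^{9/8}$, i.e.\ $\norm{w_{1,\rho}}_{L^2(\Omega)}\lesssim\rho^{1/8}$, so the remainder is as large as the principal part and the decay \eqref{RCGO} is lost. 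A secondary bookkeeping remark: the factor $\left(1+\frac{|\xi'|}{|\xi_3|}\right)$ in \eqref{RCGO} does not come from $|\eta_3|$, which is always at most $1$, but from the amplitude bounds \eqref{l1a} and \eqref{cond31}, i.e.\ from the factor $|(\eta_1,\eta_2)|^{-1}=\sqrt{1+|\xi'|^2/\xi_3^2}$ created by the Cauchy transform; relatedly, the joint extension \eqref{ext} (which is where \eqref{t1a} is used) is part of the definition of the $b_j$ appearing in \eqref{CGO1}--\eqref{CGO2}, so \eqref{t1a} is not as dispensable for this construction as you suggest.
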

\subsection{Principal parts of the CGO}
In this section, we consider $A_j\in W^{2,\infty}(\Omega)^3$, $j=1,2$ satisfying \eqref{t1a}. From now on, for all $r>0$, we define $B_r:=\{x\in\R^{3}:\ |x|<r\}$ and $B_r':=\{x'\in\R^{2}:\ |x'|<r\}$.
In order to define $b_j$, $j=1,2$, we start by introducing a suitable extension of the coefficients $A_j$, $j=1,2$.  Following \cite[Lemma 3.1.]{BKS1}, we define $\tilde{A}_j\in W^{2,\infty}(\R^3)^3$, $j=1,2$, and we fix $r_0>0$ such that
\bel{ext} \begin{aligned} \tilde{A}_j(x)=A_j(x),\quad x\in\Omega\\
\textrm{supp}(\tilde{A}_j)\subset B'_{r_0}\times\R\\
\tilde{A}_1(x)=\tilde{A}_2(x),\quad x\in\R^3\setminus\Omega\\
\norm{\tilde{A}_j}_{ W^{2,\infty}(\R^3)^3}\leq C(\norm{A_1}_{ W^{2,\infty}(\Omega)^3}+\norm{A_2}_{ W^{2,\infty}(\Omega)^3}),\end{aligned}\ee
with $C>0$ depending only on $\Omega$. Here $r_0$ will only depends on $\Omega$.

 Following \cite{Ki5}, we fix $\tilde{\theta}=(\theta,0)\in\R^3$ and  we define
\bel{conde1} \begin{aligned}&\Phi_1(x):=\frac{-i}{2\pi} \int_{\R^2} \frac{(\tilde{\theta}+i\eta)\cdot \tilde{A}_1(x-s_1\tilde{\theta}-s_2\eta)}{s_1+is_2}ds_1ds_2,\\
 &\Phi_2(x):=\frac{-i}{2\pi} \int_{\R^2} \frac{(-\tilde{\theta}+i\eta)\cdot \tilde{A}_2(x+s_1\tilde{\theta}-s_2\eta)}{s_1+is_2}ds_1ds_2.\end{aligned}\ee
According to \cite{Ki5}, one can check that $\Phi_j\in W^{2,\infty}(\Omega)$. Assuming that condition \eqref{t1c} is fulfilled, we can even prove the following estimates.

\begin{lemma} \label{l1} Assume that condition \eqref{t1c} is fulfilled. Then, for all $R\geq r_0$, there exists $C>0$ depending on $\Omega$ and $R$  such that the following estimates 
\bel{l1a} \norm{\Phi_1}_{W^{2,\infty}(B'_R\times \R)}+\norm{\Phi_2}_{W^{2,\infty}(B'_R\times \R)}\leq C(\norm{A_1}_{ W^{2,\infty}(\Omega)^3}+\norm{A_2}_{ W^{2,\infty}(\Omega)^3})\left(1+\frac{|\xi'|}{|\xi_3|}\right),\ee
\bel{l1b} \norm{\Phi_1+\overline{\Phi_2}}_{L^\infty(\R^3)}\leq C\left(\norm{A_1}_{L^\infty(\Omega)}+\norm{A_2}_{L^\infty(\Omega)}+\norm{r\mapsto rf(r)}_{L^{\frac{5}{3}}(1,+\infty)}\right),\ee
hold true.
\end{lemma}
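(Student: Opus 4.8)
The plan is to treat both estimates as uniform bounds for singular integrals built on the Cauchy kernel $\frac{1}{s_1+is_2}$, exploiting that $\tilde\theta=(\theta,0)$ and $\eta$ form an orthonormal pair in $\R^3$ (by \eqref{orth} together with $|\tilde\theta|=|\eta|=1$). Hence $|\tilde\theta+i\eta|=\sqrt2$, the map $(s_1,s_2)\mapsto s_1\tilde\theta+s_2\eta$ is an isometry onto the $2$-plane $P:=\mathrm{span}(\tilde\theta,\eta)$, and $|s_1+is_2|=|s|$. Every bound then reduces to controlling $\int \frac{|\partial_x^\alpha\tilde A_j(x-s_1\tilde\theta-s_2\eta)|}{|s|}\,ds$ for $|\alpha|\le2$.

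For \eqref{l1a} I would differentiate under the integral sign, which simply transfers the derivative onto $\tilde A_j$ since the kernel depends only on $s$; by \eqref{ext} each $\partial_x^\alpha\tilde A_j$ with $|\alpha|\le2$ is bounded, supported in $B'_{r_0}\times\R$, and controlled by $\norm{A_1}_{W^{2,\infty}(\Omega)^3}+\norm{A_2}_{W^{2,\infty}(\Omega)^3}$. The geometric point is that the integrand is nonzero only when the $x'$-part $x'-s_1\theta-s_2\eta'$ of the argument lies in $B'_{r_0}$, where $\eta'$ denotes the first two components of $\eta$; since $\xi'\in\theta^\bot$ forces $\eta'$ parallel to $\xi'$ and hence $\eta'\perp\theta$, one gets $|s_1\theta+s_2\eta'|^2=s_1^2+|\eta'|^2s_2^2$, so for $x'\in B'_R$ the pair $(s_1,s_2)$ is confined to an ellipse of area $\pi(R+r_0)^2/|\eta'|$. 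Bounding $\int\frac{ds}{|s|}$ over this ellipse by its value over the centered disk of equal area (symmetric decreasing rearrangement, as $1/|s|$ is radially decreasing) produces a factor $\lesssim (R+r_0)|\eta'|^{-1/2}$. Finally $|\eta'|^{-1/2}=(1+|\xi'|^2/\xi_3^2)^{1/4}\le 1+|\xi'|/|\xi_3|$, and, the support constraint involving only $x'$, the bound is uniform in $x_3$; this yields \eqref{l1a}.

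The heart of \eqref{l1b} is an exact cancellation. Taking complex conjugates (the $A_j$ being real-valued) and performing the change of variable $s_1\mapsto -s_1$ in $\overline{\Phi_2}$, one matches the argument and the kernel of $\Phi_1$ and obtains
\[\Phi_1(x)+\overline{\Phi_2(x)}=\frac{-i}{2\pi}\int_{\R^2}\frac{(\tilde\theta+i\eta)\cdot(\tilde A_1-\tilde A_2)(x-s_1\tilde\theta-s_2\eta)}{s_1+is_2}\,ds.\]
Thus the full potentials are replaced by their difference $g:=\tilde A_1-\tilde A_2$, which by \eqref{ext} and \eqref{t1c} satisfies $|g(y)|\le\min(G,f(|y|))$ pointwise on $\R^3$, with $G:=\norm{A_1}_{L^\infty(\Omega)}+\norm{A_2}_{L^\infty(\Omega)}$ (indeed $g=0$ outside $\Omega$ and $f\ge0$). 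Rewriting the integral as $\int_P\frac{|g(x+p)|}{|p|}\,dp$ over $P$, it remains to bound this uniformly in $x\in\R^3$.

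This last uniform bound is the main obstacle, and I would split at $|p|=1$. On $\{|p|<1\}$ the factor $\frac{1}{|p|}$ is integrable over the plane and $|g|\le G$ gives a contribution $\le 2\pi G$. On $\{|p|>1\}$ I would apply Hölder with the conjugate exponents $\tfrac53$ and $\tfrac52$: since $\int_{|p|>1}|p|^{-5/2}\,dp<\infty$ in dimension two, the kernel lies in $L^{5/2}$ there, and it suffices to estimate $\int_{|p|>1}\min(G,f(|x+p|))^{5/3}\,dp$. Writing $x=x_P+x^\bot$ with $x_P\in P$ and substituting $q=x_P+p$, this is dominated by $2\pi\int_0^\infty\min(G^{5/3},f(u)^{5/3})\,u\,du$ after the radial change $u=\sqrt{r^2+|x^\bot|^2}$, uniformly in $x$; splitting once more at $u=1$ controls the near part by $\tfrac12 G^{5/3}$ and the far part by $\int_1^\infty u\,f(u)^{5/3}\,du\le\norm{r\mapsto rf(r)}_{L^{5/3}(1,+\infty)}^{5/3}$ (using $u\le u^{5/3}$ for $u\ge1$). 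Collecting the two regions gives \eqref{l1b}. The delicate ingredients are the algebraic cancellation producing $A_1-A_2$, the exponent $5/3$ dictated by the two-dimensionality of the convolution so that $|p|^{-1}\in L^{5/2}$ away from the origin, and the simultaneous use of the $L^\infty$ bound near the singularity and the $L^{5/3}$ decay at infinity.
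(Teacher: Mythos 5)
Your proof is correct and follows the paper's strategy in both halves: for \eqref{l1a}, differentiation under the integral combined with the support property from \eqref{ext}, and for \eqref{l1b}, the conjugation/sign-flip cancellation reducing everything to $A_1-A_2$, followed by a near/far splitting of the Cauchy kernel and H\"older with exponents $\frac53,\frac52$ on the far region. Two details differ, both in your favor. First, for \eqref{l1a} the paper simply notes that the integrand vanishes outside the disk $|(s_1,s_2)|\leq 2R/|(\eta_1,\eta_2)|$ and so obtains the factor $|(\eta_1,\eta_2)|^{-1}=\sqrt{1+|\xi'|^2/\xi_3^2}$; your rearrangement argument over the support ellipse, giving $|(\eta_1,\eta_2)|^{-1/2}$, is sharper but unnecessary. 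Second, and more substantively, your device of carrying the pointwise bound $|A_1-A_2|\leq\min\bigl(G,f(|\cdot|)\bigr)$, with $G:=\norm{A_1}_{L^\infty(\Omega)}+\norm{A_2}_{L^\infty(\Omega)}$, through the far-region estimate repairs a genuine slip in the paper's own computation: there, after translating the integration variable to absorb the in-plane component of $x$ and passing to polar coordinates, the radial integral is restricted to $r\geq 1$, even though the translated domain may contain small radii (this happens precisely when the in-plane part of $x$ is large, which cannot be excluded since \eqref{l1b} is an $L^\infty(\R^3)$ bound) and a decreasing $f\in L^{\frac53}(\R_+)$ is allowed to blow up at the origin. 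As written, the paper's argument therefore yields \eqref{l1b} only with an additional term $\norm{f}_{L^{5/3}(0,1)}$ on the right-hand side — harmless for Theorem \ref{t1}, whose constant is allowed to depend on $f$, but not literally the stated inequality — whereas your $\min(G,f)$ bound controls that region by $G^{5/3}/2$ and gives \eqref{l1b} exactly as stated.
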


\begin{proof} We will prove the estimate \eqref{l1a} only for $\Phi_1$, the proof for $\Phi_2$ being similar. For this purpose, we fix $R\geq r_0$. For $\alpha\in\mathbb N^n$, $|\alpha|\leq2$, we have
$$|\partial_x^\alpha\Phi_1(x)|\leq \frac{1}{2\pi} \int_{\R^2} \frac{|\partial_x^\alpha\tilde{A}_1(x-s_1\tilde{\theta}-s_2\eta)|}{|s_1+is_2|}ds_1ds_2.$$
On the other hand, using the fact that supp$(\tilde{A}_1)\subset B'_{r_0}\times\R$, one can check that, for all $x\in B'_R\times \R$, we get
$$|\partial_x^\alpha\tilde{A}_1(x-s_1\tilde{\theta}-s_2\eta)|=0,\quad |(s_1,s_2)|\geq \frac{2R}{|(\eta_1,\eta_2)|},\ x\in\Omega,$$
where $\eta=(\eta_1,\eta_2,\eta_3)$. It follows that
\bel{l1c}\begin{aligned}|\partial_x^\alpha\Phi_1(x)|&\leq \frac{1}{2\pi} \norm{\tilde{A}_1}_{ W^{2,\infty}(\R^3)^3}\left(\int_{|(s_1,s_2)|\leq \frac{2R}{|(\eta_1,\eta_2)|}} \frac{1}{|s_1+is_2|}ds_1ds_2\right)\\
\ &\leq C\norm{\tilde{A}_1}_{ W^{2,\infty}(\R^3)^3} |(\eta_1,\eta_2)|^{-1}.\end{aligned}\ee
Recalling that 
$$(\eta_1,\eta_2)=\frac{\xi'}{\sqrt{|\xi'|^2+\frac{|\xi'|^4}{\xi_3^2}}}$$
and applying \eqref{ext}, we deduce \eqref{l1a} from \eqref{l1c}. 

Now let us consider \eqref{l1b}. Let us first observe that, according to \eqref{ext}, we have
$${\Phi_1(x)+\overline{\Phi_2}(x)}=\frac{-i}{2\pi} \int_{\R^2} \frac{(\tilde{\theta}+i\eta)\cdot A(x-s_1\tilde{\theta}-s_2\eta)}{s_1+is_2}ds_1ds_2,$$
where $A=A_1-A_2$ is extended by zero to $\R^3$. Therefore, applying \eqref{t1c} and the fact that $f$ is a  decreasing function, we deduce that

$$\begin{aligned}&|\Phi_1(x)+\overline{\Phi_2}(x)|\\
&\leq C \left((\norm{A_1}_{L^\infty(\Omega)}+\norm{A_2}_{L^\infty(\Omega)})\int_{B'_1} \frac{1}{|s_1+is_2|}ds_1ds_2+\int_{\R^2\setminus B'_1} \frac{|f(|x-s_1\tilde{\theta}-s_2\eta|)|}{|s_1+is_2|}ds_1ds_2\right)\\
&\leq C \left(\norm{A_1}_{L^\infty(\Omega)}+\norm{A_2}_{L^\infty(\Omega)}+\left(\int_{\R^2\setminus B'_1} |f(|x-s_1\tilde{\theta}-s_2\eta|)|^{\frac{5}{3}}ds_1ds_2\right)^{\frac{3}{5}}\left(\int_{\R^2\setminus B'_1}|s_1+is_2|^{-\frac{5}{2}}ds_1ds_2\right)^{\frac{2}{5}}\right)\\
\ &\leq C\left(\norm{A_1}_{L^\infty(\Omega)}+\norm{A_2}_{L^\infty(\Omega)}+\left(\int_{\R^2} |f(|(x\cdot\xi)\frac{\xi}{|\xi|}+s_1\tilde{\theta}+s_2\eta|)|^{\frac{5}{3}}ds_1ds_2\right)^{\frac{3}{5}}\right)\\
\ &\leq C\left(\norm{A_1}_{L^\infty(\Omega)}+\norm{A_2}_{L^\infty(\Omega)}+\left(\int_{\R^2} |f(|s_1\tilde{\theta}+s_2\eta|)|^{\frac{5}{3}}ds_1ds_2\right)^{\frac{3}{5}}\right)\\
\ &\leq C\left(\norm{A_1}_{L^\infty(\Omega)}+\norm{A_2}_{L^\infty(\Omega)}+\left(\int_{\mathbb S^1}\int_1^{+\infty} r|f(r)|^{\frac{5}{3}}drd\omega_1\right)^{\frac{3}{5}}\right)\\
\ &\leq C\left(\norm{A_1}_{L^\infty(\Omega)}+\norm{A_2}_{L^\infty(\Omega)}+\norm{r\mapsto r^{\frac{3}{5}}f(r)}_{L^{\frac{5}{3}}(1,+\infty)}\right).\end{aligned}$$
This completes the proof of the lemma. \end{proof}

Fixing
\bel{conde2} b_1(x)=e^{\Phi_{1}(x)},\quad b_2(x)=e^{\Phi_{2}(x)},\ee
we obtain 
\bel{tt}(\tilde{\theta}+i\eta)\cdot \nabla b_1+i[(\tilde{\theta}+i\eta)\cdot \tilde{A}_1(x)]b_1=0,\quad (-\tilde{\theta}+i\eta)\cdot \nabla b_2+i[(-\tilde{\theta}+i\eta)\cdot \tilde{A}_2(x)]b_2=0,\quad x\in\R^3.\ee
Here, using  the fact that $\overline{\omega}\subset B'_{r_0}$, we obtain
\bel{cond31}\norm{b_j}_{W^{2,\infty}( B_{r_0+1}'\times\R)}\leq C(\norm{A_1}_{ W^{2,\infty}(\Omega)^3}+\norm{A_2}_{ W^{2,\infty}(\Omega)^3})\left(1+\frac{|\xi'|}{|\xi_3|}\right),\ j=1,2.\ee

Using these properties of the expressions $b_j$, $j=1,2$, we will complete the construction of the solutions $u_j$ of the form \eqref{CGO1}-\eqref{CGO2}. For this purpose, we will use some suitable Carleman estimates which will extend the one introduced in \cite{Ki5} (see also \cite{FKSU,ST}).

\subsection{General Carleman estimate}

Let us first introduce a weight function depending on two parameters $s,\rho\in(1,+\infty)$ with $\rho>s>1$. Following \cite[Section 2.1]{Ki5},   for $\theta\in\mathbb S^2$ we consider the perturbed weight
\bel{phi}\phi_{\pm,s}(x',x_3):=\pm \rho\theta\cdot x'-s{(x'\cdot\theta)^2\over 2},\quad x=(x',x_3)\in\Omega.\ee
We introduce also the weighted operator
\[ P_{A,q,\pm,s}:=e^{-\phi_{\pm,s}}(\Delta +2iA\cdot\nabla +q)e^{\phi_{\pm,s}}.\]
Then we can consider the following Carleman estimate.
\begin{proposition}\label{p1} \emph{(Proposition 2.1, \cite{Ki5})} Let $A\in L^\infty(\Omega)^3\cap L^\infty(\Omega)^3$ and $q\in L^\infty(\Omega)$. Then  there exist $s_1>1$ and, for $s>s_1$,  $\rho_1(s)$ such that for any $v\in H^2(\Omega)\cap H^1_0(\Omega_1)$ 
the estimate
\bel{p1a} \begin{aligned}&\rho\int_{\partial\omega_{\pm,\theta}\times\R} |\partial_\nu v|^2|\theta\cdot \nu| d\sigma(x)+s\rho^{-2}\int_{\Omega_1}|\Delta v|^2dx+ s\int_{\Omega_1}|\nabla v|^2dx+s\rho^2\int_{\Omega_1}|v|^2dx \\
&\leq C\left[\norm{P_{A,q,\pm,s}v}^2_{L^2(\Omega_1)}+\rho\int_{\partial\omega_{\mp,\theta}\times\R} |\partial_\nu v|^2|\theta\cdot \nu| d\sigma(x)\right]\end{aligned}\ee
holds true for $s>s_1$, $\rho\geq \rho_1(s)$  with $C$  depending only on  $\Omega$ and $ \norm{q}_{L^\infty(\Omega)}+\norm{A}_{L^\infty(\Omega)^3}$.
\end{proposition}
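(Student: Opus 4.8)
The plan is to follow the standard scheme for Carleman estimates with a limiting weight convexified in the $\theta$--direction (as in \cite{Ki5}), the only genuinely new points being the treatment of the unbounded variable $x_3$ and the sign bookkeeping of the boundary terms. First I would reduce to the principal part. Conjugating the lower--order terms gives
\bel{conj}P_{A,q,\pm,s}=P_{0,0,\pm,s}+2iA\cdot\nabla+2i(A\cdot\nabla\phi_{\pm,s})+q,\qquad P_{0,0,\pm,s}:=e^{-\phi_{\pm,s}}\Delta e^{\phi_{\pm,s}},\ee
and since $\nabla\phi_{\pm,s}=(\pm\rho-s(x'\cdot\theta))\tilde\theta$ with $\tilde\theta=(\theta,0)$ is bounded by $C\rho$ on $\Omega_1$ (as $\omega$ is bounded and $\rho>s$), the extra terms applied to $v$ have $L^2(\Omega_1)$ norm at most $C(\norm{\nabla v}_{L^2(\Omega_1)}+\rho\norm{v}_{L^2(\Omega_1)})$, with $C$ depending on $\norm{A}_{L^\infty}+\norm{q}_{L^\infty}$. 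For $s$ large these are absorbed by the terms $s\norm{\nabla v}^2$ and $s\rho^2\norm{v}^2$ on the left of \eqref{p1a}, so it suffices to prove the estimate with $P_{A,q,\pm,s}$ replaced by $P_{0,0,\pm,s}$.

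Computing the conjugated Laplacian explicitly and splitting it into its formally self--adjoint and skew--adjoint parts,
\bel{conj2}P_{0,0,\pm,s}=\underbrace{\Delta+\abs{\nabla\phi_{\pm,s}}^2}_{=:\mathcal S}+\underbrace{2\nabla\phi_{\pm,s}\cdot\nabla+\Delta\phi_{\pm,s}}_{=:\mathcal A},\qquad \Delta\phi_{\pm,s}=-s,\ee
I would expand
\bel{expand}\norm{P_{0,0,\pm,s}v}^2_{L^2(\Omega_1)}=\norm{\mathcal Sv}^2_{L^2(\Omega_1)}+\norm{\mathcal Av}^2_{L^2(\Omega_1)}+\langle[\mathcal S,\mathcal A]v,v\rangle_{L^2(\Omega_1)}+\mathcal R_\partial,\ee
where $\mathcal R_\partial$ gathers all boundary contributions generated by the integrations by parts that make $\mathcal S$ symmetric and $\mathcal A$ skew--symmetric.

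The positive bulk terms come from the commutator, which is the differential operator
\bel{commut}[\mathcal S,\mathcal A]=-4s\,(\tilde\theta\cdot\nabla)^2+4s(\pm\rho-s(x'\cdot\theta))^2.\ee
Pairing with $v$ and integrating by parts, the first term gives $4s\norm{\tilde\theta\cdot\nabla v}^2$ and the second gives $4s\int_{\Omega_1}(\pm\rho-s(x'\cdot\theta))^2\abs{v}^2\,dx\geq cs\rho^2\norm{v}^2$ (using $\rho>s$ and $\omega$ bounded), which yields the zeroth--order term of \eqref{p1a}. The full gradient term $s\norm{\nabla v}^2$ is then recovered by combining the directional bound on $\tilde\theta\cdot\nabla v$ with the energy identity obtained from pairing $P_{0,0,\pm,s}v$ against $v$ (which controls $\norm{\nabla v}^2$ by $\rho^{-2}\norm{P_{0,0,\pm,s}v}^2+\rho^2\norm{v}^2+\norm{\tilde\theta\cdot\nabla v}^2$ up to boundary terms), while $s\rho^{-2}\norm{\Delta v}^2$ follows from \eqref{conj2} by writing $\Delta v=P_{0,0,\pm,s}v-2\nabla\phi_{\pm,s}\cdot\nabla v-(\abs{\nabla\phi_{\pm,s}}^2-s)v$ and using the already controlled quantities.

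The delicate point, and the main obstacle, is the bookkeeping of $\mathcal R_\partial$. Because $\phi_{\pm,s}$ is independent of $x_3$, the normal derivative $\partial_\nu\phi_{\pm,s}$ vanishes on the caps and the integration over the whole line $x_3\in\R$ produces no contribution there; this is exactly what allows the argument to go through in the unbounded cylinder. On the lateral boundary $\partial\omega\times\R$ one has $\nu=(\nu',0)$ and, since $v$ has vanishing trace, $\nabla v=(\partial_\nu v)\nu$, so the surviving boundary terms reduce to $\mp\rho\int_{\partial\omega\times\R}(\theta\cdot\nu')\abs{\partial_\nu v}^2\,d\sigma$ up to remainders of order $s$. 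Splitting $\partial\omega$ according to the sign of $\theta\cdot\nu'$, the part on which $\mp(\theta\cdot\nu')>0$ gives a nonnegative term that I keep on the left as $\rho\int_{\partial\omega_{\pm,\theta}\times\R}\abs{\partial_\nu v}^2\abs{\theta\cdot\nu}\,d\sigma$, while the complementary part carries the unfavourable sign and is moved to the right as $\rho\int_{\partial\omega_{\mp,\theta}\times\R}\abs{\partial_\nu v}^2\abs{\theta\cdot\nu}\,d\sigma$. Ensuring that the $O(s)$ boundary remainders are dominated by these $O(\rho)$ principal terms is the technical crux. Choosing first $s>s_1$ large enough to absorb the lower--order bulk and boundary remainders and then $\rho\geq\rho_1(s)$ large enough to guarantee $\rho>s$ and the positivity of \eqref{commut} yields \eqref{p1a}, with $C$ depending only on $\Omega$ and $\norm{q}_{L^\infty(\Omega)}+\norm{A}_{L^\infty(\Omega)^3}$.
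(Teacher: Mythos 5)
Your proposal is correct and is essentially the proof this estimate has in the cited literature: the reduction to the principal part by absorbing the magnetic and electric terms into $s\norm{\nabla v}^2+s\rho^2\norm{v}^2$, the self-adjoint/skew-adjoint splitting with the positive commutator $[\mathcal S,\mathcal A]=-4s(\tilde\theta\cdot\nabla)^2+4s\big(\pm\rho-s(x'\cdot\theta)\big)^2$ (which is exactly right for the weight \eqref{phi}), the recovery of $s\norm{\nabla v}^2$ and $s\rho^{-2}\norm{\Delta v}^2$ from the energy identity and the equation, and the Rellich-type bookkeeping of the lateral boundary terms split by the sign of $\theta\cdot\nu$ — with the $x_3$-independence of the weight being precisely what makes the unbounded direction harmless (modulo the orientation convention for $\partial\omega_{\pm,\theta}$, which this paper never defines; with $h=\pm\rho-s(x'\cdot\theta)$ the Rellich term is $2\int_{\partial\Omega}h(\theta\cdot\nu)|\partial_\nu v|^2d\sigma$, so for $P_{+,s}$ the favorable part is where $\theta\cdot\nu>0$, the opposite of your sign). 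Note that the paper itself gives no proof of Proposition \ref{p1} — it is imported verbatim from \cite[Proposition 2.1]{Ki5} — and your decomposition coincides, up to the shifts $P_1=\mathcal S+s$, $P_2=\mathcal A-s$, with the splitting $P_1+P_2+P_3$ that the paper reuses in its proof of Proposition \ref{p2}, so you have reconstructed the cited argument rather than found a different one.
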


In this subsection we will apply  Proposition \ref{p1} in order to derive  Carleman estimates in negative order Sobolev space required for the construction of the CGO solutions.
For this purpose, we recall some preliminary tools.  Following \cite{Ki3,Ki5} (see also \cite{FKSU,ST}), for all $m\in\R$, we consider the space $H^m_\rho(\R^{3})$ defined by
\[H^m_\rho(\R^{3})=\{u\in\mathcal S'(\R^{3}):\ (|\xi|^2+\rho^2)^{m\over 2}\hat{u}\in L^2(\R^{3})\},\]
with the norm
\[\norm{u}_{H^m_\rho(\R^{3})}^2=\int_{\R^3}(|\xi|^2+\rho^2)^{m}|\hat{u}(\xi)|^2 d\xi .\]
In the above formula, for all tempered distribution $u\in \mathcal S'(\R^3)$,  $\hat{u}$ denotes the Fourier transform of $u$ which, for $u\in L^1(\R^3)$, corresponds to
$$\hat{u}(\xi):=\mathcal Fu(\xi):= (2\pi)^{-{3\over2}}\int_{\R^3}e^{-ix\cdot \xi}u(x)dx.$$
From now on, for $m\in\R$ and $\xi\in \R^3$,  we fix $$\left\langle \xi,\rho\right\rangle=(|\xi|^2+\rho^2)^{1\over2}$$
and $\left\langle D_x,\rho\right\rangle^m u$ given by
\[\left\langle D_x,\rho\right\rangle^m u=\mathcal F^{-1}(\left\langle \xi,\rho\right\rangle^m \mathcal Fu).\]
For $m\in\R$ we introduce also the class of symbols
\[S^m_\rho=\{c_\rho\in\mathcal C^\infty(\R^3\times\R^3):\ |\pd_x^\alpha\pd_\xi^\beta c_\rho(x,\xi)|\leq C_{\alpha,\beta}\left\langle \xi,\rho\right\rangle^{m-|\beta|},\  \alpha,\beta\in\mathbb N^3\}.\]
In light of \cite[Theorem 18.1.6]{Ho3}, for any $m\in\R$ and $c_\rho\in S^m_\rho$, we define $c_\rho(x,D_x)$, with  $D_x=-i\nabla $, by
\[c_\rho(x,D_x)y(x)=(2\pi)^{-{3\over 2}}\int_{\R^3}c_\rho(x,\xi)\hat{y}(\xi)e^{ix\cdot \xi} d\xi,\quad y\in\mathcal S(\R^3).\]
For all $m\in\R$, we set also $OpS^m_\rho:=\{c_\rho(x,D_x):\ c_\rho\in S^m_\rho\}$ and 
$$OpS^{-\infty}_\rho=\bigcap_{m<0} OpS^m_\rho.$$
We introduce also
$$P_{A,q,\pm}:=e^{\mp \rho x'\cdot\theta}(\Delta_{ A}+q) e^{\pm \rho x'\cdot\theta}.$$
According to \cite[Proposition 2.4]{Ki5}, there exists $C>0$, $\rho_*>1$, depending only on  $\Omega$ and $ \norm{q}_{L^\infty(\Omega))}+\norm{A}_{L^\infty(\Omega)^3}$, such that, for all $v\in \mathcal C^\infty_0(\Omega_1)$, the following estimate
$$\rho^{-1}\norm{v}_{H^{-1}_\rho(\R^3)}\leq C\norm{P_{A,q,\pm}v}_{H^{-1}_\rho(\R^3)},\quad \rho>\rho_*,$$
holds true. Using this estimate we can build CGO solutions lying in $H^1(\Omega)$. However, in order to improve the smoothness of our CGO solutions into functions lying in $H^2(\Omega)$, we will  consider the following  extension of \cite[Proposition 2.4]{Ki5}.

\begin{proposition}\label{p2} Let $A\in W^{1,\infty}(\Omega)^3$ and $q\in L^\infty(\Omega)$. Then, there exists $\rho_2>1$, depending only on $\Omega$ and $\norm{q}_{L^\infty(\Omega)}+\norm{A}_{W^{1,\infty}(\Omega)^3}$,   such that  for all $v\in \mathcal C^\infty_0(\Omega)$ we have 
\bel{p2a}\rho^{-1}\norm{v}_{L^2(\R^3)}\leq C\norm{P_{A,q,\pm}v}_{H^{-2}_\rho(\R^3)},\quad \rho>\rho_2,\ee
with $C>0$ depending on $\Omega$  and $\norm{q}_{L^\infty(\Omega)}+\norm{A}_{W^{1,\infty}(\Omega)^3}$.
\end{proposition}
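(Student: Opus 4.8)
The plan is to upgrade the scalar estimate \cite[Proposition 2.4]{Ki5}, namely $\rho^{-1}\norm{v}_{H^{-1}_\rho(\R^3)}\leq C\norm{P_{A,q,\pm}v}_{H^{-1}_\rho(\R^3)}$, into an estimate displaying the full two--order elliptic gain of the conjugated operator together with a single negative power of $\rho$. Expanding the conjugation gives
\[
P_{A,q,\pm}=\Delta\pm 2\rho\,\tilde\theta\cdot\nabla+\rho^2+2iA\cdot\nabla\pm 2i\rho\,\tilde\theta\cdot A+i\,\textrm{div}(A)-\abs{A}^2+q,\qquad \tilde\theta=(\theta,0),
\]
whose principal symbol is $p_\pm(\xi)=\rho^2-\abs{\xi}^2\pm 2i\rho\,(\theta\cdot\xi')$. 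First I would isolate this principal (constant--coefficient) part, which commutes with $\left\langle D_x,\rho\right\rangle^{-1}$, and treat the variable--coefficient terms afterwards. Because passing from $H^{-1}_\rho$ to $H^{-2}_\rho$ forces me to insert the elliptic weight $\left\langle D_x,\rho\right\rangle^{-1}$, and this weight does not preserve the support of $v$, the whole argument is carried out inside the semiclassical pseudodifferential calculus already set up in \cite{Ki5}; this is exactly what makes the hypothesis $A\in W^{1,\infty}(\Omega)^3$ (rather than $A\in L^\infty$) necessary.

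The heart of the matter is the lower bound $\norm{P_{A,q,\pm}v}_{H^{-2}_\rho(\R^3)}\geq c\,\rho^{-1}\norm{v}_{L^2(\R^3)}$, which I would obtain from the behaviour of $p_\pm$. Away from the characteristic set, i.e. on the region $\{\,\bigl|\,\abs{\xi}^2-\rho^2\,\bigr|\geq \rho^2/2\,\}$, one checks directly that $\left\langle\xi,\rho\right\rangle^{-4}\abs{p_\pm(\xi)}^2\geq c_0$; this is the genuine elliptic gain of two derivatives and is even stronger than what is needed. On a conic neighbourhood of the characteristic set $\{\abs{\xi}=\rho,\ \theta\cdot\xi'=0\}$ the symbol degenerates, but there $\left\langle\xi,\rho\right\rangle\simeq\rho$ and the imaginary part controls everything, so that $\left\langle\xi,\rho\right\rangle^{-4}\abs{p_\pm(\xi)}^2\geq c\,\rho^{-2}(\theta\cdot\xi')^2$. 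Pairing this with the Poincar\'e (Wirtinger) inequality in the variable $\theta\cdot x'$ --- available precisely because $v$ is supported in the \emph{bounded} cross--section $\omega$, so that $\norm{\theta\cdot\nabla v}_{L^2(\R^3)}\geq \kappa\norm{v}_{L^2(\R^3)}$ --- is what produces the single loss $\rho^{-1}$. As noted above, since $\left\langle D_x,\rho\right\rangle^{-1}v$ is no longer compactly supported, this Poincar\'e step must be implemented through the symbol calculus of \cite{Ki5} rather than naively on the Fourier side; verifying it on a model amplitude $v=e^{i\xi_0\cdot x}\chi$ with $\abs{\xi_0}=\rho$ and $\theta\cdot\xi_0'=0$ already exhibits the mechanism.

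It remains to absorb the variable--coefficient terms. The zeroth--order contributions $i\,\textrm{div}(A)-\abs{A}^2+q$ are genuinely of lower order and are controlled by a factor $\rho^{-1}$ for $\rho$ large. The delicate terms are the \emph{subprincipal} magnetic ones $2iA\cdot\nabla$ and $\pm 2i\rho\,\tilde\theta\cdot A$: these sit at the critical scaling of the estimate, being of the same size as the lower bound near the characteristic set, and I would handle them exactly as in \cite[Proposition 2.4]{Ki5}, where they are one power of $\rho$ below the principal symbol. The only genuinely new quantity, produced by measuring $P_{A,q,\pm}v$ in $H^{-2}_\rho$ instead of $H^{-1}_\rho$, is the commutator of $\left\langle D_x,\rho\right\rangle^{-1}$ with the first--order magnetic term $2iA\cdot\nabla$; by the symbol calculus this is bounded on $L^2(\R^3)$, uniformly in $\rho$, with norm $\lesssim\norm{A}_{W^{1,\infty}(\Omega)^3}$, which is where the extra derivative of $A$ is spent. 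Collecting everything and choosing $\rho_2$ large enough to absorb all remainders yields \eqref{p2a}. I expect the main obstacle to be precisely this interplay at the characteristic set between the degeneracy of the principal symbol and the critically scaled magnetic term, aggravated by the loss of compact support under $\left\langle D_x,\rho\right\rangle^{-1}$ --- the reason the argument cannot be reduced to a bare Fourier--multiplier computation and must stay within the pseudodifferential framework.
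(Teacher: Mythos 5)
Your proposal stalls at precisely the two points you yourself flag as delicate, and neither can be repaired along the lines you indicate. First, the Poincar\'e step. The principal part of $P_{A,q,\pm}$ is a constant-coefficient operator, so measuring it in $H^{-2}_\rho$ gives $\int_{\R^3} m(\xi)\abs{\hat v(\xi)}^2d\xi$ with $m(\xi)=\left\langle \xi,\rho\right\rangle^{-4}\abs{p_\pm(\xi)}^2$, and your two bounds are $m\geq c_0$ on the elliptic region $F$ and $m\geq c\rho^{-2}(\theta\cdot\xi')^2$ near the characteristic set. To exploit the Poincar\'e inequality $\kappa^2\norm{v}_{L^2}^2\leq\int_{\R^3}(\theta\cdot\xi')^2\abs{\hat v}^2d\xi$ you must dominate the far-region piece $\rho^{-2}\int_{F}(\theta\cdot\xi')^2\abs{\hat v}^2d\xi$ by $C\int_{F}m\abs{\hat v}^2d\xi$; but for $\abs{\xi}\gg\rho$ one has $m\simeq 1$ while $\rho^{-2}(\theta\cdot\xi')^2$ can be of size $\rho^{-2}\abs{\xi}^2\to\infty$, so this domination fails at high frequencies. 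Truncating $v$ to low frequencies destroys the compact support that Poincar\'e requires --- this is the circle you acknowledge, and ``implement the Poincar\'e step through the symbol calculus'' names the obstacle rather than resolving it. Second, and worse, the magnetic term $\pm 2i\rho\,\tilde\theta\cdot A$: its contribution is $\norm{\rho(\tilde\theta\cdot A)v}_{H^{-2}_\rho}\leq \rho^{-1}\norm{A}_{L^\infty}\norm{v}_{L^2}$, i.e.\ exactly the size of the target lower bound $\rho^{-1}\norm{v}_{L^2}$ with a constant proportional to $\norm{A}_{L^\infty}$, which is not small. No perturbative absorption can close such an estimate, and your plan to treat these terms ``exactly as in \cite[Proposition 2.4]{Ki5}'' is inconsistent with your own observation that near the characteristic set they are of the same size as the degenerate principal lower bound.

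The paper resolves both difficulties at once by a device your proposal never introduces: the convexified weight $\phi_{\pm,s}=\pm\rho\,\theta\cdot x'-s(x'\cdot\theta)^2/2$, which brings in a second large parameter $s$. The proof does not analyze the conjugated symbol at all. It conjugates by $\left\langle D_x,\rho\right\rangle^{\pm 2}$, writes $\left\langle D_x,\rho\right\rangle^{-2}(P_1+P_2)\left\langle D_x,\rho\right\rangle^{2}=P_1+P_2+R_\rho(x,D_x)$ with $\norm{R_\rho(x,D_x)w}_{L^2(\R^3)}\leq Cs^2\norm{w}_{L^2(\R^3)}$, and then imports the lower bound from the already-established $H^1$-level Carleman estimate of Proposition \ref{p1}, applied with $A=0$, $q=0$ on an enlarged cylinder $\tilde\Omega$; that estimate carries the crucial extra factor $s^{1/2}$, namely $\norm{P_1w+P_2w}_{L^2(\R^3)}\geq Cs^{1/2}\left(\rho^{-1}\norm{\Delta w}_{L^2(\R^3)}+\norm{\nabla w}_{L^2(\R^3)}+\rho\norm{w}_{L^2(\R^3)}\right)$. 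The pseudodifferential remainder is then absorbed by taking $\rho/s^2$ large, the entire magnetic block $P_3$ is bounded by $C\rho^{-1}\norm{w}_{H^2_\rho(\R^3)}$ and absorbed by taking $s$ large (this is exactly what replaces your impossible critical absorption), and the loss of compact support under $\left\langle D_x,\rho\right\rangle^{-2}$ is handled by the cutoffs $\psi_0,\psi_1$ together with the fact that $(1-\psi_0)\left\langle D_x,\rho\right\rangle^{-2}\psi_1\in OpS^{-\infty}_\rho$. Fixing $s$ at the end yields \eqref{p2a}. Without this second parameter, or some equivalent positive-commutator gain at the characteristic set, your scheme cannot close even in the case $A=0$.
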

\begin{proof} 
Without loss of generality, we will only show this result  for $P_{A,q,+}v$. We  fix
$$S_{ A,q,+,s}:=e^{-\phi_{+,s}}(\Delta_{ A}+q)e^{\phi_{+,s}}$$
and we split $S_{A,+,s}$ into three terms
\[S_{ A,q,+,s}=P_1+P_2+P_3,\]
where we recall that
\[P_1=\Delta+\rho^2-2s\rho (x'\cdot\theta)+s^2 (x'\cdot\theta)^2+s,\quad  P_2=2(\rho-s (x'\cdot\theta))\theta  \cdot\nabla-2s,\]
\[ P_3=2iA\cdot\nabla+2iA\cdot\nabla \phi_{+,s}+q-|A|^2+i\textrm{div}(A)=2iA\cdot\nabla+2(\rho-s(x'\cdot\theta)) iA'\cdot\theta +q-|A|^2+i\textrm{div}(A).\]
We choose $ \tilde{\omega}$  a bounded $\mathcal C^2$ open set of $\R^2$ such that $\overline{\omega}\subset\tilde{\omega}$ and we extend the function  $A$ and $q$ to $\R^3$ with   $q=0$ on $\R^3\setminus \Omega$ and $A\in W^{1,\infty}(\R^3)^3$ satisfying
$$\norm{A}_{W^{1,\infty}(\R^3)^3}\leq C\norm{A}_{W^{1,\infty}(\Omega)^3},$$
where $C>0$ depends only on $\Omega$. We consider also $\tilde{\Omega}=\tilde{\omega}\times\R$. We prove first the following the estimate
\bel{car}\rho^{-1}\norm{v}_{L^2(\R^3)}\leq C\norm{S_{ A,q,+,s}v}_{H^{-2}_\rho(\R^3)},\quad v\in\mathcal C^\infty_0(\Omega_1).\ee
For this purpose, we set $w\in H^4(\R^3)$ such that supp$(w)\subset\tilde{\Omega}$  and we consider
\[\left\langle D_x,\rho\right\rangle^{-2}(P_1+P_2)\left\langle D_x,\rho\right\rangle^2 w.\]
In all  this proof $C>0$ denotes a  constant depending on $\Omega$ and $\norm{A}_{W^{1,\infty}(\Omega)^3}+\norm{q}_{L^\infty(\Omega)}$.
According to the properties of composition of pseudoddifferential operators (e.g. \cite[Theorem 18.1.8]{Ho3}), we have
\bel{l2c}\left\langle D_x,\rho\right\rangle^{-2}(P_1+P_2)\left\langle D_x,\rho\right\rangle^2=P_1+P_2+R_\rho(x,D_x),\ee
where $R_\rho$ is defined by
\[R_\rho(x,\xi)=\nabla_\xi\left\langle \xi,\rho\right\rangle^{-2}\cdot D_x(p_1(x,\xi)+p_2(x,\xi))\left\langle \xi,\rho\right\rangle^2+\underset{\left\langle \xi,\rho\right\rangle\to+\infty}{ o}(1),\]
with
$$p_1(x,\xi)=-|\xi|^2+\rho^2-2s\rho (x'\cdot\theta)+s^2 (x'\cdot\theta)^2+s,\quad p_2(x,\xi)=2i[\rho-s(x'\cdot\theta)]\theta\cdot\xi'-2s,\quad \xi=(\xi',\xi_3)\in \R^2\times\R.$$
Therefore, one can check that
\bel{l2d} \norm{R_\rho(x,D_x)w}_{L^2( \R^3)}\leq Cs^2\norm{w}_{L^2( \R^3)}.\ee
Moreover, in view of \eqref{p1a} applied to $w$, with $\Omega$ replaced by  $\tilde{\Omega}$ and $A=0$, $q=0$,  we obtain
\[\norm{P_1w+P_2w}_{L^2(\R^3)}\geq C\left(s^{1/2}\rho^{-1}\norm{\Delta w}_{L^2(\R^3)}+s^{1/2}\norm{\nabla w}_{L^2(\R^3)}+s^{1/2}\rho\norm{ w}_{L^2(\R^3)}\right).\]
Combining this  with \eqref{l2c}-\eqref{l2d} and choosing ${\rho\over s^2}$ sufficiently large, we get
$$\begin{array}{l} \norm{(P_1+P_2)\left\langle D_x,\rho\right\rangle^2 w}_{H^{-2}_\rho(\R^3)}\\
=\norm{\left\langle D_x,\rho\right\rangle^{-2}(P_1+P_2)\left\langle D_x,\rho\right\rangle^2 w}_{L^2( \R^3)}\\ \geq  Cs^{1/2}\left(\rho^{-1}\norm{\Delta w}_{L^2(\R^3)}+\norm{\nabla w}_{L^2(\R^3)}+\rho\norm{ w}_{L^2(\R^3)}\right).\end{array}$$
Meanwhile, since $w\in H^2(\tilde{\Omega})\cap H^1_0(\tilde{\Omega})$, the elliptic regularity (e.g. \cite[Lemma 2.2]{CKS}) implies
$$\norm{w}_{H^2(\R^3)}=\norm{w}_{H^2(\tilde{\Omega})}\leq C(\norm{\Delta w}_{L^2(\tilde{\Omega})}+\norm{ w}_{L^2(\tilde{\Omega})}).$$
In view of the previous estimate, for $s$ sufficiently large, we have
 \bel{l2e}\norm{(P_1+P_2)\left\langle D_x,\rho\right\rangle^2 w}_{H^{-2}_\rho(\R^3)}\geq Cs^{\frac{1}{2}}\rho^{-1}\norm{w}_{H^2_\rho(\R^3)}.\ee
In addition, we find
\bel{p2c}\begin{aligned}&\norm{P_3\left\langle D_x,\rho\right\rangle^2 w}_{H^{-2}_\rho(\R^3)}\\
&\leq \norm{[2i(\rho-s(x'\cdot\theta))A\cdot\theta+(q-|A|^2)]\left\langle D_x,\rho\right\rangle^2 w}_{H^{-2}_\rho(\R^3)}+2\norm{A\cdot\nabla \left\langle D_x,\rho\right\rangle^2 w}_{H^{-2}_\rho(\R^3)}\\
&\ \ \ \ +\norm{i\textrm{div}(A)\left\langle D_x,\rho\right\rangle^2 w}_{H^{-2}_\rho(\R^3)}.\end{aligned}\ee
For the first term on the right hand side of this inequality, we have
\bel{p2d}\begin{aligned}\norm{[2i(\rho-s(x'\cdot\theta))A\cdot\theta+(q-|A|^2)]\left\langle D_x,\rho\right\rangle^2 w}_{H^{-2}_\rho(\R^3)}&\leq\rho^{-2}\norm{[2i(\rho-s(x'\cdot\theta))A\cdot\theta+(q-|A|^2)]\left\langle D_x,\rho\right\rangle^2 w}_{L^2(\R^3)}\\
\ &\leq\rho^{-1}C\norm{\left\langle D_x,\rho\right\rangle^2 w}_{L^2(\R^3)}\\
\ &\leq C\rho^{-1}\norm{ w}_{H^2_\rho(\R^3)},\end{aligned}\ee
with $C$ depending only on $\norm{A}_{W^{1,\infty}(\Omega_1)^3}+\norm{q}_{L^\infty(\Omega_1)}$. For the second term on the right hand side of \eqref{p2c}, we get
\bel{p2e}\begin{aligned}\norm{A\cdot\nabla \left\langle D,\rho\right\rangle^2 w}_{H^{-2}_\rho(\R^3)}&\leq \rho^{-1}\norm{A\cdot\nabla \left\langle D_x,\rho\right\rangle^2 w}_{H^{-1}(\R^3)}\\
\ &\leq \rho^{-1}\norm{A}_{W^{1,\infty}(\Omega_1)^3}\norm{\nabla \left\langle D,\rho\right\rangle^2 w}_{H^{-1}(\R^3)}\\
\ &\leq \rho^{-1}\norm{A}_{W^{1,\infty}(\Omega_1)^3}\norm{ w}_{H^{2}_\rho(\R^3)}.\end{aligned}\ee
Finally,  for the last term on the right hand side of \eqref{p2c},  we find
\bel{p2f}\begin{aligned}\norm{i\textrm{div}(A)\left\langle D_x,\rho\right\rangle^2 w}_{H^{-2}_\rho(\R^3)}&\leq \rho^{-2} \norm{i\textrm{div}(A)\left\langle D_x,\rho\right\rangle^2 w}_{L^2(\R^3)^3}\\
\ &\leq 3\rho^{-1}\norm{A}_{W^{1,\infty}(\Omega_1)^3}\norm{w}_{H^2_\rho(\R^3))}.\end{aligned}\ee
Combining the estimates \eqref{p2c}-\eqref{p2f}, we obtain
$$\norm{P_3\left\langle D_x,\rho\right\rangle^2 w}_{H^{-2}_\rho(\R^3)}\leq C\rho^{-1}\norm{w}_{H^2_\rho(\R^3)}$$
and applying \eqref{l2e} for $s>1$ sufficiently large, we have
\bel{p2g}\norm{R_{ A,q,+,s}\left\langle D_x,\rho\right\rangle^2 w}_{H^{-2}_\rho(\R^3)}^2\geq Cs^{\frac{1}{2}}\rho^{-1}\norm{w}_{H^2_\rho(\R^3)}.\ee
Now let us fix $\omega_j$, $j=1,2$ two open subsets of $\tilde{\omega}$  such that $\overline{\omega}\subset \omega_1$, $\overline{\omega_1}\subset \omega_2$ and $\overline{\omega_2}\subset \tilde{\omega}$.
We consider $\psi_0\in\mathcal C^\infty_0(\tilde{\omega})$ satisfying $\psi_0=1$ on $\overline{\omega_2}$, $v\in\mathcal C^\infty_0(\Omega)$, $w(x',x_3)=\psi_0(x') \left\langle D_x,\rho\right\rangle^{-2} v(x',x_3)$ and  $\psi_1\in\mathcal C^\infty_0(\omega_1)$ satisfying $\psi_1=1$ on $\omega$. Then,  we have $$(1-\psi_0 )\left\langle D_x,\rho\right\rangle^{-1} v=(1-\psi_0 )\left\langle D_x,\rho\right\rangle^{-2}\psi_1 v,$$
where $\psi_1v:=(x',x_3)\mapsto \psi_1(x')v(x',x_3)$. In view of to \cite[Theorem 18.1.8]{Ho3}, using the fact that $1-\psi_0$ is vanishing in a neighborhood of supp$(\psi_1)$, we have $(1-\psi_0) \left\langle D_x,\rho\right\rangle^{-1}\psi_1\in OpS^{-\infty}_\rho$ and we get
\[ \begin{aligned}\rho^{-1}\norm{v}_{L^2(\R^3)}&=\rho^{-1}\norm{\left\langle D_x,\rho\right\rangle^{-2} v}_{H^2_\rho(\R^3)}\\
\ &\leq \rho^{-1}\norm{w}_{H^2_\rho(\R^3)}+\rho^{-1}\norm{(1-\psi_0)\left\langle D_x,\rho\right\rangle^{-2}\psi_1 v}_{H^2_\rho(\R^3)}\\
\ &\leq \rho^{-1}\norm{w}_{H^2_\rho(\R^3)}+{C\norm{v}_{L^2(\R^3)}\over\rho^2} .\end{aligned}\]
In the same way, we obtain
$$\begin{aligned}\norm{P_{A,-,s} v}_{H^{-2}_\rho(\R^3)}&\geq \norm{P_{A,-,s}\left\langle D_x,\rho\right\rangle^2 w}_{H^{-2}_\rho(\R^3)}-\norm{P_{A,-,s}\left\langle D_x,\rho\right\rangle^2 (1-\psi_0 )\left\langle D_x,\rho\right\rangle^{-2}\psi_1 v}_{H^{-1}_\rho(\R^3)}\\
\ &\geq \norm{P_{A,-,s}\left\langle D_x,\rho\right\rangle^2 w}_{H^{-2}_\rho(\R^3)}-C\norm{ (1-\psi_0 )\left\langle D_x,\rho\right\rangle^{-2}\psi_1 v}_{H^{2}_\rho(\R^3)}\\
\ &\geq \norm{P_{A,-,s}\left\langle D_x,\rho\right\rangle^2 w}_{H^{-2}_\rho(\R^3)}-{C\norm{v}_{L^2(\R^{1+n})}\over\rho^2}.\end{aligned}$$
Combining these estimates with \eqref{p2g}, we deduce that \eqref{car} holds true for a sufficiently large value of  $\rho$. Then, fixing $s$, we deduce \eqref{p2a}. \end{proof}

\subsection{Remainder term of the CGO solutions}
In this subsection we will construct the remainder term $w_{j,\rho}$, $j=1,2$, appearing in \eqref{CGO1}-\eqref{CGO2} and satisfying the decay property \eqref{RCGO}. For this purpose, we will combine the Carleman estimate \eqref{p2a} with the properties of the expressions $b_j$, $j=1,2$, in order to complete the construction of these solutions.  In this subsection, we assume that $\rho>\rho_2$ with $\rho_2$ the constant introduced in Proposition \ref{p2} and we fix $A_j\in (W^{2,\infty} (\Omega ))^3$, $j=1,2$, satisfying \eqref{t1a}-\eqref{t1b}. The proof for the existence of the remainder term $w_{1,\rho}$ and $w_{2,\rho}$ being similar, we will only show the existence of $w_{1,\rho}$. Let us first remark that $w_{1,\rho}$ should be a solution of the equation
\bel{t4a} P_{A_1,q_1,+}w=e^{-\rho \theta\cdot x'}(-\Delta_{\tilde{A}_1}+q_1)e^{\rho \theta\cdot x'}w=e^{i\rho \eta\cdot x}F_{1,\rho}(x),\quad x\in\Omega,\ee
with $F_{1,\rho}$ defined, for all $x=(x',x_3)\in B'_{r_0+1}\times\R$ (we recall that $B_r'=\{x'\in\R^{2}:\ |x'|<r\}$), by
\bel{t4b}\begin{aligned}F_{1,\rho}(x)&=-e^{-\rho \theta\cdot x'-i\rho \eta\cdot x}(\Delta_{\tilde{A}_1}+q_1)\left[e^{\rho \theta\cdot x'+i\rho \eta\cdot x}\psi\left(\rho^{-\frac{1}{4}}x_3\right)b_1e^{-i\xi\cdot x}\right]\\
&=-\left((|\xi|^2+q_1)\psi\left(\rho^{-\frac{1}{4}}x_3\right)-2i\eta_3\rho^{\frac{3}{4}}\psi'\left(\rho^{-\frac{1}{4}}x_3\right)-2i\xi_3\rho^{-\frac{1}{4}}\psi'\left(\rho^{-\frac{1}{4}}x_3\right)\right)b_1e^{-i\xi\cdot x}\\
&\ \ \  -\left[-\rho^{-\frac{1}{2}}\psi''\left(\rho^{-\frac{1}{4}}x_3\right)b_1- 2\pd_{x_3}b_1\rho^{-\frac{1}{4}}\psi'\left(\rho^{-\frac{1}{4}}x_3\right)+[i2\xi\cdot\nabla b_1-\Delta_{\tilde{A}_1} b_1]\psi\left(\rho^{-\frac{1}{4}}x_3\right)\right]e^{-i\xi\cdot x}.\end{aligned}\ee
Here we have used \eqref{tt} and we consider $q_1$ as a function extended by zero to $\R^3$.
 We fix $\phi\in\mathcal C^\infty_0(B'_{r_0+1};[0,1])$ satisfying $\phi=1$ on $B'_{r_0+\frac{1}{2}}$, and we define $$G_\rho(x',x_3):=\phi(x')F_{1,\rho}(x',x_3),\quad x'\in\R^2,\ x_3\in\R.$$
It is clear that $G_\rho\in L^2(\R^3)$ and in view of \eqref{l1a} and the fact that
$$\norm{\psi\left(\rho^{-\frac{1}{4}}x_3\right)}_{L^2(B'_{r_0+1}\times\R)}+\norm{\psi'\left(\rho^{-\frac{1}{4}}x_3\right)}_{L^2(B'_{r_0+1}\times\R)}+\norm{\psi''\left(\rho^{-\frac{1}{4}}x_3\right)}_{L^2(B'_{r_0+1}\times\R)}\leq C\rho^{\frac{1}{8}},$$
we deduce that
\bel{t4h}\norm{G_\rho}_{L^2(\R^3)}\leq C(|\xi|^2+1)\left(1+\frac{|\xi'|}{|\xi_3|}\right)\rho^{\frac{7}{8}} ,\ee
with $C>0$ depending on $\Omega$ and $M$.
From now on we denote by $C>0$ a constant depending only on $\Omega$ and $M$ that may change from line to line. Applying \eqref{p2a}, we will complete the construction of the remainder term $w_{1,\rho}$ by using a classical duality argument. More precisely, applying \eqref{p2a}, we consider the linear form 
$ T_\rho$ defined on $\mathcal Q:=\{P_{A_1,\overline{q_1},-}w: w\in\mathcal C^\infty_0(\Omega)\}$ by
$$T_\rho(P_{A_1,q_1,-}v):=\overline{\left\langle G_{\rho}, e^{-i\rho \eta\cdot x}v\right\rangle_{H^{-2}_\rho(\R^3), H^2_\rho(\R^3)}},\quad v\in\mathcal C^\infty_0(\Omega).$$
Here and from now on we define the duality bracket $\left\langle \cdot,\cdot\right\rangle_{H^{-2}_\rho(\R^3), H^2_\rho(\R^3)}$ in the complex sense, which means that 
$$\left\langle v,w\right\rangle_{H^{-2}_\rho(\R^3), H^2_\rho(\R^3)}=\left\langle v,w\right\rangle_{L^2(\R^3)}=\int_{\R^3}v\overline{w}dx,\quad v\in L^2(\R^3),\ w\in H^2(\R^3).$$
Applying again \eqref{p2a}, for all $v\in\mathcal C^\infty_0(\Omega)$, we obtain
$$\begin{aligned}|T_\rho(P_{A_1,q_1,-}v)|&\leq \norm{G_{\rho}}_{L^2(\R^3)}\norm{e^{-i\rho \eta\cdot x}v}_{L^2(\R^3)}\\
\ &\leq C\rho\norm{G_{\rho}}_{L^2(\R^3)}\rho^{-1}\norm{v}_{L^2(\R^3)}\\
\ &\leq C\rho\norm{G_{\rho}}_{L^2(\R^3)}\norm{P_{A_1,\overline{q_1},-}v}_{H^{-2}_\rho(\R^3)},\end{aligned}$$
with $C>0$ depending on $\Omega$ and $\norm{q_1}_{L^\infty(\Omega)}+\norm{A_1}_{W^{1,\infty}(\Omega)^3}$. Thus, applying the Hahn-Banach theorem, we deduce that $T_\rho$ admits an extension as a continuous linear form on ${H^{-2}_\rho(\R^3)}$ whose norm will be upper bounded by $C\rho\norm{G_{\rho}}_{L^2(\R^3)}$. Therefore, there exists $w_{1,\rho}\in H^2_\rho(\R^3)$ such that
\bel{t4d}\left\langle P_{A_1,\overline{q_1},-}v, w_{1,\rho}\right\rangle_{H^{-2}_\rho(\R^3), H^2_\rho(\R^3)}=T_\rho(P_{A_1,\overline{q_1},-}v)=\overline{\left\langle G_{\rho}, e^{-i\rho \eta\cdot x}v\right\rangle_{H^{-1}_\rho(\R^3), H^1_\rho(\R^3)}},\quad v\in\mathcal C^\infty_0(\Omega),\ee
\bel{t4e}\norm{w_{1,\rho}}_{H^2_\rho(\R^3)}\leq C\rho\norm{G_{\rho}}_{L^2(\R^3)}.\ee
From \eqref{t4d} and the fact that, for all $x\in\Omega$, $G_\rho(x)=F_{1,\rho}(x)$, we obtain
$$\left\langle  P_{A_1,q_1,+}w_{1,\rho},v\right\rangle_{D'(\Omega), \mathcal C^\infty_0(\Omega)}=\left\langle e^{i\rho \eta\cdot x}F_{1,\rho}, v \right\rangle_{D'(\Omega), \mathcal C^\infty_0(\Omega)}.$$
It follows that $w_{1,\rho}$ solves 
$P_{A_1,q_1,+}w_{1,\rho}=e^{i\rho \eta\cdot x}F_{1,\rho}$ in $\Omega$ and $u_1$ given by \eqref{CGO1} is a solution of $\Delta_{A_1}u+q_1u=0$ in $\Omega$ lying in $H^2(\Omega)$. In addition, from  \eqref{t4e}, we deduce that
$$\rho^{-1}\norm{w_{1,\rho}}_{H^2(\Omega)}+\norm{w_{1,\rho}}_{H^1(\Omega)}+\rho\norm{w_{1,\rho}}_{L^2(\Omega)}\leq C(|\xi|^2+1)\left(1+\frac{|\xi'|}{|\xi_3|}\right)\rho^{\frac{7}{8}}$$
which implies the decay property \eqref{RCGO}. This completes the proof of Theorem \ref{t4}.

\section{Stability results on the whole boundary}

This section is devoted to the proof of our results with full boundary measurements stated in Theorem \ref{t1} and \ref{t2}.

\subsection{Recovery of the magnetic field}
In this subsection we will prove Theorem \ref{t1}. In all this proof $C$ and $c$ will be two positive constants depending only on $\Omega$ and $M$ that may change from line to line.

For $j=1,2$, we fix $u_j\in H^{2}(\Omega)$  a solution of  $\Delta_{A_1} u_1+q_1u_1=0$, $\Delta_{A_2} u_2+\overline{q_2}u_2=0$ in $\Omega$ of the form \eqref{CGO1}-\eqref{CGO2} with $\rho>\rho_2$ and with $w_{j,\rho}$ satisfying \eqref{RCGO}. These solutions satisfy the following property
\begin{lemma}\label{lem3}
There exists $C>0$ such that the following estimates  
 $$\Vert u_1 \Vert_{H^2(\Omega)} \leqslant C  e^{(D+1)\rho} \left(1+\frac{|\xi'|}{|\xi_3|}\right) \Big( 1+|\xi|^2 \Big),$$ $$ \Vert u_2 \Vert_{H^2(\Omega)} \leqslant C  e^{(D+1)\rho} \left(1+\frac{|\xi'|}{|\xi_3|}\right) \Big( 1+|\xi|^2 \Big),$$
hold true for any solutions $u_1$ and $u_2$ given by $(\ref{CGO1})$ and $(\ref{CGO2})$.\\
Here $D:= \underset{x' \in \overline{\omega}}{sup} \vert x'\vert$.
\end{lemma}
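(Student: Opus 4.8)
The plan is to split each of the two solutions into its explicit amplitude and its remainder, to bound the two pieces separately in $H^2(\Omega)$, and to keep careful track of the powers of $\rho$ and of $|\xi|$ that are generated by differentiation. I write $u_1=a_1+r_1$ with
\[
a_1(x)=e^{\rho\theta\cdot x'}\psi\big(\rho^{-\frac14}x_3\big)\,b_1(x)\,e^{i(\rho\eta-\xi)\cdot x},\qquad r_1(x)=e^{\rho\theta\cdot x'}w_{1,\rho}(x),
\]
and analogously $u_2=a_2+r_2$ from \eqref{CGO2}. The guiding principle is that every polynomial factor of $\rho$ that appears can be absorbed into the extra exponential hidden in the gap between $e^{D\rho}$ and $e^{(D+1)\rho}$: since $\rho^N\leq C_N e^\rho$ for every fixed $N$ and all $\rho>0$, it suffices to produce bounds of the form $C e^{D\rho}\,\rho^{N}\,(1+|\xi'|/|\xi_3|)(1+|\xi|^2)$, with $N$ a fixed exponent, and the $\rho^N$ will then be swallowed by the factor $e^\rho$.

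For the amplitude $a_1$ I would apply the Leibniz rule to compute all derivatives up to order two. On $\overline\omega\times\R$ one has $|\theta\cdot x'|\leq|x'|\leq D$, so $|e^{\rho\theta\cdot x'}|\leq e^{D\rho}$, while each derivative of this factor produces $\rho\theta$ and hence contributes at most $\rho^{|\beta|}\leq\rho^2$. The oscillatory factor $e^{i(\rho\eta-\xi)\cdot x}$ has modulus one, and each of its derivatives produces $\rho\eta-\xi$, so two derivatives produce at most $(\rho+|\xi|)^2$ (using $|\eta|=1$). The amplitude $b_1$ together with its first two derivatives is controlled on $B'_{r_0+1}\times\R\supset\overline\Omega$ by \eqref{cond31}, that is by $C(1+|\xi'|/|\xi_3|)$; and $\psi(\rho^{-1/4}\cdot)$ is supported in $\{|x_3|\leq 2\rho^{1/4}\}$, so its $L^2(\R)$-norm is $O(\rho^{1/8})$, its $x_3$-derivatives contributing only smaller (negative) powers of $\rho$. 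Integrating over the bounded cross-section $\omega$ and collecting these bounds gives
\[
\norm{a_1}_{H^2(\Omega)}\leq C e^{D\rho}\Big(1+\tfrac{|\xi'|}{|\xi_3|}\Big)\big(\rho^2+\rho|\xi|+|\xi|^2\big)\rho^{1/8}.
\]
Using $|\xi|\leq 1+|\xi|^2$ for the cross term $\rho|\xi|$ and absorbing $\rho^{2+1/8}$ into $e^\rho$, this is bounded by $Ce^{(D+1)\rho}(1+|\xi'|/|\xi_3|)(1+|\xi|^2)$.

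For the remainder $r_1$ the same Leibniz argument, together with $|\partial^\beta e^{\rho\theta\cdot x'}|\leq\rho^{|\beta|}e^{D\rho}$, gives $\norm{e^{\rho\theta\cdot x'}w_{1,\rho}}_{H^2(\Omega)}\leq C\rho^2 e^{D\rho}\norm{w_{1,\rho}}_{H^2(\Omega)}$. Inserting the decay estimate \eqref{RCGO}, which yields $\norm{w_{1,\rho}}_{H^2(\Omega)}\leq C(1+|\xi|^2)(1+|\xi'|/|\xi_3|)\rho^{15/8}$, produces a bound of the form $Ce^{D\rho}\rho^{31/8}(1+|\xi|^2)(1+|\xi'|/|\xi_3|)$, and absorbing $\rho^{31/8}$ into $e^\rho$ gives the desired bound. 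Adding the two contributions establishes the estimate for $u_1$. The bound for $u_2$ is obtained in exactly the same manner: now the oscillatory factor is $e^{i\rho\eta\cdot x}$, so only powers of $\rho$ (and no factor of $|\xi|$) arise from it, while the remainder $w_{2,\rho}$ satisfies the sharper form of \eqref{RCGO} without the $(1+|\xi|^2)$ prefactor—both facts only improve the estimate.

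The only point requiring genuine care is the bookkeeping in the amplitude estimate: one must verify that no power of $|\xi|$ higher than two is ever created. This is guaranteed because $b_1$ contributes solely the factor $(1+|\xi'|/|\xi_3|)$ through \eqref{cond31}, while the oscillatory exponential contributes at most $(\rho+|\xi|)^2$; every remaining growth is polynomial in $\rho$ and is disposed of by the inequality $\rho^N\leq C_Ne^\rho$. Beyond this bookkeeping there is no analytic obstacle—the whole lemma is a careful product-rule computation combined with the already established bounds \eqref{cond31} and \eqref{RCGO}.
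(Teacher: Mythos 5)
Your proof is correct and follows essentially the same route as the paper's: both split the solutions into the explicit amplitude and the remainder, pull out the factor $e^{D\rho}$, use the support of $\psi(\rho^{-1/4}x_3)$ to gain the $\rho^{1/8}$ from the $L^2$ norm in $x_3$, invoke \eqref{cond31} for $b_j$ and \eqref{RCGO} for $w_{j,\rho}$, and absorb all residual polynomial powers of $\rho$ into the extra factor $e^{\rho}$. The only cosmetic difference is that the paper tracks the individual weighted norms in \eqref{RCGO} term by term while you bound the remainder more crudely by $\rho^{2}\norm{w_{1,\rho}}_{H^2(\Omega)}$, which is equally valid since the polynomial loss is absorbed in the same way.
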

\begin{proof}
Using the expression of $u_1$, we can easily deduce that 
$$\Vert u_1 \Vert_{L^2(\Omega)} \leqslant \Vert e^{\rho\theta\cdot x'} \Vert_{L^\infty(\Omega)} \Vert \psi\big( \rho^{-\frac{1}{4}}x_3 \big) b_1  e^{i\rho\eta\cdot x-i\xi\cdot x} + w_{1,\rho} (x' , x_3) \Vert_{L^2(\Omega)}.$$
Setting $D:= \underset{x' \in \overline{\omega}}{sup} \vert x'\vert$ and using \eqref{RCGO} and \eqref{cond31}, we get
\begin{align*}
\Vert u_1 \Vert_{L^2(\Omega)} & \leqslant C e^{D\rho} \Big( \rho^{\frac{1}{8}} \Vert A_1 \Vert_{W^{2, \infty}(\Omega)^3} \left(1+\frac{|\xi'|}{|\xi_3|}\right) +\rho^{-\frac{1}{8}} \Big( 1+|\xi|^2 \Big) \left(1+\frac{|\xi'|}{|\xi_3|}\right) \Big)   \\&  \leqslant C e^{(D+1)\rho} \left(1+\frac{|\xi'|}{|\xi_3|}\right) \Big( 1+|\xi|^2 \Big).
\end{align*}
By simple computations of $\nabla u_1$ and $\partial_{x_i}\partial_{x_j}$, for $i,j=1,2,3$ and by the same arguments used previously, we obtain
\begin{align*}
\Vert \nabla u_1 \Vert_{L^2(\Omega)} & \leqslant  e^{D\rho} \Big( (\rho + \vert \xi \vert) \Vert \psi \big( \rho^{-\frac{1}{4}}x_3 \big) b_1 \Vert_{L^2(\Omega)} +  \Vert \psi \big( \rho^{-\frac{1}{4}}x_3 \big) \nabla b_1 \Vert_{L^2(\Omega)} + \rho^{-\frac{1}{4}}  \Vert \psi'\big( \rho^{-\frac{1}{4}}x_3 b_1 \big) \Vert_{L^2(\Omega)} \\ & \quad + \rho \Vert  w_{1, \rho} \Vert_{L^2(\Omega)} + \Vert \nabla w_{1, \rho} \Vert_{L^2(\Omega)}\Big) \\
& \leqslant C e^{(D+1)\rho} \left(1+\frac{|\xi'|}{|\xi_3|}\right) \Big( 1+|\xi|^2 \Big)
\end{align*}
and 
\begin{align*}
\Vert \partial_{x_i}\partial_{x_j} u_1 \Vert_{L^2(\Omega)} & \leqslant e^{D\rho} \Big(  (\rho + \vert \xi \vert)^2 \Vert \psi\big( \rho^{-\frac{1}{4}}x_3 \big) b_1 \Vert_{L^2(\Omega)} + \rho^{-\frac{1}{4}}(\rho + \vert \xi \vert) \Vert \psi'\big( \rho^{-\frac{1}{4}}x_3 \big) b_1 \Vert_{L^2(\Omega)} \\ \quad & + (\rho + \vert \xi \vert) \Vert\psi\big( \rho^{-\frac{1}{4}}x_3 \big) \nabla b_1 \Vert_{L^2(\Omega)} +\rho^{-\frac{1}{4}} \Vert \psi'\big( \rho^{-\frac{1}{4}}x_3 \big) \nabla b_1 \Vert_{L^2(\Omega)} + \Vert \psi\big( \rho^{-\frac{1}{4}}x_3 \big) \partial_{x_i}\partial_{x_j} b_1 \Vert_{L^2(\Omega)} \\
& \quad + \rho^{-\frac{1}{2}} \Vert \psi''\big( \rho^{-\frac{1}{4}}x_3 \big) b_1 \Vert_{L^2(\Omega)} + \rho^2 \Vert  w_{1, \rho} \Vert_{L^2(\Omega)} + \rho \Vert \nabla w_{1, \rho} \Vert_{L^2(\Omega)} +  \Vert \partial_{x_i}\partial_{x_j} w_{1, \rho} \Vert_{L^2(\Omega)} \Big) \\
& \leqslant C e^{(D+1)\rho} \left(1+\frac{|\xi'|}{|\xi_3|}\right) \Big( 1+|\xi|^2 \Big)
\end{align*}
In the same way, we get
$$\Vert u_2 \Vert_{L^2(\Omega)} \leqslant C  e^{(c+1)\rho} \left(1+\frac{|\xi'|}{|\xi_3|}\right) \Big( 1+|\xi|^2 \Big) $$  and $$ \qquad \Vert  u_2 \Vert_{H^2(\Omega)} \leqslant C  e^{(c+1)\rho} \left(1+\frac{|\xi'|}{|\xi_3|}\right) \Big(  1+|\xi|^2 \Big) . $$
This completes the proof.
\end{proof}

Fixing $q=q_1-q_2$ extended by zero to an element of $ L^\infty(\R^3)$ and applying a classical integration by parts argument, we deduce the following identity
\bel{tt1aa}\begin{aligned}&\left\langle (\Lambda_{A_1,q_1}- \Lambda_{A_2,q_2})u_1, u_2\right\rangle_{L^2(\partial\Omega )}
\ &=i\int_{\R^3}(A\cdot\nabla u_1)\overline{u_2}dx -i\int_{\R^3}u_1(\overline{A\cdot\nabla u_2})dx+\int_{\R^3}\tilde{q}u_1\overline{u_2}dx,\end{aligned}\ee
where $\tilde{q}=|A_2|^2-|A_1|^2+q$. 

By simple computations, we get
\begin{align*}
\nabla u_1 \overline{u}_2 &- u_1 \nabla \overline{u}_2 \\ &= 2 \rho (\overset{\sim}{\theta} + i \eta ) \psi\left(\rho^{-\frac{1}{4}}x_3\right)^2 b_1 \overline{b}_2 e^{-ix\cdot\xi} + \rho (2\overset{\sim}{\theta} + i \eta ) \psi\left(\rho^{-\frac{1}{4}}x_3\right) \left( b_1 e^{i \rho x \cdot \eta - i \xi \cdot x } w_{2, \rho} + \overline{b}_2 e^{-i \rho x \cdot \eta  } w_{1, \rho}\right)
\\ & - i \xi  \psi\left(\rho^{-\frac{1}{4}}x_3\right) b_1 \left(  \psi\left(\rho^{-\frac{1}{4}}x_3\right) \overline{b}_2 e^{-ix\cdot\xi} + e^{i \rho x \cdot \eta - i \xi \cdot x } w_{2, \rho} \right) + \psi\left(\rho^{-\frac{1}{4}}x_3\right)^2 e^{-ix\cdot\xi} \left( \overline{b}_2 \nabla b_1 - b_1 \nabla \overline{b}_2  \right) \\
& + \psi\left(\rho^{-\frac{1}{4}}x_3\right) \left[  e^{i \rho x \cdot \eta - i \xi \cdot x } \left( \nabla b_1 w_{2, \rho} - b_1 \nabla w_{2, \rho} \right) + e^{-i \rho x \cdot \eta  } \left( \overline{b}_2 \nabla w_{1, \rho} - \nabla\overline{b}_2 w_{1, \rho}\right) \right] \\
& + \rho^{-\frac{1}{4}} \partial_{x_3} \psi\left(\rho^{-\frac{1}{4}}x_3\right) \left( b_1 e^{i \rho x \cdot \eta - i \xi \cdot x }w_{2, \rho} - b_2 e^{-i \rho x \cdot \eta  }w_{1, \rho}  \right) + 2 \rho \overset{\sim}{\theta} w_{1, \rho} w_{2, \rho} + \nabla w_{1, \rho}  w_{2, \rho} - w_{1, \rho} \nabla w_{2, \rho} .
\end{align*}

According to \eqref{RCGO},  \eqref{l1a}, Lemma \ref{lem3} and the fact that $A\in L^1(\R^3)$, multiplying  this expression by $-i\rho^{-1}2^{-1}$, we find
\bel{t1aa}\begin{aligned}&\abs{\int_{\R^3}(A\cdot(\tilde{\theta}+i\eta))\psi\left(\rho^{-\frac{1}{4}}x_3\right)^2\exp\left(\Phi_1+\overline{\Phi_2}\right)e^{-ix\cdot\xi}dx}\\
&\leq C\rho^{-1}\abs{\int_{\Omega}\left(\psi\left(\rho^{-\frac{1}{4}}x_3\right)b_1e^{i\rho x\cdot\eta-i\xi\cdot x}+w_{1,\rho}(x',x_3)\right)\left(\psi\left(\rho^{-\frac{1}{4}}x_3\right)b_2 e^{i\rho x\cdot\eta}+w_{2,\rho}(x',x_3)\right)dx}\\
&\ \ \  +C\left(1+\frac{|\xi'|}{|\xi_3|}\right)^2(1+|\xi|^2)\left[\rho^{-\frac{1}{4}}+\norm{\Lambda_{A_1,q_1}- \Lambda_{A_2,q_2}}_{\mathcal B(H^{\frac{3}{2}}(\partial\Omega),L^2(\partial\Omega))}e^{c\rho}\right]\\
&\leq C\left(1+\frac{|\xi'|}{|\xi_3|}\right)^2(1+|\xi|^2)\left[\rho^{-\frac{1}{4}}+\norm{\Lambda_{A_1,q_1}- \Lambda_{A_2,q_2}}_{\mathcal B(H^{\frac{3}{2}}(\partial\Omega),L^2(\partial\Omega))}e^{c\rho}\right],\end{aligned}\ee
where $$c=2(\sup_{x'\in\omega}|x'|+1).$$ 
On the other hand, using \eqref{t1b}, \eqref{l1b} and the fact that $\psi=1$ on $[-1,1]$ and $0\leq\psi\leq1$, we get
$$\begin{aligned}&\abs{\int_{\R^3}(A\cdot(\tilde{\theta}+i\eta))\psi\left(\rho^{-\frac{1}{4}}x_3\right)^2\exp\left(\Phi_1+\overline{\Phi_2}\right)e^{-ix\cdot\xi}dx}\\
&\geq \abs{\int_{\R^3}(A\cdot(\tilde{\theta}+i\eta))\exp\left(\Phi_1+\overline{\Phi_2}\right)e^{-ix\cdot\xi}dx}-C\int_{\R^3}\abs{A}(1-\psi\left(\rho^{-\frac{1}{4}}x_3\right)^2)dx\\
&\geq \abs{\int_{\R^3}(A\cdot(\tilde{\theta}+i\eta))\exp\left(\Phi_1+\overline{\Phi_2}\right)e^{-ix\cdot\xi}dx}-C\int_{\R^2}\int_{|x_3|\geq\rho^{\frac{1}{4}}}\left\langle x_3\right\rangle^{-s}(\left\langle x_3\right\rangle^s\abs{A})dx_3dx'\\
&\geq \abs{\int_{\R^3}(A\cdot(\tilde{\theta}+i\eta))\exp\left(\Phi_1+\overline{\Phi_2}\right)e^{-ix\cdot\xi}dx}-C\rho^{-\frac{s}{4}}\int_\Omega\left\langle x_3\right\rangle^s(\abs{A_1}+\abs{A_2})dx\\
&\geq \abs{\int_{\R^3}(A\cdot(\tilde{\theta}+i\eta))\exp\left(\Phi_1+\overline{\Phi_2}\right)e^{-ix\cdot\xi}dx}-2CM\rho^{-\frac{s}{4}}.\end{aligned}$$
Combining this with \eqref{t1aa}, we obtain 
\bel{t1bb}\begin{aligned}&\abs{\int_{\R^3}(A\cdot(\tilde{\theta}+i\eta)) \exp\left(\Phi_1+\overline{\Phi_2}\right)e^{-ix\cdot\xi}dx}\\ 
& \leq \abs{\int_{\R^3}(A\cdot(\tilde{\theta}+i\eta))\psi \left(\rho^{-\frac{1}{4}}x_3\right)^2\exp\left(\Phi_1+\overline{\Phi_2}\right)e^{-ix\cdot\xi}dx}+2CM\rho^{-\frac{s}{4}}    \\
&\leq C\left(1+\frac{|\xi'|}{|\xi_3|}\right)^2(1+|\xi|^2)\left[\rho^{-\frac{s}{4}}+\norm{\Lambda_{A_1,q_1}- \Lambda_{A_2,q_2}}_{\mathcal B(H^{\frac{3}{2}}(\partial\Omega),L^2(\partial\Omega))}e^{c\rho}\right]+2CM\rho^{-\frac{s}{4}}.\end{aligned}\ee 
Now let us observe that
$$\Phi:=\Phi_1+\overline{\Phi_2}=\frac{-i}{2\pi} \int_{\R^2} \frac{(\tilde{\theta}+i\eta)\cdot A(x-s_1\tilde{\theta}-s_2\eta)}{s_1+is_2}ds_1ds_2.$$
Therefore, we have 
$$\abs{\int_{\R^3}(A\cdot(\tilde{\theta}+i\eta))e^\Phi e^{-ix\cdot\xi}dx}\leq C\left(1+\frac{|\xi'|}{|\xi_3|}\right)^2(1+|\xi|^2)\left[\rho^{-\frac{s}{4}}+\norm{\Lambda_{A_1,q_1}- \Lambda_{A_2,q_2}}_{\mathcal B(H^{\frac{3}{2}}(\partial\Omega),L^2(\partial\Omega))}e^{c\rho}\right].$$
Applying \cite[Lemma 4.1]{Ki5}, we deduce from the above estimate that
\bel{t1cc}\abs{(\tilde{\theta}+i\eta)\cdot\mathcal F(A)(\xi)}\leq C\left(1+\frac{|\xi'|}{|\xi_3|}\right)^2(1+|\xi|^2)\left[\rho^{-\frac{s}{4}}+\norm{\Lambda_{A_1,q_1}- \Lambda_{A_2,q_2}}_{\mathcal B(H^{\frac{3}{2}}(\partial\Omega),L^2(\partial\Omega))}e^{c\rho}\right].\ee
In the same way, replacing $\eta$ by $-\eta$ in the construction of the CGO $u_j$, $j=1,2$, we obtain
$$\abs{(\tilde{\theta}-i\eta)\cdot\mathcal F(A)(\xi)}\leq C\left(1+\frac{|\xi'|}{|\xi_3|}\right)^2(1+|\xi|^2)\left[\rho^{-\frac{s}{4}}+\norm{\Lambda_{A_1,q_1}- \Lambda_{A_2,q_2}}_{\mathcal B(H^{\frac{3}{2}}(\partial\Omega),L^2(\partial\Omega))}e^{c\rho}\right].$$
Combining these two estimates with the fact that $(\tilde{\theta},\eta)$ is an orthonormal basis of $\xi^\bot=\{y\in\R^3:\ y\cdot\xi=0\}$, we find 

\bel{t1dd}\abs{\zeta\cdot\mathcal F(A)(\xi)}\leq C|\zeta|\left(1+\frac{|\xi'|}{|\xi_3|}\right)^2(1+|\xi|^2)\left[\rho^{-\frac{s}{4}}+\norm{\Lambda_{A_1,q_1}- \Lambda_{A_2,q_2}}_{\mathcal B(H^{\frac{3}{2}}(\partial\Omega),L^2(\partial\Omega))}e^{c\rho}\right],\quad \zeta\in\xi^\bot.\ee
Moreover, for $1\leq j<k\leq3$, fixing $\zeta=\xi_ke_j-\xi_j e_k,$
with $$e_j=(0,\ldots,0,\underbrace{1}_{\textrm{position\ } j},0,\ldots0),\quad e_k=(0,\ldots,0,\underbrace{1}_{\textrm{position } k},0,\ldots0),$$ \eqref{t1dd} implies
\bel{t1f}\abs{\xi_k \mathcal F(a_j)(\xi)-\xi_j \mathcal F(a_k)(\xi)}\leq C\left(1+\frac{|\xi'|}{|\xi_3|}\right)^2(1+|\xi|^3)\left[\rho^{-\frac{s}{4}}+\norm{\Lambda_{A_1,q_1}- \Lambda_{A_2,q_2}}_{\mathcal B(H^{\frac{3}{2}}(\partial\Omega),L^2(\partial\Omega))}e^{c\rho}\right],\ee
where $A=(a_1,a_2,a_3)$. Recall that so far, we have proved \eqref{t1f} for any $\xi=(\xi',\xi)\in\R^2\times\R$ with $\xi'\neq0$ and $\xi_3\neq0$. Then, we deduce from \eqref{t1f} that
$$\abs{\mathcal F(\pd_{x_k}a_j-\pd_{x_j}a_k)(\xi)}\leq C\left(1+\frac{|\xi'|}{|\xi_3|}\right)^2(1+|\xi|^3)\left[\rho^{-\frac{s}{4}}+\norm{\Lambda_{A_1,q_1}- \Lambda_{A_2,q_2}}_{\mathcal B(H^{\frac{3}{2}}(\partial\Omega),L^2(\partial\Omega))}e^{c\rho}\right] .$$
From now on we fix $R>1$, $\gamma:= \norm{\Lambda_{A_1,q_1}- \Lambda_{A_2,q_2}}_{\mathcal B(H^{\frac{3}{2}}(\partial\Omega),L^2(\partial\Omega))}$ and we consider the set
$$D_R=\{\xi=(\xi_1,\xi_2,\xi_3)\in \R^3:\ |\xi|\leq R,\ |\xi_3|\geq R^{-4}\}.$$
We obtain the estimate
$$\abs{\mathcal F(\pd_{x_k}a_j-\pd_{x_j}a_k)(\xi)}\leq C(R^{13}\rho^{-\frac{s}{4}}+R^{13}\gamma e^{c\rho}),\quad \xi\in D_R.$$
It follows that
\bel{t1ee}\int_{D_R}\abs{\mathcal F(\pd_{x_k}a_j-\pd_{x_j}a_k)(\xi)}^2d\xi \leq C(R^{29}\rho^{-\frac{s}{2}}+R^{29}\gamma^2 e^{2c\rho}).\ee
On the other hand, using the fact that $A_1-A_2\in W^{1,1}(\Omega)^3$ satisfies \eqref{t1a}, we obtain
$$\norm{\mathcal F(\pd_{x_k}a_j-\pd_{x_j}a_k)}_{L^\infty(\R^3)}\leq 2\norm{A}_{W^{1,1}(\R^3)^3}\leq 2\norm{A_1-A_2}_{W^{1,1}(\Omega)^3}\leq 2M.$$
Therefore, we have 
\bel{t1ff}\int_{B_R\setminus D_R}\abs{\mathcal F(\pd_{x_k}a_j-\pd_{x_j}a_k)(\xi)}^2d\xi\leq 4M^2\int_{-R^{-4}}^{R^{-4}}\int_{B'_R}d\xi'd\xi_3\leq CR^{-2}.\ee
In the same way, using the fact that $A_j\in H^2(\Omega)^3$ and applying \eqref{t1a}, we deduce that $A\in H^2(\R^3)^3$ and \eqref{t1b} implies that 
$$\norm{A}_{H^2(\R^3)^3}\leq 2M.$$
Applying this estimate we deduce that 
\bel{t1gg}\begin{aligned}\int_{\R^3\setminus B_R}\abs{\mathcal F(\pd_{x_k}a_j-\pd_{x_j}a_k)(\xi)}^2d\xi&\leq CR^{-2}\int_{\R^3}(1+|\xi|^2)\abs{\mathcal F(\pd_{x_k}a_j-\pd_{x_j}a_k)(\xi)}^2d\xi\\
&\leq 2CMR^{-2}.\end{aligned}\ee
Combining \eqref{t1ee}-\eqref{t1gg}, we get
$$\int_{\R^3}\abs{\mathcal F(\pd_{x_k}a_j-\pd_{x_j}a_k)(\xi)}^2d\xi\leq C(R^{-2}+R^{29}\rho^{-\frac{s}{2}}+R^{29}\gamma^2 e^{2c\rho})$$
and by Plancherel formula, it follows
$$\norm{dA}_{L^2(\Omega)}\leq C(R^{-1}+R^{29/2}\rho^{-\frac{s}{4}}+R^{29/2}\gamma e^{c\rho}).$$
Choosing $R=\rho^{\frac{s}{62}}$, we get
\begin{align}\label{32}
\norm{dA}_{L^2(\Omega)}&\leq C(\rho^{-\frac{s}{62}}+(\rho^{-\frac{s}{62}})^{29/2}\gamma e^{c\rho}) \nonumber \\ &\leq C(\rho^{-\frac{s}{62}}+\gamma e^{(c+1)\rho}).
\end{align}
Now, let us recall a classical result already stated in \cite{So}.
\begin{lemma}\label{min}
Let $a \in (0,1]$ and $b>0$. Then, there exists $C>0$ depending only on $b$, such that $$ \underset{\rho >1}{inf } \quad \rho^{-\frac{s}{62}}+a e^{b\rho} \leqslant C (\log(3+a^{-1}))^{-\frac{s}{62}} .$$
\end{lemma}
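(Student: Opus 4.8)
The plan is to exhibit one near-optimal value of $\rho$ rather than solve the optimization exactly, which lets us avoid any case analysis. Write $\kappa := \tfrac{s}{62} > 0$, so the quantity to estimate is $\inf_{\rho>1}\bigl(\rho^{-\kappa} + a\,e^{b\rho}\bigr)$ and we want an upper bound of the form $C\bigl(\log(3+a^{-1})\bigr)^{-\kappa}$. The guiding heuristic is that the two terms should be balanced: $a\,e^{b\rho}$ becomes small precisely when $\rho \gtrsim \tfrac{1}{b}\log(a^{-1})$, whereas $\rho^{-\kappa}$ decays only polynomially, so the dominant contribution at the balancing point will come from $\rho^{-\kappa}$.

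Concretely, first I would fix
\[
\rho := 1 + \frac{1}{2b}\log\!\left(3 + a^{-1}\right),
\]
which satisfies $\rho > 1$ for every $a \in (0,1]$ and is therefore admissible in the infimum. Since $\inf_{\rho>1}\bigl(\rho^{-\kappa}+a\,e^{b\rho}\bigr)$ is bounded above by the value of $\rho^{-\kappa}+a\,e^{b\rho}$ at this particular $\rho$, it suffices to estimate the two terms separately for this choice.

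Second, I would bound the first term using $\rho \geq \tfrac{1}{2b}\log(3+a^{-1})$, which gives $\rho^{-\kappa} \leq (2b)^{\kappa}\bigl(\log(3+a^{-1})\bigr)^{-\kappa}$. For the second term I would compute $e^{b\rho} = e^{b}\,(3+a^{-1})^{1/2}$, whence $a\,e^{b\rho} = e^{b}\,(3a^2 + a)^{1/2} \leq 2e^{b}\,a^{1/2}$, using $3a^2 + a \leq 4a$ for $a \in (0,1]$. The additive constant $1$ in the definition of $\rho$ and the shift $3+a^{-1}$ (rather than $a^{-1}$) are exactly what guarantee admissibility $\rho>1$ and regularity at the endpoint $a=1$.

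The remaining, and only genuinely non-trivial, point is to absorb $2e^{b}\,a^{1/2}$ into $C\bigl(\log(3+a^{-1})\bigr)^{-\kappa}$. This reduces to showing that $a \mapsto a^{1/2}\bigl(\log(3+a^{-1})\bigr)^{\kappa}$ is bounded on $(0,1]$, and this is where the fact that the logarithm grows more slowly than any positive power enters: the function is continuous on $(0,1]$ and tends to $0$ as $a \to 0^+$, so it attains a finite supremum $C_1$. Hence $a\,e^{b\rho} \leq 2e^{b}C_1\bigl(\log(3+a^{-1})\bigr)^{-\kappa}$, and adding the two bounds yields the claim with $C = (2b)^{\kappa} + 2e^{b}C_1$, depending only on $b$ (for the fixed global exponent $s$). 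I expect the boundedness of $a^{1/2}\bigl(\log(3+a^{-1})\bigr)^{\kappa}$ to be the main obstacle in the sense that it is the one step requiring an actual argument rather than algebra; keeping the dependence of $C$ on $b$ explicit throughout is the secondary bookkeeping point to watch.
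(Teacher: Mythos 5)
Your proposal is correct. Note, however, that the paper itself does not prove this lemma at all: it simply quotes it as ``a classical result already stated in \cite{So}''. So there is no internal proof to compare against; your argument supplies exactly the standard balancing proof that the citation hides, namely plugging in the near-optimal admissible value $\rho = 1 + \frac{1}{2b}\log(3+a^{-1})$ and checking that both terms are then dominated by a multiple of $(\log(3+a^{-1}))^{-s/62}$. All three steps check out: the choice is admissible since $\log(3+a^{-1})\geq\log 4>0$; the algebra $a\,e^{b\rho}=e^{b}(3a^{2}+a)^{1/2}\leq 2e^{b}a^{1/2}$ is valid on $(0,1]$; and the boundedness of $a\mapsto a^{1/2}(\log(3+a^{-1}))^{\kappa}$ follows from continuity together with the limit $0$ at $a\to 0^{+}$. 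One small point of bookkeeping: your constant $C_{1}$, hence $C$, depends on $\kappa=s/62$ as well as on $b$, whereas the lemma claims dependence on $b$ only. In the paper's context $s$ is fixed, so this is harmless (indeed all the paper's constants depend on $s$), but if you want the statement literally as written you can make $C$ uniform in $s\in(0,1/2)$ by observing that $\log(3+a^{-1})\geq\log 4>1$, so $(\log(3+a^{-1}))^{\kappa}\leq(\log(3+a^{-1}))^{1/124}$ and likewise $(2b)^{\kappa}\leq\max\bigl(1,(2b)^{1/124}\bigr)$, which removes the $\kappa$-dependence from both constants.
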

Combining \eqref{32} with Lemma \ref{min}, for $\gamma \leqslant 1$, we obtain 
\begin{equation}\label{3333}
\Vert dA \Vert_{L^2(\R^3)} \leqslant C (\log (3 +\gamma^{-1}))^{-\frac{s}{62}} .
\end{equation}
In the same way, for $\gamma \geqslant 1$, we have 
\begin{align*}
\Vert dA \Vert_{L^2(\R^3)} & \leqslant 2M \log (4)^{\frac{s}{62}}(\log (3 +\gamma^{-1}))^{-\frac{s}{62}} \\
& \leqslant C (\log (3 +\gamma^{-1}))^{-\frac{s}{62}}.
\end{align*}
Combining this estimate with \eqref{3333}, we deduce that \eqref{t1d} holds true for $\gamma >0$.
\\
For $\gamma = 0 $, $\eqref{32} $ implies that $\Vert dA \Vert_{L^2(\R^3)} \leqslant C \rho^{-\frac{s}{62}}$. Since $\rho >1$ is arbitrary, we can send $\rho $ to $+\infty$ and deduce \eqref{t1d} for $\gamma =0$.
This completes the proof.
\subsection{Recovery of the electric potential}
In this subsection we assume that \eqref{t1a}-\eqref{t2a} hold true and we will show \eqref{t2b}. In all this proof $C$ and $c$ will be two positive constants depending only on $\Omega$ and $M$ that may change from line to line. For this purpose, we start by proving the following estimate
\bel{t2d}\norm{A_1-A_2}_{L^2(\Omega)^3}\leq C\ln\left(3+\norm{\Lambda_{A_1,q_1}-\Lambda_{A_2,q_2}}_{\mathcal B(H^{\frac{3}{2}}(\partial\Omega),L^2(\partial\Omega))}^{-1}\right)^{-r_1},\ee
with $r_1>0$ depending only on $s$.
For this purpose, let us fix $\xi\in\R^3\setminus\{0\}$ and consider $\eta_1,\eta_2\in\mathbb S^2$ such that $\{\xi/|\xi|,\eta_1,\eta_2\}$ is an orthonormal basis of $\R^3$. Using the notation of the previous section, we deduce that
$$\mathcal F(A)(\xi)=\frac{(\mathcal F(A)(\xi)\cdot\xi)\xi}{|\xi|^2}+(\mathcal F(A)(\xi)\cdot\eta_1)\eta_1+(\mathcal F(A)(\xi)\cdot\eta_2)\eta_2.$$
However, from condition \eqref{t1a}, we deduce that $A\in H^1(\R^3)^3$ and condition \eqref{t2a} implies that
 $$\mathcal F(A)(\xi)\cdot\xi=-i\mathcal F(\textrm{div}(A))(\xi)=-i\int_\Omega [\textrm{div}(A_1)-\textrm{div}(A_2)]e^{-ix\cdot\xi}dx=0.$$
Thus, we have 
$$\mathcal F(A)(\xi)=(\mathcal F(A)(\xi)\cdot\eta_1)\eta_1+(\mathcal F(A)(\xi)\cdot\eta_2)\eta_2$$
and applying \eqref{t1dd}, we deduce that
$$\abs{\mathcal F(A)(\xi)}\leq C\left(1+\frac{|\xi'|}{|\xi_3|}\right)^2(1+|\xi|^2)\left[\rho^{-\frac{s}{4}}+\norm{\Lambda_{A_1,q_1}- \Lambda_{A_2,q_2}}_{\mathcal B(H^{\frac{3}{2}}(\partial\Omega),L^2(\partial\Omega))}e^{c\rho}\right].$$
Combining this estimate with the arguments used at the end of the proof of Theorem \ref{t1} we deduce \eqref{t2d}.

Applying estimate \eqref{l3a} (see Lemma \ref{l3} in the Appendix) and \eqref{t2d}, we obtain
\bel{t2e}\begin{aligned}\norm{A_1-A_2}_{L^\infty(\Omega)^3}&\leq C\norm{A_1-A_2}_{W^{1,\infty}(\Omega)^3}^{\frac{3}{5}}\norm{A_1-A_2}_{L^2(\Omega)^3}^{\frac{2}{5}}\\
\ &\leq C(2M)^{\frac{3}{5}} \norm{A_1-A_2}_{L^2(\Omega)^3}^{\frac{2}{5}}\\
\ &\leq C\ln\left(3+\norm{\Lambda_{A_1,q_1}-\Lambda_{A_2,q_2}}_{\mathcal B(H^{\frac{3}{2}}(\partial\Omega),L^2(\partial\Omega))}^{-1}\right)^{-r_2},\end{aligned}\ee
with $r_2>0$ depending only on $s$.
Using the above estimate, we will now complete the proof of Theorem \ref{t2}. For this purpose, applying \eqref{tt1aa} and the estimates \eqref{l1a}, \eqref{l1b}, we obtain

\bel{t2f}\begin{aligned}&\abs{\int_{\R^3}q(x)\psi\left(\rho^{-\frac{1}{4}}x_3\right)^2\exp\left(\Phi_1+\overline{\Phi_2}\right)e^{-ix\cdot\xi}dx}\\
&\leq C\left(1+\frac{|\xi'|}{|\xi_3|}\right)^2(1+|\xi|^2)\left[\rho\ln\left(3+\norm{\Lambda_{A_1,q_1}-\Lambda_{A_2,q_2}}_{\mathcal B(H^{\frac{3}{2}}(\partial\Omega),L^2(\partial\Omega))}^{-1}\right)^{-r_2}\right.\\
&\left.\ \ \ +\rho^{-\frac{1}{4}}+\norm{\Lambda_{A_1,q_1}- \Lambda_{A_2,q_2}}_{\mathcal B(H^{\frac{3}{2}}(\partial\Omega),L^2(\partial\Omega))}e^{c\rho}\right].\end{aligned}\ee
Recalling that
$$\Phi_1+\overline{\Phi_2}=\frac{-i}{2\pi} \int_{\R^2} \frac{(\tilde{\theta}+i\eta)\cdot (A_1-A_2)(x-s_1\tilde{\theta}-s_2\eta)}{s_1+is_2}ds_1ds_2$$
and repeating the arguments used in Lemma \ref{l1}, we deduce that
$$\norm{\Phi_1+\overline{\Phi_2}}_{L^\infty(\Omega)}\leq C \norm{A_1-A_2}_{L^\infty(\Omega)^3}|(\eta_1,\eta_2)|^{-1}\leq C\left(1+\frac{|\xi'|}{|\xi_3|}\right)\norm{A_1-A_2}_{L^\infty(\Omega)^3}.$$
Moreover, applying the mean value theorem, we obtain
$$\left|\exp\left(\Phi_1+\overline{\Phi_2}\right)-1\right|\leq e^{c\|A_1-A_2\|_{L^\infty(\Omega)^3}}\|\Phi_1+\overline{\Phi_2}\|_{L^\infty(\Omega)}\leq e^{2cM}C \left(1+\frac{|\xi'|}{|\xi_3|}\right) \|A_1-A_2\|_{L^\infty(\Omega)^3}.$$
Combining this with \eqref{t2e}, we obtain
\bel{t2ee}\|\exp\left(\Phi_1+\overline{\Phi_2}\right)-1\|_{L^\infty(\Omega)}\leq C\left(1+\frac{|\xi'|}{|\xi_3|}\right)\ln\left(3+\norm{\Lambda_{A_1,q_1}-\Lambda_{A_2,q_2}}_{\mathcal B(H^{\frac{3}{2}}(\partial\Omega),L^2(\partial\Omega))}^{-1}\right)^{-r_2}.\ee
By inserting $\displaystyle\int_{\R^3} q(x)\psi\left(\rho^{-\frac{1}{4}}x_3\right)^2  e^{-i\xi\cdot x}  \, dx $, we get 
\begin{align*}
&\int_{\R^3}q(x)\psi\left(\rho^{-\frac{1}{4}}x_3\right)^2\exp\left(\Phi_1+\overline{\Phi_2}\right)e^{-ix\cdot\xi}dx \\ &= \int_{\R^3} q(x)\psi\left(\rho^{-\frac{1}{4}}x_3\right)^2  e^{-i\xi\cdot x}  \, dx + \int_{\R^3} q(x)  \psi^2\big( \rho^{-\frac{1}{4}}x_3 \big)\Big(\exp\left(\Phi_1+\overline{\Phi_2}\right) - 1\Big) e^{-i\xi\cdot x}  \, dx.
\end{align*}
It follows that
$$\begin{aligned}&\abs{\displaystyle\int_{\R^3} q(x)\psi\left(\rho^{-\frac{1}{4}}x_3\right)^2  e^{-i\xi\cdot x}  \, dx}\\
&\leq \abs{\int_{\R^3}q(x)\psi\left(\rho^{-\frac{1}{4}}x_3\right)^2\exp\left(\Phi_1+\overline{\Phi_2}\right)e^{-ix\cdot\xi}dx}+\norm{q}_{L^1(\Omega)}\|\exp\left(\Phi_1+\overline{\Phi_2}\right)-1\|_{L^\infty(\Omega)}\\
&\leq C\left(1+\frac{|\xi'|}{|\xi_3|}\right)^2(1+|\xi|^2)\left[\rho\ln\left(3+\norm{\Lambda_{A_1,q_1}-\Lambda_{A_2,q_2}}_{\mathcal B(H^{\frac{3}{2}}(\partial\Omega),L^2(\partial\Omega))}^{-1}\right)^{-r_2}\right.\\
&\left.\ \ \ +\rho^{-\frac{1}{4}}+\norm{\Lambda_{A_1,q_1}- \Lambda_{A_2,q_2}}_{\mathcal B(H^{\frac{3}{2}}(\partial\Omega),L^2(\partial\Omega))}e^{c\rho}\right].\end{aligned}$$
Combining this estimate with the arguments used in the proof Theorem \ref{t1} and in \cite[Theorem 1.1]{So}, one can check that the estimate \eqref{t2b} holds true.

\section{Stability results from measurements on some subset of the boundary}
This section is devoted to the proof of Theorem \ref{st3} and Theorem \ref{st4} by using an approach inspired by Ben Joud in \cite{B}. We will only prove Theorem \ref{st3} and we refer the reader to \cite[Theorem 1.2]{So} for the proof of Theorem \ref{st4}. For this purpose, for $j = 1, 2$, we fix $A_j \in \mathcal{A}(M, A_0, \mathcal{O}_0)$ and $q_j \in \mathcal{Q}(M, q_0, \mathcal{O}_0)$ and we consider again CGO solutions taking the form
$$u_1(x',x_3)=e^{\rho \theta\cdot x'}\left(\psi\left(\rho^{-\frac{1}{4}}x_3\right)b_1e^{i\rho x\cdot\eta-i\xi\cdot x}+w_{1,\rho}(x',x_3)\right),\quad x'\in\omega,\ x_3\in\R,$$
$$ u_2(x',x_3)=e^{-\rho \theta\cdot x'}\left(\psi\left(\rho^{-\frac{1}{4}}x_3\right)b_2 e^{i\rho x\cdot\eta}+w_{2,\rho}(x',x_3)\right),\quad x'\in\omega,\ x_3\in\R,$$
where $w_{j,\rho}\in H^2(\Omega)$ satisfies the decay property 
$$\rho^{-1}\norm{w_{1,\rho}}_{H^2(\Omega)}+\norm{w_{1,\rho}}_{H^1(\Omega)}+\rho\norm{w_{1,\rho}}_{L^2(\Omega)}\leq C(|\xi|^2+1)\left(1+\frac{|\xi'|}{|\xi_3|}\right)\rho^{\frac{7}{8}},$$
$$\rho^{-1}\norm{w_{2,\rho}}_{H^2(\Omega)}+\norm{w_{2,\rho}}_{H^1(\Omega)}+\rho\norm{w_{2,\rho}}_{L^2(\Omega)}\leq C\left(1+\frac{|\xi'|}{|\xi_3|}\right)\rho^{\frac{7}{8}}.$$
In view of Lemma \ref{lem3}, we have \begin{equation}\label{prp}
\Vert u_j \Vert_{H^2(\Omega)} \leqslant C  e^{(D+1)\rho} \left(1+\frac{|\xi'|}{|\xi_3|}\right) \Big(  1+|\xi|^2 \Big) ; \qquad j=1,2
\end{equation}
with $D:= \underset{x' \in \overline{\omega}}{sup} \vert x'\vert$. \\
We recall also that since $q:= q_1-q_2 = 0 $ in $\mathcal{O}_{0} $, we can extend $q$ to $H^1(\R^3)$  by assigning it the value $0$ outside of $\Omega$ and we denote by $q$ this extension. In this part, We need to set $\mathcal{W}_j$ ; $j=1,2,3$ such that
$$\overline{\mathcal{W}}_{j+1} \subset \mathcal{W}_j, \quad \overline{\mathcal{W}}_j \subset \mathcal{W}_0 \quad \text{and} \quad \partial \omega \subset \partial \mathcal{W}_j. $$ 
Let $\mathcal{O}_j = \mathcal{W}_j \times \R $ for $j=0,1,2,3 $.
The main idea of the proofs of Theorem \ref{st3} and Theorem \ref{st4} is to combine the estimate of the Fourier transform of $dA$ and $q$ with the weak unique continuation property which is given in the following lemma.
\begin{lemma}\label{UCP}
Let $A_1 \in \mathcal{C}^2( \overline{\Omega })$, $q_1 \in L^\infty (\Omega)$ and $M>0$ such that $\Vert q \Vert_{L^\infty (\Omega)} \leqslant M$ and let $w\in H^2(\Omega)$ solve
\begin{equation}\label{eq11}
\left\lbrace
\begin{array}{l}
\text{$(-\Delta_{A_1} + q_1)w(x) = F(x) \quad \quad \quad \, \quad \; $ in $ \Omega$, } \\
\text{$w=0 \quad \quad \quad  \quad \quad \quad \quad \quad \quad \quad  \quad \quad $ on $ \partial \Omega, $ }
\end{array}\right.
\end{equation}
where $F \in L^2(\Omega)$. Then, there exist positive constants $C$, $\alpha_1$, $\alpha_2$ and $\lambda_0$ such that we have the following estimate
\begin{equation}\label{eq12}
\Vert w \Vert_{H^1(\mathcal{O}_2 \backslash \mathcal{O}_3 )} \leqslant C \Big( e^{-\lambda \alpha_1} \Vert w \Vert_{H^2(\Omega)} + e^{\lambda \alpha_2} \Big( \Vert \partial_\nu w \Vert_{L^{2}(\Gamma_0 \times \R)} + \Vert F \Vert_{L^2(\mathcal{O}_0)} \Big) \Big)
\end{equation}
for any $\lambda \geqslant \lambda_0$. Here, the constants $C$, $\alpha_1$ and $\alpha_2$ depend on $\Omega$, $M$, $\lambda_0$, $\mathcal{O}_j$ and they are independent of $A_1$, $q_1$, $F$, $w$ and $\lambda$.
\end{lemma}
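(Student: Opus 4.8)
The plan is to read \eqref{eq12} as a \emph{quantitative (conditional) unique continuation} inequality and to prove it from a Carleman estimate with explicit boundary terms, combined with a cut-off localization. The decisive structural simplification is that $\Omega$, $\mathcal{O}_0$ and the target region $\mathcal{O}_2\setminus\mathcal{O}_3=(\mathcal{W}_2\setminus\mathcal{W}_3)\times\R$ are all cylinders over subsets of $\omega$. This lets me work with a Carleman weight $\varphi=\varphi(x')$ depending only on the transverse variable: every weighted integral then factors trivially in $x_3$, and since $\overline{\mathcal{W}}_0$ is bounded, $e^{2\lambda\varphi}$ is bounded above and below on the relevant $x'$-region, so all integrals stay finite over the infinite cylinder. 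In this way the unboundedness of $\Omega$ plays no role, and the only genuinely two-dimensional issue is the geometric choice of $\varphi$.

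First I would fix a weight $\varphi\in\mathcal{C}^2(\overline{\mathcal{W}}_0)$ with $\nabla_{x'}\varphi\neq 0$ on $\overline{\mathcal{W}}_0$ — the sole pseudoconvexity requirement for $-\Delta$ once one passes to the convexified weight $e^{\lambda\varphi}$, and it transfers to $\R^3$ because $\nabla\varphi=(\nabla_{x'}\varphi,0)\neq 0$. Extending the boundary Carleman estimate of Proposition~\ref{p1} to this weight yields, for $v\in H^2(\Omega)\cap H^1_0(\Omega)$ and $\lambda\geq\lambda_0$,
\[
\lambda^3\int_\Omega e^{2\lambda\varphi}|v|^2\,dx+\lambda\int_\Omega e^{2\lambda\varphi}|\nabla v|^2\,dx\leq C\int_\Omega e^{2\lambda\varphi}|\Delta v|^2\,dx+C\lambda\int_{\partial\Omega}e^{2\lambda\varphi}|\partial_\nu v|^2\,|\partial_\nu\varphi|\,d\sigma(x).
\]
The lower-order terms of $-\Delta_{A_1}+q_1$ (a first-order term with $\mathcal{C}^2$ coefficient and a bounded zeroth-order term, all controlled by $M$) are absorbed into the left-hand side for $\lambda$ large, so the same estimate holds with $\Delta$ replaced by $-\Delta_{A_1}+q_1$ and constants depending only on $\Omega$, $M$, $\lambda_0$. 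I would then split $\partial\Omega=(\Gamma_0\times\R)\cup((\partial\omega\setminus\Gamma_0)\times\R)$ and arrange the sign of $\partial_\nu\varphi$ so that the boundary integral over the inaccessible part $(\partial\omega\setminus\Gamma_0)\times\R$ lands on the favorable (left) side and can be discarded, leaving only the integral over $\Gamma_0\times\R$, which is bounded by the measured Neumann trace.

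Next I would localize. Choose $\chi\in\mathcal{C}^\infty(\overline{\omega};[0,1])$, depending only on $x'$, with $\chi=1$ on $\mathcal{W}_1$ and $\supp\chi\subset\mathcal{W}_0$, so that $\supp\nabla\chi\subset\overline{\mathcal{W}}_0\setminus\mathcal{W}_1$ and, by the strict inclusions $\overline{\mathcal{W}}_{j+1}\subset\mathcal{W}_j$, the target ring $\mathcal{W}_2\setminus\mathcal{W}_3$ is separated from $\supp\nabla\chi$. Since $w=0$ on $\partial\Omega$ and $\chi=1$ near $\partial\omega$, the function $v=\chi w$ lies in $H^2(\Omega)\cap H^1_0(\Omega)$ with $\partial_\nu v=\partial_\nu w$ on $\partial\Omega$, and
\[
(-\Delta_{A_1}+q_1)(\chi w)=\chi F+[-\Delta_{A_1},\chi]w,
\]
where the commutator is first order in $w$ and supported in $\supp\nabla\chi$. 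Plugging $v=\chi w$ into the Carleman estimate, the right-hand side is bounded by
\[
C\,e^{2\lambda\max_{\overline{\mathcal{W}}_0}\varphi}\norm{F}_{L^2(\mathcal{O}_0)}^2+C\,e^{2\lambda\max_{\supp\nabla\chi}\varphi}\norm{w}_{H^2(\Omega)}^2+C\lambda\,e^{2\lambda\max_{\Gamma_0}\varphi}\norm{\partial_\nu w}_{L^2(\Gamma_0\times\R)}^2,
\]
while the left-hand side is bounded below, using $\chi\equiv1$ on the target ring, by $\lambda\,e^{2\lambda\min_{\mathcal{W}_2\setminus\mathcal{W}_3}\varphi}\norm{w}_{H^1(\mathcal{O}_2\setminus\mathcal{O}_3)}^2$. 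Dividing by $e^{2\lambda\min_{\mathcal{W}_2\setminus\mathcal{W}_3}\varphi}$, taking square roots and absorbing the harmless polynomial factors in $\lambda$ into the exponentials produces \eqref{eq12}, with $\alpha_1=\min_{\mathcal{W}_2\setminus\mathcal{W}_3}\varphi-\max_{\supp\nabla\chi}\varphi>0$ and $\alpha_2=\max\bigl(\max_{\overline{\mathcal{W}}_0}\varphi,\max_{\Gamma_0}\varphi\bigr)-\min_{\mathcal{W}_2\setminus\mathcal{W}_3}\varphi$; all constants depend only on $\Omega$, $M$, $\lambda_0$ and the $\mathcal{O}_j$, the coefficient dependence entering solely through the absorbed $M$-bounded lower-order terms.

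The main obstacle is the construction of the transverse weight $\varphi(x')$ meeting all requirements simultaneously. A topological constraint forces the issue: any $\varphi$ with nowhere-vanishing gradient on the bounded cross-section must have $\partial_\nu\varphi<0$ on a nonempty part of $\partial\omega$ (near its boundary minimum), so this \emph{inaccessible front} cannot be avoided and must be confined to $\Gamma_0$. Thus I must build $\varphi$ so that (i) $\nabla_{x'}\varphi\neq0$ on $\overline{\mathcal{W}}_0$, (ii) $\{x'\in\partial\omega:\partial_\nu\varphi(x')<0\}\subset\Gamma_0$, and (iii) $\varphi$ decreases from the ring $\mathcal{W}_2\setminus\mathcal{W}_3$ toward $\supp\nabla\chi$, so that $\alpha_1>0$. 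Following the partial-data construction of \cite{B}, I would take $\varphi$ built from the (logarithm of the) distance to a point $x_0$ placed just outside $\overline{\omega}$ through $\Gamma_0$, whose level sets sweep across the collar while keeping the front cap inside $\Gamma_0$; verifying the boundary sign on $\partial\omega\setminus\Gamma_0$ against the shape of $\Gamma_0$ is the delicate step. The only genuinely new feature relative to bounded domains is ensuring uniformity in $x_3$, which is precisely why $\varphi$ is taken independent of $x_3$ and is what keeps the weighted integrals finite over the infinite waveguide.
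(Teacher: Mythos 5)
Your overall scheme (a boundary Carleman estimate with weight $e^{2\lambda\varphi}$, $\varphi=\varphi(x')$, applied to a cut-off of $w$ localized in the collar, then comparison of the sizes of $\varphi$ on the target ring, on the commutator support and on $\Gamma_0$) is exactly the strategy of the paper, and your reduction steps (localization, commutator bookkeeping, discarding the boundary term of favorable sign, uniformity in $x_3$ via a transverse weight) are sound. The genuine gap is the construction of the weight, which you correctly single out as the delicate step but then resolve incorrectly. First, a sign slip: with the convention $e^{+2\lambda\varphi}$, the boundary term that must be paid for is supported on $\{\partial_\nu\varphi>0\}$ (compare condition (C3) in the paper's Appendix B), so the requirement is $\{\partial_\nu\varphi>0\}\cap\partial\omega\subset\Gamma_0$, not your condition (ii). More seriously, your proposed weight --- the (logarithm of the) distance to a point $x_0$ placed just outside $\overline{\omega}$ behind $\Gamma_0$ --- cannot meet the requirements. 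For such a weight the set where $\partial_\nu\varphi$ has the bad sign is dictated by the geometry of $\omega$ (the portion of $\partial\omega$ ``visible'' from $x_0$), and since $\Gamma_0$ is an \emph{arbitrary} nonempty open subset of the boundary of a merely simply connected (not necessarily convex) cross-section, this set cannot in general be confined to $\Gamma_0$. Moreover, a radial weight violates your own requirement (iii): your cutoff $\chi$ is fixed in advance with $\supp\nabla\chi\subset\overline{\mathcal{W}}_0\setminus\mathcal{W}_1$, and the part of this annulus at the angular position of $x_0$ is closer to $x_0$ than the part of the ring $\mathcal{W}_2\setminus\mathcal{W}_3$ diametrically opposite to $x_0$; hence $\max_{\supp\nabla\chi}\varphi>\min_{\mathcal{W}_2\setminus\mathcal{W}_3}\varphi$ and your $\alpha_1$ is negative.

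The paper circumvents both problems with two linked ingredients that are absent from your proposal. The weight is of Fursikov--Imanuvilov type (Lemma \ref{lempsi}, borrowed from \cite{So}, \cite{IY}, \cite{Im}): a function $\psi_0$ with nonvanishing gradient on $\overline{\mathcal{W}}_0$ that is positive inside $\mathcal{W}_0$ and vanishes on \emph{all} of $\partial\mathcal{W}_0\setminus\Gamma_0$ (both on $\partial\omega\setminus\Gamma_0$ and on $\Gamma^\sharp$), with $\partial_{\nu}\psi_0\leq 0$ there; it is this Morse-theoretic construction, not any explicit distance-type weight, that makes an arbitrary $\Gamma_0$ admissible. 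Second, the cutoff is chosen \emph{after} the weight, not before: once $\kappa>0$ with $\psi_0\geq 2\kappa$ on $\mathcal{W}_2\setminus\mathcal{W}_3$ is fixed, one picks a neighborhood $\mathcal{W}^\sharp$ of $\Gamma^\sharp$ so small that $\psi_0\leq\kappa$ on it, and the cutoff $\Theta$ transitions inside $\mathcal{W}^\sharp$; the commutator then automatically lives where the weight is small, giving $\alpha_1=e^{2\beta\kappa}-e^{\beta\kappa}>0$. With your fixed $\chi$, even the Fursikov--Imanuvilov weight gives no control, since $\psi_0$ need not be small on the inner edge of $\mathcal{W}_1$. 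Your argument can be repaired by importing exactly these two ingredients, but as written it does not go through.
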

\subsection{Recovery of the magnetic field}
\begin{proof}[Proof of Theorem $\ref{st3}$]
Let $w \in H^2(\Omega)$ be the solution of
\begin{equation}\label{41}
\left\lbrace
\begin{array}{l}
\text{$-\Delta_{A_1} w + q_1  w= 0 \quad \, $ in $ \Omega$ ,} \\
\text{$w = u_2:=h \quad \quad  \qquad  $ on $ \partial \Omega $.}
\end{array}\right.
\end{equation}
Then, $u=w-u_2$ solves 
\begin{equation}\label{42}
\left\lbrace
\begin{array}{l}
\text{$-\Delta_{A_1} u + q_1  u = 2i A(x) \cdot \nabla  u_2 + V(x) u_2 (x) \quad \, $ in $ \Omega$ ,} \\
\text{$u = 0 \qquad  \qquad \qquad \qquad \qquad \qquad \qquad  \qquad \quad \; $ on $ \partial \Omega $,}
\end{array}\right.
\end{equation}
where $V(x) = i \textrm{div} (A) - \overset{\sim}{q}(x) $. Let $\Theta$ be a cut-off function satisfying $0\leqslant \Theta \leqslant 1$, $\Theta \in \mathcal{C}^\infty (\R^2)$ and 
\begin{equation}\label{eq43}
\Theta(x')=\left\lbrace
\begin{array}{ll}
1 & \mbox{in $\omega \backslash \mathcal{W}_2 $,}\\
0 & \mbox{in $\mathcal{W}_3$.}
\end{array}
\right.
\end{equation}
We set  $$\overset{\sim }{u}(x',x_3) = \Theta (x') u(x',x_3), \quad x' \in \omega, \, x_3 \in \R .$$ We remark that $\overset{\sim }{u}$ solves 
$$\left\lbrace
\begin{array}{l}
\text{$(-\Delta_{A_1} + q_1)\overset{\sim }{u}(x',x_3) =  2i \Theta(x')A(x) \cdot \nabla  u_2 + \Theta(x') V(x) u_2 (x)+P_1(x',D)u(x)$ \qquad in $\Omega $, } \\
\text{$\overset{\sim }{u}=0 \qquad \qquad \qquad \qquad \quad  \,\qquad \qquad \qquad \qquad \qquad \quad \qquad \qquad  \quad \quad \qquad \qquad \qquad  \quad \;    $ on $ \partial \Omega$, } 
\end{array}\right.$$
with $P_1(x',D)$ is given by
$$ P_1(x',D)u  = -[\Delta',\Theta]u - i A_1 [\nabla,\tilde{\Theta}]u - i [\nabla,\tilde{\Theta}]A_1 u,$$ 
where $\nabla' = (\partial_{x_1} , \partial_{x_2} )^{T}$, $\Delta'=\partial_{x_1}^2 + \partial_{x_2}^2$and
$$\tilde{\Theta}(x',x_3)=\Theta(x'),\quad x'\in\omega,\ x_3\in\R.$$
 Moreover, for an arbitrary $\overset{\sim}{v} \in H^2(\Omega)$, an integration by parts leads to
$$\int_\Omega (-\Delta_{A_1} + q_1)\overset{\sim }{u}(x)\overline{\overset{\sim}{v}(x)} \, dx = \int_\Omega \overset{\sim }{u}(x)\overline{(-\Delta_{A_1} + q_1)\overset{\sim}{v}(x) } \, dx .$$
On the other hand, we have: 
\begin{equation}\label{44}
\int_\Omega (-\Delta_{A_1} + q_1)\overset{\sim }{u}(x)\overline{\overset{\sim}{v}(x)} \, dx = \int_\R\int_\omega \big( 2i \Theta(x')A(x) \cdot \nabla  u_2 + \Theta(x') V(x) u_2 (x) +P_1(x',D)u(x) \big)\overline{\overset{\sim}{v}(x)} \, dx' \, dx_3.
\end{equation}
Choosing $\overset{\sim}{v} = u_1 $, we have $(-\Delta_{A_1} + q_1) \overset{\sim}{v} = 0$ in $\Omega$ and by the fact that $A=0$ and $q=0$ in $\mathcal{O}_0$, we get
\begin{multline}\label{45}
i \int_\Omega div (A u_2 )\overline{ u_1(x) }  \, dx + i \int_\Omega  A(x) \cdot \overline{ u_1(x) } \nabla u_2 \, dx - \int_{\Omega } ( A_1^2 - A_2^2 ) u_2 \overline{ u_1(x) } \, dx - \int_{\Omega } q u_2 \overline{ u_1(x) } \, dx \\= - \int_\Omega P_1(x',D)u(x)\overline{ u_1(x) } \, dx .
\end{multline}
By integrating by parts and using the fact that $A=0$ and $q=0$ in $\mathcal{O}_0$, we can easily obtain
\begin{equation}\label{455}
i \int_\Omega  A(x) \cdot [\overline{ u_1(x) } \nabla u_2 -u_2 \nabla \overline{ u_1(x) }] \, dx = \int_{\Omega } ( A_1^2 - A_2^2 ) u_2 \overline{ u_1(x) } \, dx + \int_{\Omega } q u_2 \overline{ u_1(x) } \, dx - \int_\Omega P_1(x',D)u(x)\overline{ u_1(x) } \, dx .
\end{equation}
Furthermore, using the fact that $P_1(x',D)u$ is supported on $ \overline{\mathcal{O}_2} \backslash\mathcal{O}_3 $, we find 
\begin{align*}
\int_\Omega \vert P_1(x',D)u \overline{ u_1(x) } \vert \, dx & \leqslant \Vert u \Vert_{H^1(\mathcal{O}_2 \backslash\mathcal{O}_3)}\Vert \overline{ u_1(x) } \Vert_{L^2(\Omega)} \\ & \leqslant C \left(1+\frac{|\xi'|}{|\xi_3|}\right) \Big(  1+|\xi|^2 \Big) e^{\rho(D+1)} \Vert u \Vert_{H^1(\mathcal{O}_2 \backslash\mathcal{O}_3)}.
\end{align*}
In a similar way to Theorem \ref{t1}, using \eqref{455}, we obtain
$$\abs{\int_{\R^3}(A\cdot(\tilde{\theta}+i\eta))e^\Phi e^{-ix\cdot\xi}dx}\leq C\left(1+\frac{|\xi'|}{|\xi_3|}\right)^2(1+|\xi|^2)\left[\rho^{-\frac{s}{4}}+e^{\rho(D+1)} \Vert u \Vert_{H^1(\mathcal{O}_2 \backslash\mathcal{O}_3)}\right] .$$
Applying \cite[Lemma 4.1]{Ki5} and following the same steps of the proof of Theorem \ref{t1}, we deduce that
$$\abs{\mathcal F(\pd_{x_k}a_j-\pd_{x_j}a_k)(\xi)}\leq C\left(1+\frac{|\xi'|}{|\xi_3|}\right)^2(1+|\xi|^3)\left[\rho^{-\frac{s}{4}}+e^{\rho(D+1)} \Vert u \Vert_{H^1(\mathcal{O}_2 \backslash\mathcal{O}_3)}\right].$$
Combining this last estimate with \eqref{eq12}, we get
$$\begin{aligned}
&\abs{\mathcal F(\pd_{x_k}a_j-\pd_{x_j}a_k)(\xi)} \\
&\leq C\left(1+\frac{|\xi'|}{|\xi_3|}\right)^2(1+|\xi|^3)\left[\rho^{-\frac{s}{4}}+e^{\rho(D+1)} \Big( e^{-\lambda \alpha_1} \Vert u \Vert_{H^2(\Omega)} + e^{\lambda \alpha_2}  \left(\Vert \partial_\nu u \Vert_{L^{2}(\Gamma_0 \times \R)}+\norm{(\Delta_{A_1}+q_1)u}_{L^2(\mathcal{O}_0)}\right) \Big) \right].\end{aligned}
$$
Since $\partial_\nu u = (\Lambda'_{A_1,q_1}- \Lambda'_{A_2,q_2} ) (h)$, where $h$ is given by \eqref{41}, we have 
\begin{align*}
\Vert \partial_\nu u \Vert_{L^{2}(\Gamma_0 \times \R)} & \leqslant C \Vert \Lambda'_{A_1,q_1}- \Lambda'_{A_2,q_2} \Vert_{\mathcal{B}(H^{\frac{3}{2}} (\partial \Omega) , H^{\frac{1}{2}} (\Gamma_0 \times R ))} \Vert h\Vert_{H^{\frac{3}{2}}(\partial \Omega)} \\
&  \leqslant C  e^{(D+1)\rho} \left(1+\frac{|\xi'|}{|\xi_3|}\right) \Big(  1+|\xi|^2 \Big) \Vert \Lambda'_{A_1,q_1}- \Lambda'_{A_2,q_2} \Vert_{\mathcal{B}(H^{\frac{3}{2}} (\partial \Omega) , H^{\frac{1}{2}} (\Gamma_0 \times R ))} .
\end{align*}
Moreover, since $A_j \in \mathcal{A}(M, A_0 , \mathcal{O}_0)$ and $q_j \in \mathcal{Q}(M, q_0 , \mathcal{O}_0)$, $j=1,2$, we have $(\Delta_{A_1}+q_1)u= 0$ on $\mathcal{O}_0$ and it follows by $\eqref{prp}$ \begin{multline}\label{51}
\abs{\mathcal F(\pd_{x_k}a_j-\pd_{x_j}a_k)(\xi)} \\ \leqslant C \left(1+\frac{|\xi'|}{|\xi_3|}\right)^3(1+|\xi|^3)^2 \left( \rho^{-\frac{s}{4}}+e^{2\rho(D+1)-\lambda \alpha_1} + e^{2\rho(D+1)+\lambda \alpha_2}  \Vert \Lambda'_{A_1,q_1}- \Lambda'_{A_2,q_2} \Vert_{\mathcal{B}(H^{\frac{3}{2}} (\partial \Omega) , H^{\frac{1}{2}} (\Gamma_0 \times R ))} \right) .
\end{multline}
Let $D':= D+1$ and $\lambda = \tau \rho $. Choosing $\tau $ sufficiently large, it becomes easy to find constants $\alpha_3$ and $\alpha_4$ such that 
\begin{equation}\label{52}
e^{2D'\rho-\lambda \alpha_1} =  e^{\rho(2D' -\tau \alpha_1)} \leqslant e^{-\alpha_3 \rho} \quad \text{ and } \quad e^{2D'\rho+\lambda \alpha_2}=e^{\rho(2D'  + \tau \alpha_2)} \leqslant e^{\alpha_4 \rho} .
\end{equation}
Combining $\eqref{51}$ and $\eqref{52}$, we conclude that 
\begin{multline}\label{53}
\abs{\mathcal F(\pd_{x_k}a_j-\pd_{x_j}a_k)(\xi)} \\ \leqslant C \left(1+\frac{|\xi'|}{|\xi_3|}\right)^3(1+|\xi|^3)^2 \left( \rho^{-\frac{s}{4}}+e^{- \alpha_3 \rho} + e^{\alpha_4 \rho }  \Vert \Lambda'_{A_1,q_1}- \Lambda'_{A_2,q_2} \Vert_{\mathcal{B}(H^{\frac{3}{2}} (\partial \Omega) , H^{\frac{1}{2}} (\Gamma_0 \times R ))} \right) \\ \leq C \left(1+\frac{|\xi'|}{|\xi_3|}\right)^3(1+|\xi|^3)^2 \left( \rho^{-\frac{s}{4}}+ e^{\alpha_4 \rho }  \Vert \Lambda'_{A_1,q_1}- \Lambda'_{A_2,q_2} \Vert_{\mathcal{B}(H^{\frac{3}{2}} (\partial \Omega) , H^{\frac{1}{2}} (\Gamma_0 \times R ))} \right).
\end{multline} 
From now on we fix $R>1$, $\gamma':= \norm{\Lambda'_{A_1,q_1}- \Lambda'_{A_2,q_2}}_{\mathcal{B}(H^{\frac{3}{2}} (\partial \Omega) , H^{\frac{1}{2}} (\Gamma_0 \times R ))}$ and we consider the set
$$D_R=\{\xi\in B_R:\ |\xi_3|\geq R^{-4}\}.$$
We obtain the estimate
$$\abs{\mathcal F(\pd_{x_k}a_j-\pd_{x_j}a_k)(\xi)}\leq C(R^{21}\rho^{-\frac{s}{4}}+R^{21}\gamma' e^{\alpha_4 \rho }),\quad \xi\in D_R.$$
It follows that
\bel{t1eee}\int_{D_R}\abs{\mathcal F(\pd_{x_k}a_j-\pd_{x_j}a_k)(\xi)}^2d\xi \leq C(R^{45}\rho^{-\frac{s}{2}}+R^{45}\gamma'^2 e^{2\alpha_4\rho}).\ee
Combining \eqref{t1eee}-\eqref{t1gg}, we get
$$\int_{\R^3}\abs{\mathcal F(\pd_{x_k}a_j-\pd_{x_j}a_k)(\xi)}^2d\xi\leq C(R^{-2}+R^{45}\rho^{-\frac{s}{2}}+R^{45}\gamma'^2 e^{2\alpha_4\rho})$$
and by Plancherel formula, it follows
$$\norm{dA}_{L^2(\Omega)}\leq C(R^{-1}+R^{45/2}\rho^{-\frac{s}{4}}+R^{45/2}\gamma' e^{\alpha_4\rho}).$$
Choosing $R=\rho^{\frac{s}{94}}$, we get
\begin{align}\label{3332}
\norm{dA}_{L^2(\Omega)}&\leq C(\rho^{-\frac{s}{94}}+(\rho^{-\frac{s}{94}})^{45/2}\gamma' e^{\alpha_4\rho}) \nonumber \\ &\leq C(\rho^{-\frac{s}{94}}+\gamma' e^{(\alpha_4+1)\rho}).
\end{align}
Then, repeating the arguments used at the end of the proof of Theorem \ref{t1}, we can deduce $\eqref{st3}$ from $\eqref{3332}$.
\end{proof}

\setcounter{equation}{0}

\appendix
\section{ }
In this appendix, we consider the following interpolation result. 
\begin{lemma} \label{l3} Let $h\in W^{1,\infty}(\Omega)\cap L^2(\Omega)$. Then there exists $C>0$ depending only on $\Omega$ such that 
\bel{l3a}\norm{h}_{L^\infty(\Omega)}\leq C\norm{h}_{W^{1,\infty}(\Omega)}^{\frac{3}{5}}\norm{h}_{L^2(\Omega)}^{\frac{2}{5}}.\ee
\end{lemma}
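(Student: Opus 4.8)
The plan is to establish this Gagliardo--Nirenberg type inequality by the elementary ``Lipschitz oscillation'' argument: a $W^{1,\infty}$ function cannot drop too quickly from a value close to its supremum, so a large sup norm forces a definite amount of $L^2$ mass on a ball whose radius is controlled by the gradient bound. I would write $M=\norm{h}_{L^\infty(\Omega)}$, $N=\norm{h}_{W^{1,\infty}(\Omega)}$ and $E=\norm{h}_{L^2(\Omega)}$, and record at the outset the trivial but decisive inequality $N\geq M$. If $M=0$ the estimate is trivial, so I assume $M>0$, fix $\delta\in(0,M)$, and pick $x_0\in\Omega$ with $|h(x_0)|>M-\delta$.

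The geometric input I would isolate first is a uniform interior volume bound. Since $\omega$ is a bounded $\mathcal C^3$ domain it satisfies a uniform interior cone condition, and hence so does the cylinder $\Omega=\omega\times\R$, the cone being unconstrained in the $x_3$-direction. Consequently there exist constants $c_0>0$ and $r_0>0$, depending only on $\Omega$, such that, writing $B(x_0,r):=\{x\in\R^3:\ |x-x_0|<r\}$,
\bel{volbound}|B(x_0,r)\cap\Omega|\geq c_0\,r^3,\qquad 0<r\leq r_0,\ x_0\in\Omega.\ee
This is the only place where the geometry of $\Omega$ enters, and it is the step I expect to demand the most care: the bound must hold \emph{uniformly} in $x_0$ ranging over the unbounded set $\Omega$, and this uniformity is precisely what the $\mathcal C^3$ hypothesis on $\omega$ secures (the relevant cone aperture and height being the same at every boundary point).

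With \eqref{volbound} available, I would argue as follows. Using $|\nabla h|\leq N$ a.e., every $y\in B(x_0,r)$ satisfies $|h(y)|\geq |h(x_0)|-N|y-x_0|\geq (M-\delta)-Nr$, so the choice $r=\min\{(M-\delta)/(2N),\,r_0\}$ gives $|h(y)|\geq (M-\delta)/2$ on $B(x_0,r)\cap\Omega$, whence
$$E^2\geq \int_{B(x_0,r)\cap\Omega}|h|^2\,dx\geq \frac{(M-\delta)^2}{4}\,c_0\,r^3.$$
I would then distinguish the two scaling regimes. If $(M-\delta)/(2N)\leq r_0$, then $r=(M-\delta)/(2N)$ and the last display becomes $E^2\geq \tfrac{c_0}{32}(M-\delta)^5N^{-3}$, i.e. $M-\delta\leq C\,N^{3/5}E^{2/5}$. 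If instead $(M-\delta)/(2N)>r_0$, then $r=r_0$ yields $(M-\delta)\leq C_1 E$ with $C_1=2(c_0r_0^3)^{-1/2}$; factoring $M-\delta=(M-\delta)^{3/5}(M-\delta)^{2/5}$ and using $(M-\delta)^{3/5}\leq N^{3/5}$ (by $N\geq M$) recovers $M-\delta\leq C_1^{2/5}\,N^{3/5}E^{2/5}$. In either regime letting $\delta\to0$ produces \eqref{l3a} with $C$ depending only on $c_0$ and $r_0$, hence only on $\Omega$. The point worth flagging is that the interpolation power is forced by dimension three through the $r^3$ in \eqref{volbound}, and the inequality $N\geq M$ is exactly what closes the large-amplitude regime.
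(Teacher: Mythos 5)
Your argument is correct in substance, and it takes a genuinely different route from the paper's. The paper never argues directly on the unbounded domain: it fixes a cutoff $\psi\in\mathcal C^\infty_0(-2,2)$ with $\psi=1$ on $[-1,1]$ and a smooth bounded set $\mathcal O$ containing $\omega\times[-2,2]$, applies the known bounded-domain interpolation inequality (quoting \cite{CK}) to the translated and truncated functions $h_{y_3}(x',x_3)=\psi(y_3+x_3)h(x',x_3+y_3)$, observes that the resulting bound is uniform in $y_3\in\R$ because translation in $x_3$ preserves all three norms up to the fixed factor $\norm{\psi}_{W^{1,\infty}(\R)}$, and then takes the supremum over $y_3$. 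That proof is short but rests entirely on citing the bounded-domain lemma and on the translation invariance of the cylinder. Your proof is self-contained: the oscillation-plus-volume argument reproves the inequality from scratch, makes explicit that the exponents $3/5$ and $2/5$ come from the dimension via the $r^3$ volume bound, and isolates $\norm{h}_{W^{1,\infty}}\geq\norm{h}_{L^\infty}$ as the inequality that closes the large-amplitude regime. Both yield a constant depending only on $\Omega$; your version is more portable (it would work on any unbounded domain with a uniform interior cone condition, with no product structure needed), while the paper's is shorter given the citation.

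One step of yours needs a patch. You use $|h(y)|\geq|h(x_0)|-N|y-x_0|$ for $y\in B(x_0,r)\cap\Omega$, but for a $W^{1,\infty}$ function on a non-convex domain the Lipschitz bound with constant $N$ holds a priori only for the intrinsic (geodesic) distance of $\Omega$: the segment joining $x_0$ to $y$ may leave $\Omega$ when $\omega$ is not convex, and then $|h(y)-h(x_0)|\leq N|y-x_0|$ is not justified. Two standard repairs keep all constants depending only on $\Omega$: (i) integrate over the interior cone with vertex $x_0$ furnished by the very cone condition you invoke for the volume bound; every point of that cone is joined to $x_0$ by a segment lying in $\Omega$, so the Euclidean Lipschitz bound is legitimate there, and the cone truncated at height $r$ still has volume at least $c_0 r^3$ for $r\leq r_0$; or (ii) note that a bounded $\mathcal C^3$ domain $\omega$ is quasiconvex, hence so is $\Omega=\omega\times\R$ with the same constant $C_\Omega$, and run your argument with $N$ replaced by $C_\Omega N$. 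With either fix your proof is complete.
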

\begin{proof} Let us first observe that this result is well known for $\Omega$ bounded (see e.g. \cite[Lemma Appendix B.1.]{CK}), but we have not find any proof of it for unbounded domains. For this reason, we decided do give the full proof of this result. Let us fix $\psi\in\mathcal C^\infty_0(-2,2)$ satisfying $\psi=1$ on $[-1,1]$ and fix $y_3\in \R$. Denote also by $\mathcal O$ a smooth open bounded subset of $\Omega$ such that $\omega\times[-2,2]\subset\mathcal O$. Now let us consider the function $h_{y_3}:x=(x',x_3)\mapsto \psi(y_3+x_3)h(x',x_3+y_3)$. Applying \eqref{l3a} for $\Omega=\mathcal O$ and $h=h_{y_3}$, we obtain that
\bel{l3b}\norm{h_{y_3}}_{L^\infty(\mathcal O)}\leq C\norm{h_{y_3}}_{W^{1,\infty}(\mathcal O)}^{\frac{3}{5}}\norm{h_{y_3}}_{L^2(\mathcal O)}^{\frac{2}{5}},\ee
with $C>0$ depending only on $\mathcal O$. In the same way, we have
$$\norm{h_{y_3}}_{W^{1,\infty}(\mathcal O)}\leq \norm{\psi h}_{W^{1,\infty}(\Omega)}\leq \norm{\psi}_{W^{1,\infty}(\R)}\norm{ h}_{W^{1,\infty}(\Omega)},$$
$$\norm{h_{y_3}}_{L^2(\mathcal O)}^2\leq \norm{\psi}_{L^\infty(\R)}^2\int_\omega\int_\R|h(x',x_3+y_3)|^2dx'dx_3=\norm{\psi}_{L^\infty(\R)}^2\norm{h}_{L^2(\Omega)}^2.$$
Combining these two estimates with \eqref{l3b}, we obtain
$$\norm{h_{y_3}}_{L^\infty(\mathcal O)}\leq C\norm{\psi}_{W^{1,\infty}(\R)}\norm{h}_{W^{1,\infty}(\Omega)}^{\frac{3}{5}}\norm{h}_{L^2(\Omega)}^{\frac{2}{5}}.$$
Since the right hand side of the above identity is independent of $y_3\in\R$, we can take the sup with respect to $y_3\in\R$ in order to deduce that
$$\norm{h}_{L^\infty(\Omega)}\leq\sup_{y_3\in\R}\norm{h_{y_3}}_{L^\infty(\mathcal O)}\leq C\norm{\psi}_{W^{1,\infty}(\R)}\norm{h}_{W^{1,\infty}(\Omega)}^{\frac{3}{5}}\norm{h}_{L^2(\Omega)}^{\frac{2}{5}}.$$
This estimate clearly implies \eqref{l3a}.\end{proof}
\setcounter{equation}{0}

\section{Carleman's estimate}
The main goal of this appendix is to prove a Carleman's estimate for the magnetic Schr\"odinger operator $-\Delta_A + q $ in an infinite cylindrical domain in order to deduce the weak unique continuation property given in Lemma \ref{UCP}. As we are dealing with weighted inequalities, we borrow, from \cite{So} (see also \cite[Lemma 2.3]{IY}, \cite[Lemma 1.2]{Im} and \cite[Theorem 2.4]{KU1}), the following result that guarantees the existence of the weigh function.
\begin{lemma}\label{lempsi}
There exists a function $\psi_0 \in \mathcal{C}^3(\overline{\mathcal{W}}_0)$ such that
\begin{enumerate}
\item[(i)] $\psi_0 (x') > 0$ for all $x' \in \mathcal{W}_0$,
\item[(ii)] There exists $\alpha_0 > 0$ such that $\vert \nabla^{'} \psi_0 (x') \vert \geqslant \alpha_0 $ for all $x' \in \overline{\mathcal{W}}_0$,
\item[(iii)] $\partial_{\nu'} \psi_0 (x') \leqslant 0 $ for all $x' \in \partial \mathcal{W}_0 \backslash \Gamma_0$,
\item[(iv)] $\psi_0 (x')= 0$ for all $x' \in \partial \mathcal{W}_0 \backslash \Gamma_0$.
\end{enumerate}
\end{lemma}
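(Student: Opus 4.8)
The plan is to build $\psi_0$ by hand on the two–dimensional compact set $\overline{\mathcal{W}}_0$, using its collar structure, rather than importing a black box. First I would record that condition (iii) is almost a formal consequence of (i) and (iv): if $x'\in\partial\mathcal{W}_0\setminus\Gamma_0$ then $\psi_0(x')=0$ while $\psi_0\geq 0$ nearby, so $x'$ is a boundary minimum and the outward normal derivative satisfies $\partial_{\nu'}\psi_0(x')\leq 0$. In fact I will arrange the strict inequality $\partial_{\nu'}\psi_0<0$ on $\partial\mathcal{W}_0\setminus\Gamma_0$, which is also forced by (ii): on that set the tangential derivative of $\psi_0$ vanishes identically, so $\nabla'\psi_0$ is purely normal there and is nonzero exactly when $\partial_{\nu'}\psi_0\neq 0$. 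Hence the genuine content is to produce $\psi_0\in\mathcal{C}^3(\overline{\mathcal{W}}_0)$ that is positive in $\mathcal{W}_0$, vanishes on $\Gamma^\sharp\cup(\partial\omega\setminus\Gamma_0)$, and has nowhere vanishing gradient on the \emph{closed} set $\overline{\mathcal{W}}_0$.

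Since $\omega$ is simply connected with $\mathcal{C}^3$ boundary, $\mathcal{W}_0$ is an annular collar of $\partial\omega$ and admits a change of variables $(s,t)$ identifying $\overline{\mathcal{W}}_0$ with $\mathbb{S}^1\times[0,1]$, where $\Gamma^\sharp=\{t=0\}$, $\partial\omega=\{t=1\}$ and $\Gamma_0=\{t=1,\ s\in J\}$ for a nonempty open arc $J\subset\mathbb{S}^1$. I would construct $\psi_0$ in these coordinates and transport it back. The main obstacle is precisely the non-vanishing of $\nabla'\psi_0$: a nonnegative function that vanishes on the whole inner circle $\{t=0\}$ and on the outer arc $\{t=1,\ s\notin J\}$ must, on each radial segment $s=\mathrm{const}$ over which it vanishes at both endpoints, have an interior maximum in $t$; this produces a critical ridge $\{\partial_t\psi_0=0\}$, and along that ridge the full gradient reduces to $\partial_s\psi_0$, which must be kept nonzero.

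The resolution, and the heart of the argument, is that this ridge only lives over the complementary arc $K:=\mathbb{S}^1\setminus\overline{J}$, which is an \emph{interval} and not a full circle. Over $\Gamma_0$ I instead let $\psi_0$ be strictly increasing in $t$ up to a positive, bump-shaped trace on $\{t=1,\ s\in J\}$ vanishing at the two endpoints of $J$, so that the ridge \emph{attaches to the boundary} over $\Gamma_0$ and no interior maximum survives there. Concretely I prescribe the ridge height $m(s)=\psi_0(s,t^{*}(s))$ to be strictly monotone on the interval $K$; by the envelope identity $\frac{d}{ds}\psi_0(s,t^{*}(s))=\partial_s\psi_0$ (valid since $\partial_t\psi_0=0$ on the ridge) this gives $\partial_s\psi_0=m'(s)\neq 0$ along the ridge, and I glue this monotone profile continuously, as $s$ enters $J$, onto the increasing-in-$t$ profile over $\Gamma_0$. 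One then verifies that $\psi_0>0$ for $0<t<1$, that $\psi_0$ vanishes exactly on $\Gamma^\sharp\cup(\partial\omega\setminus\Gamma_0)$ with $\partial_{\nu'}\psi_0<0$ there, and that $\nabla'\psi_0\neq 0$ throughout $\overline{\mathcal{W}}_0$ (the only boundary critical points of $\psi_0|_{\partial\mathcal{W}_0}$ sit on $\Gamma_0$, where the normal derivative is nonzero so the gradient does not vanish). A final mollification preserves all these open conditions and supplies the required $\mathcal{C}^3$ regularity, and $\alpha_0:=\min_{\overline{\mathcal{W}}_0}|\nabla'\psi_0|>0$ yields (ii). This is the construction underlying \cite{So} and the weight-function lemmas of \cite{IY,Im,KU1}; I expect the delicate step to be exactly the gluing of the monotone ridge over $K$ to the boundary-attached profile over $\Gamma_0$ while simultaneously maintaining positivity and the non-vanishing of $\nabla'\psi_0$.
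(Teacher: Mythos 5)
The paper itself does not prove Lemma \ref{lempsi}: it is borrowed from \cite{So} (see also \cite[Lemma 2.3]{IY}, \cite[Lemma 1.2]{Im}, \cite[Theorem 2.4]{KU1}), so your attempt at a self-contained construction is by nature a different route; unfortunately, as described, it cannot work, and the failure is exactly at the gluing you flag as ``the delicate step''. Suppose, as you prescribe, that the profile $t\mapsto\psi_0(s,t)$ is strictly increasing for every $s\in J$, that its trace $g(s)=\psi_0(s,1)$ vanishes at the two endpoints $p_1,p_2$ of $J$ (this much is forced by (iv), since $p_i\in\partial\omega\setminus\Gamma_0$), and that the critical ridge lives only over $K=\mathbb{S}^1\setminus\overline{J}$. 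The function $s\mapsto\max_{t\in[0,1]}\psi_0(s,t)$ is continuous; for $s\in J$ it equals $g(s)$, hence it tends to $0$ as $s\to p_i$ from inside $J$. Consequently $\max_{t}\psi_0(p_i,t)=0$, i.e.\ $\psi_0\equiv 0$ on the whole segment $\{p_i\}\times[0,1]$. This violates (i), because that segment (minus its endpoints) lies in $\mathcal{W}_0$, and also (ii), because every point of it is then an interior minimum of the nonnegative function $\psi_0$, so $\nabla'\psi_0=0$ there. Equivalently, in your notation: the ridge height $m(s)$ must tend to $0$ at \emph{both} endpoints of the interval $K$, so it cannot be strictly monotone on $K$, and the envelope identity gives you nothing near $\partial K$.

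The repair is to let the ridge penetrate into $J$: it must attach to $\{t=1\}$ at two points $q_1,q_2$ in the \emph{interior} of $J$, where the trace is positive, with distinct attachment values $g(q_1)\neq g(q_2)$ and $g'(q_i)\neq 0$; the profile is increasing in $t$ only over the middle sub-arc of $J$ between $q_1$ and $q_2$, while the ridge height is strictly monotone along the closed complementary arc from $q_1$ through $K$ to $q_2$, interpolating between $m(q_1)=g(q_1)$ and $m(q_2)=g(q_2)$. With this correction your scheme can be carried through. Two further points need care: first, your final mollification does not preserve condition (iv), which is an exact equality on $\Gamma^\sharp\cup(\partial\omega\setminus\Gamma_0)$ rather than an open condition, so you should instead build $\psi_0$ directly from explicit $\mathcal{C}^3$ profiles in the $(s,t)$ coordinates; second, you tacitly assume $\mathcal{W}_0$ is an annular collar (i.e.\ $\Gamma^\sharp$ connected), whereas the statement allows an arbitrary neighborhood of $\partial\omega$ — harmless for the application, where the sets $\mathcal{W}_j$ are chosen by the authors, but it should be said.
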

Here $\nabla^{'}$ denotes the gradient with respect to $x'\in \R^2$ and $\partial_{\nu'}$ is the normal derivative with respect to $\partial \mathcal{W}_0$, that is $\partial_{\nu'} := \nu' \cdot \nabla^{'}$ where $\nu'$ stands for the outward normal vector to $\partial  \mathcal{W}_0$.\\
Thus, putting $\psi (x) = \psi(x',x_3):=\psi_0(x')$ for all $x=(x',x_3)\in \overline{\mathcal{O}_0}$, it is apparent that the function $\psi \in \mathcal{C}^3(\overline{\mathcal{O}_0})$ satisfies the three following conditions:
\begin{itemize}
\item[(C1)] $\psi(x) > 0$, \quad $x \in \mathcal{O}_0$,
\item[(C2)] $\vert \nabla \psi (x) \vert \geqslant \alpha_0 $ for all $x \in \overline{\mathcal{O}_0}$,
\item[(C3)] $\partial_{\nu} \psi(x) \leqslant 0 $ for all $x \in \partial \mathcal{O}_0 \backslash (\Gamma_0 \times \R)$,
\item[(C4)] $\psi(x)=0$ for all $x \in \Gamma^{\sharp} \times \R$.
\end{itemize}
Here $\nu$ is the outward unit normal vector to the boundary $\partial \mathcal{O}_0$. Evidently $\nu=(\nu',0)$ so we have $ \partial_{\nu} \psi = \partial_{\nu'} \psi_0 $ as the function $\psi$ does not depend on $x_3$.\\
Next, for $\beta \in (0,+\infty)$, we introduce the following weigh function 
\begin{equation}\label{eqphi}
\varphi(x)=\varphi(x')=e^{\beta \psi(x)}; \quad x \in \mathcal{O}_0 .
\end{equation}
Then $\varphi$ satisfies some properties given in the following result.
\begin{lemma}\label{phii}
There exists a constant $\beta_0 \in (0, +\infty)$ depending only on $\psi$ such that the following statements hold uniformly in $\mathcal{O}_0$ for all $\beta \in [\beta_0 , +\infty )$.
\begin{enumerate}
\item[(a)] $\vert \nabla \varphi \vert \geqslant \alpha:= \beta_0 \alpha_0 $,
\item[(b)] $\nabla \vert \nabla \varphi \vert^2 \cdot \nabla \varphi \geqslant C_0 \beta \vert \nabla \varphi \vert^3$,
\item[(c)] $\mathcal{H}(\varphi)\xi \cdot \xi + C_1 \beta \vert \nabla \varphi \vert \vert \xi \vert^2 \geqslant 0 \quad ; \quad \xi \in \R^3$,
\item[(d)] $\vert \Delta \vert \nabla \varphi \vert \vert \leqslant C_2 \vert \nabla \varphi \vert^3$,
\item[(e)] $\Delta \varphi \geqslant 0$.
\end{enumerate}
Here, $C_0$, $C_1$ and $C_2$ are positive constants depending only on $\psi$ and $\alpha_0$ and $\mathcal{H}(\varphi)$ denotes the Hessian matrix of $\varphi$ with respect to $x\in \mathcal{O}_0$.
\end{lemma}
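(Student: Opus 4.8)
The plan is to express each of the quantities in (a)--(e) explicitly in terms of $\psi$ and its derivatives, and then to exploit three features of the weight $\psi$: it is nonnegative on $\overline{\mathcal{O}_0}$ (by (C1) and (C4)), its gradient satisfies $|\nabla\psi|\geq\alpha_0$ (by (C2)), and, since $\psi(x',x_3)=\psi_0(x')$ is independent of $x_3$ while $\mathcal{W}_0$ is bounded, the derivatives $\nabla\psi$, $\mathcal{H}(\psi)$ and $\Delta\psi$ are \emph{uniformly} bounded on the whole unbounded cylinder $\overline{\mathcal{O}_0}$ by constants depending only on $\psi$. Differentiating $\varphi=e^{\beta\psi}$ gives the identities
$$\nabla\varphi=\beta\varphi\nabla\psi,\qquad \mathcal{H}(\varphi)=\beta\varphi\big(\beta\,\nabla\psi\otimes\nabla\psi+\mathcal{H}(\psi)\big),\qquad \Delta\varphi=\beta\varphi\big(\beta|\nabla\psi|^2+\Delta\psi\big),$$
so that in particular $|\nabla\varphi|=\beta\varphi|\nabla\psi|$ with $\varphi=e^{\beta\psi}\geq1$ because $\psi\geq0$.

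Items (a), (e) and (c) then follow by elementary manipulations. For (a) one bounds $|\nabla\varphi|=\beta\varphi|\nabla\psi|\geq\beta\alpha_0\geq\beta_0\alpha_0=:\alpha$. For (e) one writes $\Delta\varphi=\beta\varphi(\beta|\nabla\psi|^2+\Delta\psi)\geq\beta\varphi(\beta\alpha_0^2-\|\Delta\psi\|_\infty)$, which is nonnegative as soon as $\beta_0\geq\|\Delta\psi\|_\infty/\alpha_0^2$. For (c) one discards the nonnegative rank-one term $\beta(\nabla\psi\cdot\xi)^2$ and is left with $\mathcal{H}(\varphi)\xi\cdot\xi\geq-\beta\varphi\,\|\mathcal{H}(\psi)\|_\infty|\xi|^2$; since $\beta|\nabla\varphi|=\beta^2\varphi|\nabla\psi|\geq\beta^2\varphi\alpha_0$, this lower bound is absorbed into $-C_1\beta|\nabla\varphi||\xi|^2$ provided $C_1$ is chosen of order $\|\mathcal{H}(\psi)\|_\infty/(\beta_0\alpha_0)$.

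For (b) I would compute $\nabla|\nabla\varphi|^2\cdot\nabla\varphi=\beta^3\varphi^3\big(2\beta|\nabla\psi|^4+2\,\mathcal{H}(\psi)\nabla\psi\cdot\nabla\psi\big)$ and compare it with $|\nabla\varphi|^3=\beta^3\varphi^3|\nabla\psi|^3$. Using $|\nabla\psi|\geq\alpha_0$ together with the bound $|\mathcal{H}(\psi)\nabla\psi\cdot\nabla\psi|\leq\|\mathcal{H}(\psi)\|_\infty|\nabla\psi|^2$, the bracket is bounded below by $(2\beta\alpha_0-2\|\mathcal{H}(\psi)\|_\infty/\alpha_0)|\nabla\psi|^3\geq C_0\beta|\nabla\psi|^3$ with $C_0=\alpha_0$, again once $\beta_0\geq2\|\mathcal{H}(\psi)\|_\infty/\alpha_0^2$.

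The step deserving real care is (d). A direct computation yields
$$\Delta|\nabla\varphi|=\beta e^{\beta\psi}\big(\beta^2|\nabla\psi|^3+2\beta\,\nabla\psi\cdot\nabla|\nabla\psi|+\beta|\nabla\psi|\,\Delta\psi+\Delta|\nabla\psi|\big),$$
and at first glance the growth rates seem incompatible: the left-hand side carries the weight $e^{\beta\psi}$, whereas $|\nabla\varphi|^3=\beta^3e^{3\beta\psi}|\nabla\psi|^3$ carries $e^{3\beta\psi}$. The resolution --- and this is the crux of the lemma --- is precisely that $\psi\geq0$ forces $e^{\beta\psi}\leq e^{3\beta\psi}$, so the extra exponential weight works in our favour. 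Noting that $|\nabla\psi|\geq\alpha_0>0$ makes $|\nabla\psi|$ of class $\mathcal{C}^2$, so that $\nabla|\nabla\psi|$ and $\Delta|\nabla\psi|$ are bounded, one bounds the bracket above by $C\beta^2$ uniformly for $\beta\geq1$, whence $|\Delta|\nabla\varphi||\leq C\beta^3e^{\beta\psi}\leq(C/\alpha_0^3)\,\beta^3e^{3\beta\psi}|\nabla\psi|^3=(C/\alpha_0^3)|\nabla\varphi|^3$, which is (d) with $C_2=C/\alpha_0^3$. Finally I would take $\beta_0$ to be the largest of the finitely many thresholds appearing above, so that (a)--(e) hold simultaneously for all $\beta\geq\beta_0$, with $C_0$, $C_1$, $C_2$ and $\alpha$ depending only on $\psi$ and $\alpha_0$, as claimed.
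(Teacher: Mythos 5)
The paper states Lemma \ref{phii} \emph{without proof}: it is presented as part of the standard Carleman-weight machinery borrowed from \cite{So} (see also \cite{IY}, \cite{Im}, \cite{KU1}), and the text passes directly from the definition of $\varphi$ to the Carleman estimate. So there is no in-paper argument to compare yours against; your proposal supplies the missing verification, and it is correct. The identities $\nabla\varphi=\beta\varphi\nabla\psi$, $\mathcal{H}(\varphi)=\beta\varphi\left(\beta\,\nabla\psi\otimes\nabla\psi+\mathcal{H}(\psi)\right)$ and $\Delta\varphi=\beta\varphi\left(\beta|\nabla\psi|^2+\Delta\psi\right)$ are right, and the three structural facts you isolate are exactly what make the statement hold \emph{uniformly} on the unbounded cylinder: $\varphi=e^{\beta\psi}\geq1$ (since $\psi\geq0$ on $\overline{\mathcal{O}_0}$ by (C1), (C4) and continuity), the lower bound $|\nabla\psi|\geq\alpha_0$ from (C2), and the uniform boundedness of $\nabla\psi$, $\mathcal{H}(\psi)$, $\nabla|\nabla\psi|$, $\Delta|\nabla\psi|$ on $\overline{\mathcal{O}_0}$, which holds because $\psi$ is independent of $x_3$ and $\psi_0\in\mathcal{C}^3(\overline{\mathcal{W}}_0)$ with $\mathcal{W}_0$ bounded (and, as you note, $|\nabla\psi|\geq\alpha_0>0$ guarantees $|\nabla\psi|$ is $\mathcal{C}^2$, so its first and second derivatives are indeed bounded). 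Your computations for (a), (b), (c), (e) check out with the stated thresholds $\beta_0\geq\max\left(\|\Delta\psi\|_\infty/\alpha_0^2,\,2\|\mathcal{H}(\psi)\|_\infty/\alpha_0^2\right)$, and your handling of (d) — bounding the bracket by $C\beta^2$ for $\beta\geq1$ and resolving the apparent exponential mismatch via $e^{\beta\psi}\leq e^{3\beta\psi}$, which is again the point where $\psi\geq0$ enters — is the correct and essential observation. Two cosmetic remarks: the auxiliary threshold $\beta\geq1$ used in (d) should be (and implicitly is) folded into the choice of $\beta_0$, and in (c) your constant $C_1$ involves $\beta_0$, which is harmless since $\beta_0$ itself depends only on $\psi$ and $\alpha_0$, as the lemma requires.
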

Now, we may state the following Carleman's estimate for the operator $\Delta_A + q$.
\begin{theorem}
Let $u \in H_0^1(\mathcal{O}_0) \cap H^2(\mathcal{O}_0)$, $M_1,\, M_2>0$ and let $A\in W^{1,\infty}$ and $q \in L^{\infty}(\Omega)$ satisfy $\Vert A \Vert_{W^{1,\infty} (\mathcal{O}_0)} \leqslant M_1$ and $\Vert q  \Vert_{L^{\infty} (\mathcal{O}_0)} \leqslant M_2$. Then, there exists $\beta_0 \in (0, +\infty)$ such that for every $\beta \geqslant \beta_0$, there is $\lambda_0=\lambda_0(\beta) \in (0, +\infty)$ depending only on $\beta$, $\alpha_0$, $\mathcal{O}_0$, $M_1$, $M_2$ and $\Gamma_0$, such that the estimate 
\begin{equation}\label{eq2}
\lambda \int_{\mathcal{O}_0} e^{2\lambda\varphi} \big( \lambda^2 \vert u \vert^2 + \vert \nabla u \vert^2 \big) \, dx \\
\leqslant C \Big( \int_{\mathcal{O}_0} e^{2\lambda\varphi} \vert (\Delta_A + q)u \vert^2 \, dx + \lambda \int_{\Gamma_0 \times \R} e^{2\lambda\varphi} \big\vert \partial_\nu u \big\vert^2  \, d\sigma_x \Big).
\end{equation}
holds for all $\lambda \geqslant \lambda_0$ and some positive constant $C$ that depends only on $\alpha_0$, $\omega$, $\Gamma_0$, $\beta$ and $\lambda_0$.
\end{theorem}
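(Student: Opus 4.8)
The plan is to prove the estimate first for the pure Laplacian $\Delta$ and then recover the full operator by absorbing the lower order perturbations. Writing $(\Delta_A+q)u=\Delta u+2iA\cdot\nabla u+\tilde q\,u$ with $\tilde q:=q+i\,\mathrm{div}(A)-|A|^2\in L^\infty(\mathcal O_0)$ and $\|\tilde q\|_{L^\infty}\leq C(M_1,M_2)$, it suffices to establish
\begin{equation*}
\lambda\int_{\mathcal O_0}e^{2\lambda\varphi}\bigl(\lambda^2|u|^2+|\nabla u|^2\bigr)dx\leq C\Bigl(\int_{\mathcal O_0}e^{2\lambda\varphi}|\Delta u|^2dx+\lambda\int_{\Gamma_0\times\R}e^{2\lambda\varphi}|\partial_\nu u|^2d\sigma_x\Bigr)
\end{equation*}
and then, inserting $\Delta u=(\Delta_A+q)u-2iA\cdot\nabla u-\tilde q\,u$, to absorb $\int e^{2\lambda\varphi}|2iA\cdot\nabla u|^2\leq 4M_1^2\int e^{2\lambda\varphi}|\nabla u|^2$ into the $\lambda\int e^{2\lambda\varphi}|\nabla u|^2$ term and $\int e^{2\lambda\varphi}|\tilde q\,u|^2$ into the $\lambda^3\int e^{2\lambda\varphi}|u|^2$ term, taking $\lambda\geq\lambda_0(M_1,M_2)$. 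I would then conjugate: set $w:=e^{\lambda\varphi}u$, which still lies in $H^2(\mathcal O_0)\cap H^1_0(\mathcal O_0)$ because $\varphi$ is bounded on the compact cross-section $\overline{\mathcal W}_0$, and introduce $L_\lambda w:=e^{\lambda\varphi}\Delta(e^{-\lambda\varphi}w)=\Delta w-2\lambda\nabla\varphi\cdot\nabla w+(\lambda^2|\nabla\varphi|^2-\lambda\Delta\varphi)w$.

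Next I would split $L_\lambda=S+\mathcal A$ into its symmetric part $S:=\Delta+\lambda^2|\nabla\varphi|^2$ and its antisymmetric part $\mathcal A:=-2\lambda\nabla\varphi\cdot\nabla-\lambda\Delta\varphi$; crucially, since $\varphi=\varphi(x')$ we have $\nabla\varphi=(\nabla'\varphi,0)$, so $\mathcal A$ involves only $x'$-derivatives. Expanding the energy identity
\begin{equation*}
\|L_\lambda w\|_{L^2(\mathcal O_0)}^2=\|Sw\|^2+\|\mathcal Aw\|^2+2\,\mathrm{Re}\langle Sw,\mathcal Aw\rangle
\end{equation*}
and integrating by parts converts the cross term into $\langle[S,\mathcal A]w,w\rangle$ plus boundary contributions. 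The heart of the matter is the commutator: a direct computation gives a zeroth order piece $2\lambda^3(\nabla|\nabla\varphi|^2\cdot\nabla\varphi)|w|^2$, bounded below by $2C_0\beta\lambda^3|\nabla\varphi|^3|w|^2\geq c\,\lambda^3|w|^2$ through property (b) and (a), together with a first order piece $\sim\lambda\,\mathcal H(\varphi)\nabla w\cdot\nabla w$ whose possible negativity is offset by the Hessian bound (c), the remaining lower order errors being controlled by (a), (d) and (e). Choosing $\beta\geq\beta_0$ and then $\lambda\geq\lambda_0(\beta)$ large yields the interior lower bound $\langle[S,\mathcal A]w,w\rangle\gtrsim\lambda^3\int|w|^2+\lambda\int|\nabla w|^2$.

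For the boundary terms I would use $w=0$ on $\partial\mathcal O_0=\partial\mathcal W_0\times\R$, so only $\partial_\nu w$ survives and each boundary integral has the form $\sim\lambda\int_{\partial\mathcal O_0}\partial_\nu\varphi\,|\partial_\nu w|^2d\sigma$. Writing $\partial_\nu\varphi=\beta e^{\beta\psi_0}\partial_{\nu'}\psi_0$ and splitting $\partial\mathcal W_0=\Gamma_0\cup(\partial\mathcal W_0\setminus\Gamma_0)$, property (C3) forces $\partial_{\nu'}\psi_0\leq0$ on $(\partial\mathcal W_0\setminus\Gamma_0)\times\R$, so that contribution has the favourable sign and may be discarded, leaving only the $\Gamma_0\times\R$ integral on the right-hand side. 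Converting back through $\nabla w=e^{\lambda\varphi}(\nabla u+\lambda u\nabla\varphi)$ then reproduces the weighted estimate for $\Delta$, and the absorption step finishes the proof.

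The step I expect to be the main obstacle, and the one genuinely new compared with the bounded-domain arguments of \cite{So}, \cite{IY}, \cite{KU1}, is the justification of the integrations by parts in the unbounded variable $x_3$: the energy identity and the commutator manipulation repeatedly integrate by parts in $x_3$, and one must verify that the boundary contributions at $x_3=\pm\infty$ vanish. Since $\varphi$ is independent of $x_3$ this is plausible, and I would make it rigorous by a truncation argument, inserting cut-offs $\chi(x_3/n)$, carrying out the integrations by parts on the resulting compactly supported functions, and letting $n\to\infty$, using $w\in H^2(\mathcal O_0)$ to kill the commutator terms generated by $\chi'(x_3/n)$ and $\chi''(x_3/n)$. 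An alternative I would keep in reserve is a partial Fourier transform in $x_3$, reducing the principal part to a two-dimensional Carleman estimate on $\mathcal W_0$ for $\Delta'-\xi_3^2$ uniform in the parameter $\xi_3$; this decouples the principal part cleanly but turns the $x_3$-dependent lower order terms into convolutions, so I would favour the direct approach with truncation.
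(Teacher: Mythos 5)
Your proposal is correct in outline, and its first half is exactly the paper's own proof: the paper also writes $\Delta_A+q=\Delta+P_0$ with $P_0=2iA\cdot\nabla+i\,\mathrm{div}(A)-|A|^2+q$, bounds $|P_0u|\leq C(|u|+|\nabla u|)$, applies the weighted estimate \eqref{eq3} for the pure Laplacian, and absorbs the perturbation into the left-hand side for $\lambda$ large, precisely your absorption step. Where you diverge is on \eqref{eq3} itself: the paper does not prove it at all --- it cites \cite{So} --- whereas you sketch a self-contained proof by conjugation $w=e^{\lambda\varphi}u$, the splitting $L_\lambda=S+\mathcal{A}$, the commutator bound from Lemma \ref{phii}, and a truncation in $x_3$ to justify the integrations by parts. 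Your route buys self-containedness; the paper's buys brevity. Two caveats on your sketch. First, you present the $x_3$-truncation as the point that is ``genuinely new compared with the bounded-domain arguments of \cite{So}'', but \cite{So} is set on the same unbounded waveguide $\mathcal{O}_0=\mathcal{W}_0\times\R$ --- this is exactly why the paper can cite it verbatim --- so that step reproduces, rather than supplements, the cited argument. Second, the real heart of the matter, the interior lower bound $\langle[S,\mathcal{A}]w,w\rangle\gtrsim\lambda^3\|w\|_{L^2}^2+\lambda\|\nabla w\|_{L^2}^2$, is glossed: property (c) of Lemma \ref{phii} is only a one-sided bound, $\mathcal{H}(\varphi)\xi\cdot\xi\geq-C_1\beta|\nabla\varphi|\,|\xi|^2$, so the first-order commutator term is in general genuinely negative and is not ``offset'' by (c) alone; one must play the extra factor $\beta$ present in the good term of property (b) against it (this is what forces the order of quantifiers: $\beta\geq\beta_0$ first, then $\lambda\geq\lambda_0(\beta)$), and the positive $\lambda\|\nabla w\|^2$ contribution on the left is recovered a posteriori from $\|Sw\|^2$ and $\lambda^3\|w\|^2$ by a first energy estimate rather than from the commutator directly. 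This is the standard Fursikov--Imanuvilov bookkeeping carried out in \cite{So}; your outline is consistent with it, but that positivity step, not the truncation, is where the work lies.
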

\begin{proof}
For the proof, we can simply show the following inequality 
\begin{equation}\label{eq3}
\lambda \int_{\mathcal{O}_0} e^{2\lambda\varphi} \big( \lambda^2 \vert u \vert^2 + \vert \nabla u \vert^2 \big) \, dx \\
\leqslant C \Big( \int_{\mathcal{O}_0} e^{2\lambda\varphi} \vert \Delta u \vert^2 \, dx + \lambda \int_{\Gamma_0 \times \R } e^{2\lambda\varphi} \big\vert \partial_\nu u \big\vert^2  \, d\sigma_x \Big).
\end{equation}
In fact, we have
$$\Delta_A + q = \Delta + P_0 ,$$ 
with $P_0$ is a first order operator given by
$$P_0 = 2iA\cdot\nabla + i\textrm{div}(A) - A\cdot A + q.$$ 
As $\Vert A \Vert_{W^{1,\infty} (\mathcal{O}_0)} \leqslant M_1$ and $\Vert q  \Vert_{L^{\infty} (\mathcal{O}_0)} \leqslant M_2$, we get:
$$ \vert P_0 u \vert \leqslant C ( \vert u \vert + \vert \nabla u\vert ).$$
By $ (\ref{eq3})$, we get
\begin{multline*}
\lambda \int_{\mathcal{O}_0} e^{2\lambda\varphi} \big( \lambda^2 \vert u \vert^2 + \vert \nabla u \vert^2 \big) \, dx \\
\leqslant C \Big( \int_{\mathcal{O}_0} e^{2\lambda\varphi} ( \vert (\Delta_A + q) u \vert^2 + \vert P_0 u \vert^2) \, dx + \lambda \int_{\Gamma_0 \times \R} \big\vert \partial_\nu u \big\vert^2 e^{2\lambda\varphi} \, d\sigma_x \Big) \\
\leqslant C \int_{\mathcal{O}_0} e^{2\lambda\varphi} \vert (\Delta_A + q) u \vert^2 \, dx + C \int_{\mathcal{O}_0} e^{2\lambda\varphi} ( \vert u \vert^2 + \vert \nabla u\vert^2 ) \, dx + C \lambda \int_{\Gamma_0 \times \R} \big\vert \partial_\nu u \big\vert^2 e^{2\lambda\varphi} \, d\sigma_x \\
\forall \lambda \geqslant \tau_0.
\end{multline*}
Thus, we have
\begin{multline*}
\lambda^3 \big( 1-\dfrac{C}{\lambda^3} \big) \int_{\mathcal{O}_0} e^{2\lambda\varphi} \vert u \vert^2 \, dx + \lambda \big(1-\dfrac{C}{\lambda} \big) \int_{\mathcal{O}_0} e^{2\lambda\varphi} \vert \nabla u \vert^2 \, dx \\
\leqslant C \int_{\mathcal{O}_0} e^{2\lambda\varphi} \vert (\Delta_A + q)u \vert^2 \, dx + C \lambda \int_{\Gamma_0 \times \R} \big\vert \partial_\nu u \big\vert^2 e^{2\lambda\varphi} \, d\sigma_x.
\end{multline*}
Let $\lambda_0'$ such that $\forall \lambda \geqslant \lambda_0'$, $1-\dfrac{C}{\lambda^3} \geqslant \dfrac{1}{2}$ and $1-\dfrac{C}{\lambda} \geqslant \dfrac{1}{2}$. For any $ \lambda \geqslant max(\lambda_0,\lambda_0')$, we have
$$
\dfrac{1}{2}\lambda^3 \int_{\mathcal{O}_0} e^{2\lambda\varphi} \vert u \vert^2 \, dx + \dfrac{1}{2} \lambda \int_{\mathcal{O}} e^{2\lambda\varphi} \vert \nabla u \vert^2 \, dx \\
\leqslant C \int_{\mathcal{O}_0} e^{2\lambda\varphi} \vert (\Delta_A + q)u \vert^2 \, dx + C \lambda \int_{\Gamma_0 \times \R} \big\vert \partial_\nu u \big\vert^2 e^{2\lambda\varphi} \, d\sigma_x.
$$
The proof of the estimate $(\ref{eq3})$ is stated in \cite{So}. 
\end{proof}
\setcounter{equation}{0}
\section{Weak unique continuation property}
This appendix is devoted to the proof of the weak unique continuation property stated in Lemma \ref{UCP}. Let $\psi_0$ be the function defined in Lemma \ref{lempsi}.
Since $\psi_0(x')>0$ for all $x' \in \mathcal{W}_0$, there exists a constant $\kappa >0$ such that
\begin{equation} \label{eq13}
\psi_0(x') \geqslant 2 \kappa; \quad x' \in \mathcal{W}_2 \backslash \mathcal{W}_3  .
\end{equation} 
Moreover, as $\psi_0(x')=0$, $x'\in \Gamma^\sharp $, there exist $\mathcal{W}^\sharp $ a small neighborhood of $\Gamma^\sharp $ such that 
\begin{equation} \label{eq14}
\psi_0(x') \leqslant \kappa; \quad  x' \in \mathcal{W}^\sharp ,  \quad \mathcal{W}^\sharp \cap \overline{\mathcal{W}}_1 = \varnothing.
\end{equation}
Let $\overset{\sim}{\mathcal{W}}^\sharp \subset \mathcal{W}^\sharp$ be an arbitrary neighborhood of $\Gamma^\sharp $. To apply $(\ref{eq2})$, it is necessary to introduce a function $\Theta$ satisfying $0\leqslant \Theta \leqslant 1$, $\Theta \in \mathcal{C}^\infty (\R^2)$ and 
\begin{equation}\label{eq15}
\Theta(x')=\left\lbrace
\begin{array}{ll}
1 & \mbox{in $\mathcal{W}_0 \backslash \mathcal{W}^\sharp$,}\\
0 & \mbox{in $\overset{\sim}{\mathcal{W}}^\sharp$.}
\end{array}
\right.
\end{equation}
Let $w$ be a solution to $(\ref{eq11})$. Setting $$w_1(x',x_3)= \Theta(x')w(x',x_3), \quad x' \in \omega, \, x_3 \in \R,$$ 
we get
$$\left\lbrace
\begin{array}{l}
\text{$(-\Delta_{A_1 , q_1} + q_1)w_1(x) = \Theta(x)F(x)+Q_1(x,D)w \quad \quad \quad \quad $ in $ \mathcal{O}_0$, } \\
\text{$w_1=0 \quad \quad \quad  \quad \quad \,\quad \quad \quad \quad \quad \quad \quad \quad  \quad \quad \quad \quad \quad  \quad \,  $ on $ \partial \mathcal{O}_0$, } 
\end{array}\right.$$
where $Q_1(x,D)$ is a first order operator supported in $ \overline{\mathcal{W}^\sharp} \backslash \overset{\sim}{\mathcal{W}}^\sharp$ and given by
$$ Q_1(x,D)w  = -[\Delta',\Theta]w-i A_1 [\nabla,\tilde{\Theta}]w - i[\nabla,\tilde{\Theta}]A_1 w .$$ 
By applying Carleman estimate $(\ref{eq2})$ to $w_1$, we obtain
\begin{multline}\label{eq16}
\lambda \int_{\mathcal{O}_0} e^{2\lambda\varphi} \big( \lambda^2 \vert w_1 \vert^2 + \vert \nabla w_1 \vert^2 \big) \, dx \\
\leqslant C \Big( \int_{\mathcal{O}_0} e^{2\lambda\varphi} \Big( \big\vert Q_1(x,D)w \big\vert^2 + \big\vert F(x) \big\vert^2 \Big) \, dx + \lambda \int_{\Gamma_0 \times \R} \big\vert \partial_\nu w_1 \big\vert^2 e^{2\lambda\varphi} \, d\sigma_{x} \Big).
\end{multline}
Let $ \mathcal{O}^\sharp = \mathcal{W}^\sharp \times \R$ and $\overset{\sim}{\mathcal{O}}^\sharp = \overset{\sim}{\mathcal{W}}^\sharp \times \R $. Using the fact that $Q_1(x,D)$ is a first order operator supported in $ \overline{\mathcal{O}^\sharp} \backslash \overset{\sim}{\mathcal{O}}^\sharp$ and by $(\ref{eq14})$, we get
\begin{align*}
\int_{\mathcal{O}_0} e^{2\lambda\varphi} \big\vert Q_1(x,D)w \big\vert^2 \, dx & \leqslant \int_{\mathcal{O}_0} e^{2\lambda e^{\beta\psi(x)}} \big\vert Q_1(x,D)w \big\vert^2 \, dx \\
& \leqslant e^{2\lambda e^{\beta\kappa}}  \int_{\mathcal{O}^\sharp \backslash \overset{\sim}{\mathcal{O}}^\sharp} \big\vert Q_1(x,D)w \big\vert^2 \, dx \\
& \leqslant C e^{2\lambda e^{\beta\kappa}}  \int_{\mathcal{O}^\sharp \backslash \overset{\sim}{\mathcal{O}}^\sharp} \big( \vert w \vert^2 + \vert \nabla w \vert^2 \big)  \, dx.
\end{align*}
On the other hand, by using the definition of $\Theta$ given by $(\ref{eq15})$, the estimate $(\ref{eq16})$ becomes
\begin{multline*}
\lambda \int_{\mathcal{O}_0 \backslash \overset{\sim}{\mathcal{O}}^\sharp} e^{2\lambda\varphi}  \big( \lambda^2 \vert w \vert^2 + \vert \nabla w \vert^2 \big)  \, dx \leqslant  C \Big( e^{2\lambda e^{\beta\kappa}} \int_{\mathcal{O}^\sharp \backslash \overset{\sim}{\mathcal{O}}^\sharp} \big( \vert w \vert^2 + \vert \nabla w \vert^2 \big)  \, dx \\
+ \int_{\mathcal{O}_0} e^{2\lambda\varphi} \big\vert F(x) \big\vert^2 \, dx + \lambda \int_{\Gamma_0 \times \R} \big\vert \partial_{\nu} w \big\vert^2 e^{2\lambda\varphi} \, d\sigma_{x} \Big).
\end{multline*}
Moreover, by the fact that $\mathcal{O}_2 \backslash \mathcal{O}_3 \subset \mathcal{O}_0 \backslash \overset{\sim}{\mathcal{O}}^\sharp$ and by $(\ref{eq13})$, we easily obtain that
\begin{multline*}
e^{2\lambda e^{2\beta\kappa}} \lambda \int_{\mathcal{O}_2 \backslash \mathcal{O}_3 }  \big( \lambda^2 \vert w \vert^2 + \vert \nabla w \vert^2 \big)  \, dx \leqslant  C \Big( e^{2\lambda e^{\beta\kappa}} \int_{\mathcal{O}^\sharp \backslash \overset{\sim}{\mathcal{O}}^\sharp} \big( \vert w \vert^2 + \vert \nabla w \vert^2 \big)  \, dx \\
+ \int_{\mathcal{O}_0} e^{2\lambda\varphi} \big\vert F(x) \big\vert^2 \, dx + \lambda \int_{\Gamma_0 \times \R} \big\vert \partial_{\nu} w \big\vert^2 e^{2\lambda\varphi} \, d\sigma_{x} \Big).
\end{multline*}
Thus, we have
\begin{multline*}
\lambda \int_{\mathcal{O}_2 \backslash \mathcal{O}_3 }  \big( \lambda^2 \vert w \vert^2 + \vert \nabla w \vert^2 \big)  \, dx \leqslant  C \Big( e^{-2\lambda ( e^{2\beta\kappa} - e^{\beta\kappa})} \int_{\mathcal{O}^\sharp \backslash \overset{\sim}{\mathcal{O}}^\sharp} \big( \vert w \vert^2 + \vert \nabla w \vert^2 \big)  \, dx \\
+e^{2\lambda( e^{2\beta\Vert\psi_0\Vert_\infty} - e^{2\beta\kappa})} \Big( \int_{\mathcal{O}_0}  \big\vert F(x) \big\vert^2 \, dx + \lambda \int_{\Gamma_0 \times \R} \big\vert \partial_{\nu} w \big\vert^2  \, d\sigma_{x} \Big) \Big).
\end{multline*}
Let $ \alpha_1= ( e^{2\beta\kappa} - e^{\beta\kappa}) \, >0$ and $\alpha_2=( e^{2\beta\Vert\psi_0\Vert_\infty} - e^{2\beta\kappa}) \, >0$. We conclude that for any $\lambda>\lambda^*$, we have:
\begin{multline*}
\lambda \int_{\mathcal{O}_2 \backslash \mathcal{O}_3 }  \big( \lambda^2 \vert w \vert^2 + \vert \nabla w \vert^2 \big)  \, dx \leqslant  C \Big( e^{-2\lambda \alpha_1} \int_{\mathcal{O}^\sharp \backslash \overset{\sim}{\mathcal{O}}^\sharp} \big( \vert w \vert^2 + \vert \nabla w \vert^2 \big)  \, dx \\
+e^{2\lambda \alpha_2} \Big( \int_{\mathcal{O}_0}  \big\vert F(x) \big\vert^2 \, dx + \int_{\Gamma_0 \times \R} \big\vert \partial_{\nu} w \big\vert^2 \, d\sigma_{x} \Big) \Big).
\end{multline*}
Then, we have
$$ \Vert w \Vert_{H^1(\mathcal{O}_2 \backslash \mathcal{O}_3)}^2 \leqslant C \Big( e^{-2\lambda \alpha_1} \Vert w \Vert_{H^1(\Omega)}^2 + e^{2\lambda \alpha_2} \Big( \Vert F \Vert_{L^2(\mathcal{O}_0)}^2  + \big\Vert \partial_{\nu} w \big\Vert_{L^2(\Gamma_0 \times \R)}^2 \Big) \Big) $$
which completes the demonstration.

\section*{Acknowledgments}

 This work was partially supported by  the French National
Research Agency ANR (project MultiOnde) grant ANR-17-CE40-0029.

\end{document}